\documentclass[a4paper]{amsart}
\usepackage{amssymb} 
\usepackage[utf8]{inputenc}
\usepackage[T1]{fontenc} 
\usepackage{lmodern}
\usepackage[final]{microtype} 
\usepackage[hidelinks,pagebackref]{hyperref} 
   \hypersetup{final} 

\usepackage{todonotes}

\usepackage{mathtools} 
\usepackage[shortlabels]{enumitem}

\usepackage{tikz-cd} 
\usetikzlibrary{decorations.markings} 
\tikzset{negated/.style={
        decoration={markings,
            mark= at position 0.5 with {
                \node[transform shape] (tempnode) {$\backslash$};
            }
        },
        postaction={decorate}
    }
}

\usepackage{newunicodechar} 
\newunicodechar{ﬁ}{fi}
\newunicodechar{ﬀ}{ff}

\newtheorem{theorem}{Theorem}[section]
\newtheorem{lemma}[theorem]{Lemma}
\newtheorem{corollary}[theorem]{Corollary} 
\newtheorem{proposition}[theorem]{Proposition} 

\theoremstyle{definition}

\theoremstyle{remark}
\newtheorem{remark}[theorem]{Remark}

\numberwithin{equation}{section}

\newcommand*{\R}{\mathbb{R}}
\newcommand*{\N}{\mathbb{N}}

\newcommand{\dd}{\mathrm{d}}

\DeclareMathOperator{\E}{\mathbb{E}} 
\newcommand*{\p}{\mathbb{P}} 

\DeclareMathOperator{\Ent}{Ent}	
\DeclareMathOperator{\Var}{Var}	

\makeatletter
\newcommand{\addresseshere}{%
  \enddoc@text\let\enddoc@text\relax
}
\makeatother

\title[Thinning]{Thinning Operation via the Poisson-F\"ollmer Process }

\author[I. Kavvadias]{Ioannis Kavvadias}
\address{University of Warsaw, Institute of Mathematics, Banacha 2, 02--097 Warsaw, Poland.}
\email{i.kavvadias@mimuw.edu.pl}

\date{\today}

\makeatletter
\@namedef{subjclassname@2020}{\textup{2020} Mathematics Subject Classification}
\makeatother
\subjclass[2020]{Primary 60E15; 
 }

\keywords{Poisson-F\"ollmer Process, thinning operation}

\begin{document}

\maketitle

ABSTRACT. We give an alternative proof of Yu's Thinning Lemma and of the Law of Thin Numbers using a stochastic variational formula for the relative entropy. We show that this approach yields new convergence rates for the Law of Thin Numbers, complementing and extending previously known results.

\section{Introduction}

\subsection{Thinning}

The thinning operation was introduced by R\'enyi in \cite{R}, who used it to provide an alternative characterization of Poisson measures. R\'enyi’s thinning operation on a discrete random variable is a natural discrete analog of the scaling operation for continuous random variables. 
\newline

\textbf{Definition}: Given $a \in [0,1]$ and a discrete random variable $X$ taking values in $\N,$ the $a$--$thinning$ of $X$ is defined as
\begin{displaymath}
    T_{a}(X) := \sum_{i=1}^{X} B_{i}
\end{displaymath}
where $ (B_{i})_{i \ge 1} $ are independent and identically distributed Bernoulli random variables with parameter $a,$ that is $\p (B_{i} =1 ) = 1 - \p(B_{i} = 0) = a,$ independent of $X.$
\newline

We denote natural numbers by $\N := \{0, 1, . . .\}$ and real numbers by $\R.$ Moreover, whenever $X$ is an $\N$-valued random variable with law $\mu,$ we write $T_{a}(\mu) = T_{a}(X).$ For $T > 0,$ let $\pi_{T}$ denote the Poisson measure on $\N$ with parameter $T$, namely
\begin{displaymath}
    \pi_{T}(k) := e^{-T} \frac{T^{k}}{k !}, \quad \forall k \in \N,
\end{displaymath}
and by $\pi_{0}$ the point-mass at 0. One of the fundamental properties of the thinning operation is that it preserves a broad class of discrete random variables, among them the Poisson distribution. More precisely,
\begin{equation}
\label{eq:thinn_poiss}
       T_{a}(\pi_{T}) = \pi_{a T}.
\end{equation}
A classical result associated with thinning is the \textit{Law of Thin Numbers} (see Theorem \ref{thm:law}), introduced by Harremo\"es et al.\ in \cite{HJK1}, which may be viewed as a generalization of Poisson’s theorem. Roughly speaking, the theorem states that suitable thinning and convolution procedures yield convergence to the Poisson measure. Beyond the convergence itself, it is natural to seek quantitative versions of this phenomenon, measuring the rate at which convergence occurs. The study of such convergence rates has turned out to be closely related to modified log-Sobolev inequalities for the Poisson measure.

\subsection{Modified log-Sobolev Inequalities}
Poisson measures $\pi_{T}$ on $\N$ which are the discrete analogues of Gaussian measures, do not satisfy classical log-Sobolev inequalities. Rather, they satisfy modified log-Sobolev inequalities, introduced by Bobkov and Ledoux in \cite{BL} and the sharpest form of these
inequalities is due to Wu in \cite{W}. Let $f : \N \rightarrow (0, \infty)$ be an $\pi_{T}-$integrable function and define
\begin{displaymath}
    \Ent_{\pi_{T}} (f) := \sum_{k=0}^{\infty} f(k) \log f(k) \pi_{T}(k) - \Bigg( \sum_{k=0}^{\infty} f(k) \pi_{T}(k) \Bigg) \log \Bigg( \sum_{k=0}^{\infty} f(k) \pi_{T}(k) \Bigg).
\end{displaymath}
The modified log-Sobolev inequalities due to \cite{BL} and \cite{W} take the form
\begin{equation}
\label{eq:wu}
\Ent_{\pi_{T}}(f) \le T \E_{\pi_{T}} \Big[ \Psi(f,Df) \Big] \le
T \E_{\pi_{T}} \Bigg( \frac{(Df)^{2}}{f} \Bigg).
\end{equation}
where
\begin{displaymath}
    Df(k):= f(k+1) - f(k), \quad k \in \N
\end{displaymath}
is the discrete derivative of $f$ and 
\begin{equation}
    \label{eq:Psi}
    \Psi(u,v):=(u+v) \log(u+v) - u \log(u) - (\log(u)+1)v
\end{equation}
for all $u>0$ and $u+v>0.$ The second inequality in \eqref{eq:wu} is a consequence of the calculus estimate $\Psi(u,v) \le \frac{v^{2}}{u}.$ 
\newline

Recently, Aryan, Rivera and Shenfeld in \cite{ARS} employed a stochastic variational approach developed in \cite{KL} to provide an alternative proof of Wu’s inequality the same way Lehec proved Gross's inequality in \cite{L}. Moreover, they used this framework to obtain stability estimates for Wu’s inequality, in analogy with the corresponding program carried out in \cite{ELS}. In the Gaussian setting, there is a beautiful proof of the classical Shannon's inequality due to Lehec \cite{L}, who showed how to deduce the inequality from a stochastic representation formula for the relative entropy with respect to the Gaussian measure. The present work is motivated by this proof. Finally, we mention the works \cite{EM},\cite{EMZ}, who exploit this stochastic representation approach for the relative entropy with respect to the Gaussian measure to derive stability estimates for Shannon's inequality and convergence rates in the entropic Central Limit Theorem.

\subsection{Results}

Since our perspective is primarily information-theoretic, we introduce the following information-theoretic notation: Let $f : \mathbb{N} \to [0,\infty)$ satisfy $ \int_{\N} f \dd \pi_{T}=1 $ and $\mu := f \pi_T.$ Define
\begin{displaymath}
    H(\mu|\pi_{T}) := \sum_{k=0}^{\infty} f(k) \log \big( f(k) \big) \pi_{T}(k) = \int_{\N} f \log(f) \dd \pi_{T}
\end{displaymath}
to be the \textit{relative entropy} of a positive measure $\mu$ on $\N$ with respect to $\pi_{T}.$ We interpret $0 \log (0)$ by continuity, namely as 0. Note that 
\begin{displaymath}
    H(\mu | \pi_{T}) = \Ent_{\pi_{T}} (f).
\end{displaymath}
We will use the notation $H(X|\pi_{T}) = H(\mu | \pi_{T} )$ when $X$ is a discrete random variable taking values in $\N$ with $X \sim \mu .$ Our starting point is the following theorem of Yu. 
\begin{theorem} [Yu \cite{Y}]
    \label{thm:Yu}
    Let $X$ be a random variable with nonnegative density $f$ with respect to $\pi_{T}.$ Then, for all $a \in$ $[0,1],$ we have 
\begin{displaymath}
    H(T_{a}(X)|\pi_{aT}) \le a H(X|\pi_{T}).
\end{displaymath}
\end{theorem}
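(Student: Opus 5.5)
The plan is to derive the inequality from the stochastic variational (Girsanov-type) representation of relative entropy with respect to the Poisson measure, in the spirit of Lehec's proof of Shannon's inequality. Write $\varphi(x) := x\log x - x + 1 \ge 0$. The representation I would use has two parts, both following \cite{KL} and \cite{ARS}.

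\emph{Upper bound.} Let $(N_t)_{t\in[0,T]}$ be a counting process with $N_0=0$, having a predictable intensity $(v_t)$ with respect to some filtration $(\mathcal F_t)$, which may be larger than the natural filtration of $N$. Then
\begin{displaymath}
H(\mathrm{Law}(N_T)\,|\,\pi_T) \le \E \int_0^T \varphi(v_t)\,\dd t .
\end{displaymath}
This follows from Girsanov's theorem for point processes: the right-hand side is the relative entropy of the law of the whole process (together with any extra randomness) with respect to the law of a standard Poisson process of unit rate. One then applies the data processing inequality to the map $N \mapsto N_T$, under which the reference law is pushed forward to $\pi_T$.

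\emph{Equality case.} For $\mu = f\pi_T$, the Poisson--F\"ollmer process attains equality. Its intensity is
\begin{displaymath}
v_t = \frac{P_{T-t}f(N_t+1)}{P_{T-t}f(N_t)},
\end{displaymath}
where $P_s$ is the Poisson semigroup, and it satisfies $N_T \sim \mu$ and $\E\int_0^T\varphi(v_t)\,\dd t = H(\mu|\pi_T)$.

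Now fix $a\in(0,1]$; the case $a=0$ is trivial since $T_0(X)=0$ and $\pi_0=\delta_0$.

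\textbf{Step 1: thin the F\"ollmer process.} Take the Poisson--F\"ollmer process $N$ for $X$. Mark each of its jumps independently with a Bernoulli($a$) coin, the coins being independent of $N$. Let $M_t$ count the marked jumps up to time $t$. Since $N_T \sim X$ and the marks are i.i.d.\ and independent of $N$, we get $M_T \sim T_a(X)$.

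\textbf{Step 2: compute the intensity of $M$.} Work in the filtration generated by $N$ and the marks of the jumps already seen. In this filtration $M$ has intensity $a v_t$, because a jump of $N$ at rate $v_t$ is kept with probability $a$, independently of the past.

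\textbf{Step 3: change time.} Set $\widetilde M_s := M_{s/a}$ for $s\in[0,aT]$. Then $\widetilde M_{aT} = M_T \sim T_a(X)$, and $\widetilde M$ has intensity
\begin{displaymath}
\tilde v_s = \frac{1}{a}\cdot a\, v_{s/a} = v_{s/a}
\end{displaymath}
with respect to the time-changed filtration.

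\textbf{Step 4: apply the upper bound on $[0,aT]$.} The upper bound, with reference measure $\pi_{aT}$, gives
\begin{displaymath}
H(T_a(X)|\pi_{aT}) \le \E\int_0^{aT}\varphi(v_{s/a})\,\dd s = a\,\E\int_0^T \varphi(v_t)\,\dd t = a\,H(X|\pi_T),
\end{displaymath}
where the first equality is the substitution $t = s/a$ and the last equality is the equality case for the F\"ollmer process.

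\textbf{Main obstacle.} The substantive step is justifying the upper bound when the intensity is predictable only with respect to an enlarged filtration, here carrying the coin marks. The intensity $a v_t$ is generally not adapted to $M$'s own filtration. I would handle this by running Girsanov on the full probability space carrying $N$ and the marks, and then applying data processing twice: once to pass to the marginal of $\widetilde M$, and once more to pass to $\widetilde M_{aT}$.

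A second, technical issue is integrability and well-posedness of the F\"ollmer intensity when $f$ has zeros or $H(X|\pi_T)=\infty$; the latter case is trivial. To handle zeros of $f$, I would first prove the result for $f$ bounded and bounded away from zero on a finite support, using a stopping/localization argument. I would then pass to general $f$ by approximation, using lower semicontinuity of relative entropy on the left-hand side and monotone convergence on the right.
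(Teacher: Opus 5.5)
Your proof is correct, and it takes a genuinely different route from the paper.

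\textbf{The paper's route.} Proposition~\ref{prosotion:3.3} uses Lemma~\ref{lem:law}, the semigroup property and the uniqueness in Lemma~\ref{lem:uniq}. With these it shows that the Poisson--F\"ollmer intensity of $T_a(X)$ relative to $\pi_{aT}$ is exactly the restriction of $\lambda$ to $[0,aT]$. So $H(T_a(X)|\pi_{aT})=a\,\E\int_0^T\phi(\lambda_{as})\,\dd s$ holds as an identity. The inequality then comes from the martingale property of $\lambda$ (Lemma~\ref{lem:martingale}) and convexity of $\phi$, which make $t\mapsto\E\phi(\lambda_t)$ nondecreasing.

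\textbf{Your route.} You never identify the optimal intensity of $T_a(X)$. You thin and time-change the F\"ollmer process of $X$ to get a generally suboptimal admissible intensity $\lambda_{s/a}$ on $[0,aT]$. You then use only the upper-bound half of the variational formula, which is the exact analogue of Lehec's proof of Shannon's inequality. This is more robust: it needs neither the martingale property nor uniqueness of the fixed point. With external independent marks it also works for any admissible control of $X$, not just the optimal one.

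\textbf{What the paper's route buys.} It gives an exact formula for the deficit, $\delta_{\mathrm{Thinn}}(f)=a\int_0^T\int_{at}^t\partial_s\E\phi(\lambda_s)\,\dd s\,\dd t$. This drives Theorem~\ref{thm:main1.2}, the derivative and convexity statements of Lemma~\ref{lem:derivative}, and the Fisher-information results. Your coupling says nothing quantitative about the gap.

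\textbf{Avoiding the enlarged filtration.} For the F\"ollmer intensity specifically, you can stay inside the paper's framework. On the same point process $N$, use the bounded predictable intensity $a\lambda_s$. Since $\lambda_s=G(s,X_s)$ depends only on the past of $X$, consider two counting processes: points below $a\lambda_s$, and points between $a\lambda_s$ and $\lambda_s$. They have intensities $(a\lambda_s,(1-a)\lambda_s)$, which are functions of $s$ and the current value of their sum. That determines their joint law, so the points below $a\lambda_s$ form an independent Bernoulli($a$) thinning of the F\"ollmer jumps. The Lebesgue-measure-preserving map $(s,x)\mapsto(as,x/a)$ then turns this into a counting process on $[0,aT]$ with intensity $\lambda_{s/a}$. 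That process is admissible in \eqref{eq:variational}, so no separate Girsanov argument is needed. (For general controls that depend on point heights, this in-PPP trick fails, and your external marks are the right tool.)

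\textbf{Reduction step.} Finitely supported densities are awkward here. The intensity vanishes at the top of the support, and it blows up near $t=T$ if there are interior zeros, which is outside the setting of Lemma~\ref{lem:uniq}. The paper's truncation $f_k=\min\{k,\max\{f,1/k\}\}$ keeps the intensity bounded above and away from zero on all of $\N$, so no localization is needed.
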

The above is called the Thinning Lemma and plays a central role in many information theoretic inequalities for discrete random variables (see \cite{J}).
\newline

We provide a new proof of the above inequality as well as an explicit variational formula for the relative entropy on the left hand side. As a by-product of our proof, we will obtain an improvement of the Thinning Lemma under convexity assumptions, namely under the assumption that $f$ is \textit{ultra log-concave}. Recall that a positive function $f: \N \rightarrow (0, \infty)$ is \textbf{ultra log-concave} if
\begin{displaymath}
    f(k)^{2} \ge \frac{k+1}{k}f (k + 1)f (k - 1), \quad \forall k \ge 1.
\end{displaymath}
Denote by 
\begin{displaymath}
    \delta_{\mathrm{Thinn}} (f) := a H(X|\pi_{T}) - H(T_{a}(X)| \pi_{aT} )
\end{displaymath}
where $f$ is the density function of $X$ with respect to $\pi_{T},$ that is $X \sim f \dd \pi_{T}.$ We will frequently work with the function $\phi \colon [0,\infty)\to [0,\infty)$  given by 
\begin{equation}
    \label{eq:phi}
    \phi(x) = x\log x - x + 1.
\end{equation} 
Our first result is the following theorem.
\begin{theorem}
\label{thm:main1.2}
     Fix $T>0.$ Let $X$ be a random variable whose density $f : \N \rightarrow (0, \infty) $ with respect to $\pi_{T}$ is ultra log-concave, then we have
    \begin{displaymath}
        \delta_{\mathrm{Thinn}}(f) \ge a(1-a) \frac{T^{2}}{2} \Theta_{\frac{f(0)}{f(1)}} \Bigg( \frac{\E X}{T} \Bigg)
    \end{displaymath}
    where, for $c>0,$ we denote
    \begin{displaymath}
        \Theta_{c}(z):= \frac{z^{2}}{1+cz} \log \Big( \frac{1}{1+cz} \Big) - \frac{z^{2}}{1+cz} + z^{2}= z^{2} \phi \Bigg( \frac{1}{1 + cz} \Bigg), \quad z\ge 0.
    \end{displaymath}
\end{theorem}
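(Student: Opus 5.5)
We will work with the stochastic variational (Föllmer-type) representation of relative entropy with respect to $\pi_T$ developed in \cite{KL} and used in \cite{ARS}. Let $N$ be a unit-rate Poisson point process on $[0,T]\times\R_+$. For a predictable intensity $(\lambda_t)_{t\in[0,T]}$, let $X^\lambda_t$ be the associated counting process. Then
\begin{displaymath}
H(\mu|\pi_T)=\inf\Big\{\E\int_0^T \phi(\lambda_t)\,\dd t \;:\; X^\lambda_T\sim\mu\Big\},
\end{displaymath}
with $\phi$ as in \eqref{eq:phi}. The infimum is attained by the Poisson--Föllmer process, whose intensity is
\begin{displaymath}
\lambda_t=\frac{P_{T-t}f(X_t+1)}{P_{T-t}f(X_t)},\qquad P_sf(k)=\E f(k+\mathrm{Poi}(s)).
\end{displaymath}

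The thinning step is a coupling. Thin each jump of the optimal process $X_t$ independently with probability $a$. The thinned process counts points of an intensity-$a\lambda_t$ process, so at time $T$ it has law $T_a(\mu)$. Next reparametrize time by $s=at$, so that the horizon becomes $aT$ and the intensity becomes $\lambda_{s/a}$ with respect to a reference Poisson process of rate one on $[0,aT]$. Feeding this into the variational formula gives
\begin{displaymath}
H(T_a(X)|\pi_{aT})\le \E\int_0^{aT}\phi(\tilde\lambda_s)\,\dd s .
\end{displaymath}
Here $\tilde\lambda_s=\E[\lambda_{s/a}\mid\mathcal G_s]$ is the projection onto the filtration $\mathcal G_s$ generated by the thinned process, since the optimal intensity for the thinned law is the optional projection. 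By Jensen ($\phi$ convex) and the substitution $s=at$, this is at most $a\,\E\int_0^T\phi(\lambda_t)\,\dd t=aH(X|\pi_T)$, which recovers Theorem \ref{thm:Yu}.

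To get a quantitative gap we keep the Jensen defect. We use the elementary convex-combination identity with its remainder: the thinned intensity is an average of $\lambda$ and $1$ with weights $a$ and $1-a$, so
\begin{displaymath}
a\phi(\lambda)+(1-a)\phi(1)-\phi(a\lambda+(1-a))\ge a(1-a)\,\kappa(\lambda)(\lambda-1)^2.
\end{displaymath}
This reflects the alternative, better bookkeeping in which the thinning is written through the process of rejected points. The modulus $\kappa$ comes from $\phi''(x)=1/x$, for which the natural lower bound is of order $1/\max(\lambda,1)$. Carrying this through yields
\begin{displaymath}
\delta_{\mathrm{Thinn}}(f)\ge a(1-a)\,\E\int_0^T g(\lambda_t)\,\dd t,
\end{displaymath}
where $g$ is a convex function built from $\phi$. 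I expect this is where a factor like $\tfrac12\phi$-type structure appears. The precise identity to aim for is $\delta\ge a(1-a)\E\int_0^T(T-t)\cdot(\text{second-order term})\,\dd t$, which would explain the factor $T^2/2=\int_0^T(T-t)\,\dd t$.

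The main obstacle is lower-bounding this integrand uniformly using ultra log-concavity. Ultra log-concavity of $f$ w.r.t.\ $\pi_T$ is exactly the statement that $k\mapsto f(k+1)/f(k)$ is nonincreasing. This property is preserved by $P_s$, since Poisson convolution preserves ultra log-concavity. Hence $\lambda_t\le P_{T-t}f(1)/P_{T-t}f(0)$, and, more usefully, the ratio $f(0)/f(1)=:c$ controls how small $1/\lambda$ can be. I will try to show that the relevant quantity, essentially a discrete derivative $\lambda_t(X_t+1)-\lambda_t(X_t)$ or $\lambda_t-1$, satisfies a bound whose expectation is tied to $\E X/T$. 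Indeed, by the martingale property, $\E\lambda_t=\E X/T$ is constant in $t$, with $\lambda_t$ a martingale. The desired form $z^2\phi(1/(1+cz))$ with $z=\E X/T$ then suggests the following route. Apply Jensen once more, since the map $z\mapsto z^2\phi(1/(1+cz))$ is what arises after optimizing the pointwise bound under the constraint $\lambda\le$ (something in terms of $c$). Then pass the expectation inside using $\E\lambda_t=z$. Verifying the needed convexity/monotonicity of $\Theta_c$ and that the pointwise bound has exactly this shape is the step I expect to require the most care.
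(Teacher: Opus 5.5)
There is a genuine gap at the step that carries all the content of the theorem.

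\textbf{The slack in your coupling is never quantified.} Your coupling argument for Yu's inequality is fine in outline. In fact the thinned, time-changed path has exactly the law of the Poisson--F\"ollmer process of $T_a(X)$, so your first inequality is an equality. But then the only slack is the conditional Jensen step $\phi(\E[\lambda_{s/a}\mid\mathcal G_s])\le\E[\phi(\lambda_{s/a})\mid\mathcal G_s]$. Its total defect is exactly $\delta_{\mathrm{Thinn}}(f)$, and you do not quantify it.

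\textbf{The convex-combination remainder does not apply.} Nowhere in your construction is the intensity an average $a\lambda+(1-a)$. That is the intensity of $T_a(X)+\mathrm{Poi}((1-a)T)$ on $[0,T]$. By data processing, its entropy relative to $\pi_T$ is at most $H(T_a(X)|\pi_{aT})$, so an upper bound on it gives no lower bound on the deficit. Suppose you nevertheless had $\delta_{\mathrm{Thinn}}(f)\ge a(1-a)\E\int_0^T g(\lambda_t)\,\dd t$ with $g(\lambda)\asymp(\lambda-1)^2/\max(\lambda,1)$. Jensen would then give a multiple of $g(\E X/T)$, which vanishes when $\E X=T$, whereas $\Theta_c(1)=\phi(1/(1+c))>0$.

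\textbf{You misread the hypothesis.} Ultra log-concavity of $f$ here means $f(k)^2\ge\frac{k+1}{k}f(k+1)f(k-1)$, i.e.\ $k\mapsto(k+1)f(k+1)/f(k)$ is nonincreasing. This is strictly stronger than monotonicity of $f(k+1)/f(k)$, and under your weaker condition the theorem is false. Take $X\sim\pi_\theta$: the density $f(k)=e^{T-\theta}(\theta/T)^k$ has constant ratio, and Yu's inequality is an equality, so $\delta_{\mathrm{Thinn}}(f)=0$. Yet $\Theta_{T/\theta}(\theta/T)=(\theta/T)^2\phi(1/2)>0$. So no argument that relies only on bounds such as $\lambda_t\le P_{T-t}f(1)/P_{T-t}f(0)$ can work.

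\textbf{The missing ingredients} are the following three steps used in the paper.
\begin{enumerate}
\item[(i)] No coupling is needed. The F\"ollmer process of $X$ has $X_{aT}\sim P_{T(1-a)}f\,\dd\pi_{aT}$, the law of $T_a(X)$. By the semigroup property, its intensity on $[0,aT]$ is the optimal one for $T_a(X)$. Hence $H(T_a(X)|\pi_{aT})=\E\int_0^{aT}\phi(\lambda_t)\,\dd t$ exactly, and $\delta_{\mathrm{Thinn}}(f)=a\int_0^T\int_{at}^t\partial_s\E\phi(\lambda_s)\,\dd s\,\dd t$.
\item[(ii)] It\^o's formula for the jump process, using $\partial_tG=-G\,DG$ and $\phi(y)-\phi(x)-\phi'(x)(y-x)=x\phi(y/x)$, gives $\partial_s\E\phi(\lambda_s)=\E\big[\lambda_s^2\,\phi\big(e^{D^2\log P_{T-s}f(X_s)}\big)\big]$. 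The dissipation is governed by the ratio of the intensity just after and just before a jump, not by $\lambda_s-1$.
\item[(iii)] The Aryan--L\'opez-Rivera--Shenfeld estimate $e^{D^2\log P_{T-s}f(X_s)}\le(1+\tfrac{f(0)}{f(1)}\lambda_s)^{-1}<1$ is exactly where the strong ultra log-concavity enters. Since $\phi$ decreases on $(0,1]$, it bounds the integrand below by $\E\,\Theta_{f(0)/f(1)}(\lambda_s)$.
\end{enumerate}
Only at this point does your last step apply: $\Theta_c$ is convex and $\E\lambda_s=\E X/T$. The constant $T^2/2$ comes from the inner intervals $[at,t]$, of length $(1-a)t$, not from $\int_0^T(T-t)\,\dd t$.
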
 

We now introduce the \textit{Fisher information}. Let $f:\N \rightarrow [0,\infty)$ and $ \mu := f \pi_{T}.$ Define
\begin{equation}
    \label{eq:FI}
    I(\mu | \pi_{T}):= T\sum_{k=0}^{\infty} \frac{(Df(k))^{2}}{f(k)} \pi_{T}(k) = T \int_{\N} \frac{(Df)^{2}}{f} \dd \pi_{T}.
\end{equation}
Throughout, we adopt the convention $\frac{0}{0}=0.$ As before, we will use the notation $I(X | \pi_{T}) = I (\mu | \pi_{T})$ when $X$ is a discrete random variable with $X \sim \mu.$ 

\begin{remark}
\label{rem:definitions}
Let us compare our definition of Fisher Infromation \eqref{eq:FI} with the one appearing in the information-theoretic literature (see \cite{J},\cite{HJK3}). The \textit{scaled score function} of a discrete random variable taking values in $\N$ with $T=\E X$ and $X \sim \rho$ is defined as
\begin{displaymath}
    \rho_{X} (k) : = \frac{(k+1) \rho (k + 1) }{T \rho(k) } - 1, \quad \forall k\in \N
\end{displaymath}
and the \textit{scaled Fisher Information} is defined as
\begin{equation}
    \label{eq:scaledFI} 
    K(X) := T \E \big[ \rho_{X}(X)^{2} \big].
\end{equation}
By working with density functions, when $X \sim f \dd \pi_{T}$ and $T=\E X,$ an easy calculation shows that the definitions \eqref{eq:FI},\eqref{eq:scaledFI} coincide
\begin{displaymath}
    K \big( X \big) = I \big( X |\pi_{T} \big).
\end{displaymath}   
However, we choose to work with \eqref{eq:FI}, as several of the results discussed below do not require the assumption $T=\E X.$
\end{remark}
Motivated by the Law of Thin Numbers, we next investigate the rate at which $I(T_{a}(X) | \pi_{aT})$ tends to 0 when $a \rightarrow 0^{+}.$ Such bounds have already been studied in \cite{HJK2}, among other references. In particular, Harrem\"oes, Johnson and Kontoyiannis show that, if $\Var(X)$ is finite and $T = \E X$, then, for $n \ge 2$
\begin{equation}
    \label{eq:general_conc}
    H \Big( \sum_{i=1}^{n} T_{1/n}(X_{i}) |\pi_{T} \Big) \le \frac{ \Var(X)}{nT} + \frac{1}{n^{2}},
\end{equation}
where $\Var(X)$ denotes the variance of $X;$ see \cite[Proposition 19]{HJK2} and, in particular, \cite[Formula 30]{HJK2}. The authors then restrict their attention to the class of \textit{ultra log-concave} measures. To introduce this notion, we first recall that a positive function $f:\N \rightarrow (0,\infty)$ is \textbf{log-concave} if
\begin{displaymath}
    f(k)^{2} \ge f (k + 1)f (k - 1) \quad \forall k \ge 1.
\end{displaymath}
Harrem\"oes, Johnson and Kontoyiannis show that if $X$ has \textit{ultra log-concave} distribution, that is
$X \sim f \dd \pi_{T}$ with $f$ positive and log-concave, then the following estimate holds true 
\begin{equation}
\label{eq:log_conc}
     K( T_{a}(X)  ) \le a^{2} K( X ) 
\end{equation}
$\forall a\in [0,1].$ Our first estimate in that direction complements the above result in the general case.
\begin{theorem}
    \label{thm:FIsc}
    Let $X$ be a random variable with nonnegative density $f$ with respect to $\pi_{T}.$ Then, for all $a \in$ $[0,1],$ we have 
\begin{displaymath}
    I( T_{a}(X) | \pi_{aT} ) \le a I(X| \pi_{T}).
\end{displaymath}
\end{theorem}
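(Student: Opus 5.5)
The approach is to write the density of $T_a(X)$ with respect to $\pi_{aT}$ explicitly as a Poisson average of $f$. Once that is done, the inequality reduces to joint convexity of $(u,v)\mapsto v^2/u$ together with the Poisson convolution identity $\pi_{aT}*\pi_{(1-a)T}=\pi_T$. The cases $a=0$ and $a=1$ are trivial: for $a=0$ both $T_0(X)=0$ and $\pi_0$ are the point mass at $0$, so the left side is $0$. We therefore assume $a\in(0,1)$.

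\textbf{Step 1: the density of $T_a(X)$.} Write $g$ for the density of $T_a(X)$ with respect to $\pi_{aT}$. Expanding the binomial thinning gives
\begin{align*}
\p(T_a(X)=j)&=\sum_{k\ge j} f(k)\pi_T(k)\binom{k}{j}a^j(1-a)^{k-j}\\
&=\sum_{m\ge 0} f(j+m)\,e^{-T}\frac{(aT)^j}{j!}\frac{((1-a)T)^m}{m!}\\
&=\pi_{aT}(j)\sum_{m\ge0} f(j+m)\pi_{(1-a)T}(m).
\end{align*}
Hence $g(j)=\E[f(j+Z)]$ with $Z\sim\pi_{(1-a)T}$. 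This is the discrete Mehler-type representation, and it generalizes \eqref{eq:thinn_poiss}. Since the shift commutes with the expectation, we also get $Dg(j)=\E[Df(j+Z)]$.

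\textbf{Step 2: pointwise convexity.} The map $(u,v)\mapsto v^2/u$ on $\{u>0\}$ is jointly convex; it is the perspective of $v\mapsto v^2$. Jensen's inequality, or equivalently Cauchy–Schwarz in the form $(\E[V])^2\le \E[V^2/U]\,\E[U]$ with $U=f(j+Z)$ and $V=Df(j+Z)$, then gives
$$\frac{(Dg(j))^2}{g(j)}\le \E\Big[\frac{(Df)^2}{f}(j+Z)\Big]\qquad\text{for every } j\in\N.$$
Points where $f$ vanishes need some care under the convention $0/0=0$. If $f(k)=0$ but $f(k+1)>0$, the integrand $(Df)^2/f$ is $+\infty$ at $k$. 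In that case $I(X|\pi_T)=\infty$ and there is nothing to prove. Otherwise the Cauchy–Schwarz form holds term by term, and if $g(j)=0$ then $Dg(j)=0$.

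\textbf{Step 3: integrate.} Multiply the pointwise bound by $aT\,\pi_{aT}(j)$ and sum over $j$. Let $Y\sim\pi_{aT}$ be independent of $Z$. Then
$$I(T_a(X)|\pi_{aT})\le aT\,\E\Big[\frac{(Df)^2}{f}(Y+Z)\Big]=aT\,\E_{\pi_T}\Big[\frac{(Df)^2}{f}\Big]=a\,I(X|\pi_T),$$
because $Y+Z\sim\pi_T$. Since all terms are nonnegative, Tonelli's theorem justifies exchanging the sums.

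The only real obstacle is Step 1, namely obtaining the representation $g=\E f(\cdot+Z)$. After that the argument is a one-line convexity bound. The boundary conventions in Step 2 are a minor technical point.
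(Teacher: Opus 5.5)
Your proof is correct, and it takes a genuinely different, more elementary route than the paper.

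The paper first restricts to $f$ bounded away from zero and infinity, so that the Poisson--F\"ollmer process with intensity \eqref{eq:minimizer} exists. It then writes $I(T_a(X)\mid\pi_{aT})=aT\,\E(\lambda_{aT}-1)^2$ and $I(X\mid\pi_T)=T\,\E(\lambda_T-1)^2$, and concludes from the martingale property of $(\lambda_t)$. The general case is recovered through the truncations $f_k$, using dominated convergence on the right-hand side and Fatou's lemma on the left.

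You bypass the process entirely. From $g=P_{(1-a)T}f$ (Lemma \ref{lem:law}) you obtain the pointwise bound $(Dg)^2/g\le P_{(1-a)T}\big((Df)^2/f\big)$ by joint convexity of $(u,v)\mapsto v^2/u$. You then integrate using $\pi_{aT}*\pi_{(1-a)T}=\pi_T$.

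The two arguments are the same convexity inequality in different clothing. After conditioning on $X_{aT}=j$, the submartingale inequality $\E[(\lambda_T-1)^2\mid\mathcal F_{aT}]\ge(\lambda_{aT}-1)^2$ is exactly your Cauchy--Schwarz step. The paper itself uses your pointwise bound in the equality-case section, where it serves to obtain uniform integrability.

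Your version buys generality and brevity. It needs no regularity of $f$ and no approximation argument, and zeros of $f$ are handled by the conventions you describe. One small point is worth stating explicitly: $g(j)\le 1/\pi_{aT}(j)<\infty$, so $Dg(j)=\E[Df(j+Z)]$ is well defined.

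What the paper's route buys is a time-integral formula for the deficit, obtained from Lemma \ref{lem:Ito}:
$$\delta_{\mathrm{FI}}(f)=aT\int_{aT}^{T}\E\big[\lambda_s\,(DG(s,X_s))^2\big]\,\dd s.$$
This formula feeds the ultra log-concave stability bound of Theorem \ref{thm:main1.4} and the asymptotics of Proposition \ref{prop:asymptotic}. Your one-shot Jensen argument gives the inequality but no such quantitative handle on the gap.
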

As before, we introduce the following deficit
\begin{displaymath}
     \label{thm:FIstab}
    \delta_{\mathrm{FI}}(f) :=  a I( X | \pi_{T} ) - I(T_{a}(X) | \pi_{aT})
\end{displaymath}
and exploit stability estimates of the above inequality under a convexity assumption on the density function $f.$
\begin{theorem}
\label{thm:main1.4}
     Fix $T>0.$ Let $X$ be a random variable whose density $f : \N \rightarrow (0, \infty) $ with respect to $\pi_{T}$ is ultra log-concave, then we have
\begin{displaymath}
    \delta_{\mathrm{FI}}(f) \ge a(1-a)T^{2} \Delta_{\frac{f(0)}{f(1)}} \Bigg( \frac{\E X}{T} \Bigg)
\end{displaymath}
where, for $c>0,$ we denote
\begin{displaymath}
    \Delta_{c}(z):= z^{5} \Big( \frac{c}{1 + cz} \Big)^{2}  , \quad z\ge 0.
\end{displaymath}
\end{theorem}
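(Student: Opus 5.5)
The plan is to represent thinning through the Poisson--F\"ollmer process. Let $P_s g(k) := \E\, g(k+Z_s)$ with $Z_s\sim\pi_s$ be the Poisson semigroup. The density of $T_a(X)$ with respect to $\pi_{aT}$ is $P_{(1-a)T}f$. Let $(N_t)_{t\in[0,T]}$ be the counting process with stochastic intensity
\begin{displaymath}
\lambda_t := \frac{P_{T-t}f(N_{t-}+1)}{P_{T-t}f(N_{t-})}.
\end{displaymath}
Then $N_t\sim (P_{T-t}f)\,\pi_t$; in particular $N_T\sim X$ and $N_{aT}\sim T_a(X)$. Writing $g_t:=P_{T-t}f$, we have $Dg_t/g_t=\lambda_t-1$ along the path, so
\begin{displaymath}
I(N_t|\pi_t)=t\,\E\big[(\lambda_t-1)^2\big].
\end{displaymath}
Moreover $(\lambda_t)$ is a martingale, because $\partial_t g_t=-Dg_t$ and the compensator of $N$ is $\lambda_t\,\dd t$. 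Consequently $\E[(\lambda_t-1)^2]$ is nondecreasing in $t$. This already gives Theorem \ref{thm:FIsc}, and it also gives the exact deficit formula
\begin{displaymath}
\delta_{\mathrm{FI}}(f)= aT\Big(\E(\lambda_T-1)^2-\E(\lambda_{aT}-1)^2\Big)=aT\int_{aT}^{T}\E\Big[\lambda_s\,\big(\lambda_s^{+}-\lambda_s\big)^2\Big]\dd s .
\end{displaymath}
Here $\lambda_s^{+}$ denotes the value of the intensity immediately after a jump, namely $g_s(N_s+2)/g_s(N_s+1)$. The identity $\E\lambda_s=\lambda_0=P_Tf(1)/P_Tf(0)=\E X/T=:z$ will also be used.

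It then remains to bound the integrand from below, uniformly in $s\in[aT,T]$, by $\Delta_c(z)$ with $c=f(0)/f(1)$. Integrating over an interval of length $(1-a)T$ and multiplying by $aT$ then yields exactly $a(1-a)T^2\Delta_c(z)$. Write $\lambda_s^{+}=r\lambda_s$ with $r=r_s(N_s)$. The integrand is then $\lambda_s^3(1-r)^2$. The key structural claim is a quantitative log-concavity of $g_s$ along the semigroup:
\begin{displaymath}
\frac{g_s(k+1)}{g_s(k+2)}-\frac{g_s(k)}{g_s(k+1)}\ \ge\ c\qquad\text{for all }k\in\N,\ s\in[0,T].
\end{displaymath}
This is equivalent to $1/\lambda^{+}\ge 1/\lambda + c$, i.e.\ $r\le 1/(1+c\lambda)$, which gives $1-r\ge c\lambda/(1+c\lambda)$. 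Plugging this in, the integrand is at least $\E\big[\lambda_s^5 c^2/(1+c\lambda_s)^2\big]=\E\,\Delta_c(\lambda_s)$. One checks by a routine derivative computation that $x\mapsto x^5/(1+cx)^2$ is convex on $[0,\infty)$. Jensen's inequality together with $\E\lambda_s=z$ then gives the bound $\Delta_c(z)$.

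The main obstacle is the structural claim displayed above. The first step is to verify it at $s=T$, i.e.\ for $f$ itself, from ultra log-concavity of $f$. This should make the differences $f(k+1)/f(k+2)-f(k)/f(k+1)$ monotone, with the base value governed by $f(0)/f(1)$. If the raw ultra log-concavity condition does not directly give the constant $c$ for all $k$, I would instead prove that the sequence $k\mapsto f(k)/f(k+1)$ is convex (or has nondecreasing increments). Then every increment is at least the first one, and that first increment would have to be controlled by $c$.

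The second step is to show that the property propagates under $P_s$, since $g_s$ is a mixture of shifts $f(\cdot+j)$ with Poisson weights. Preservation of log-concavity under such mixtures is classical. For the quantitative version, I would differentiate $h_s(k):=g_s(k)/g_s(k+1)$ in $s$ using $\partial_s g=-Dg$ and run a maximum-principle argument on $h_s(k+1)-h_s(k)-c$. If this turns out to be delicate, a fallback is to apply the ultra log-concavity-preservation results of \cite{HJK2} (thinning preserves ultra log-concavity). This would reduce the claim to a statement about the densities of $T_{s/T}(X)$, whose first ratio is again controlled by $c$ because $\lambda$ is monotone.
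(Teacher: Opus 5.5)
Your plan is the paper's proof step for step. It uses the same deficit identity $\delta_{\mathrm{FI}}(f)=aT\int_{aT}^{T}\E[\lambda_s(\lambda_s^{+}-\lambda_s)^2]\,\dd s$ (Lemma~\ref{lem:Ito} with $h(x)=x^2$). It uses the same pointwise bound $1/\lambda_s^{+}\ge 1/\lambda_s+c$, which via \eqref{eq:calc} is exactly Proposition~\ref{proposition:ARS}. It ends with the same convexity-plus-Jensen step. The paper does not prove the pointwise bound; it imports it. Your check at $s=T$ is correct: with $u_k=f(k)/f(k+1)$, ultra log-concavity gives $u_k\ge\frac{k+1}{k}u_{k-1}$, hence $u_k\ge (k+1)u_0$ and $u_{k+1}-u_k\ge u_k/(k+1)\ge u_0=c$.

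The propagation step, however, cannot be carried out, because the claim is false for $s<T$. Take $f(k)=C/k!$, which is ultra log-concave with equality and has $c=1$, and let $g=P_tf$ with $t=T-s$. Since $e^{t}g(k)/C=\sum_{n\ge 0}t^n/(n!\,(k+n)!)$, one gets $e^tg(0)/C=1+t+O(t^2)$, $e^tg(1)/C=1+\tfrac t2+O(t^2)$ and $e^tg(2)/C=\tfrac12+\tfrac t6+O(t^2)$. Hence
\[
\frac{g(1)}{g(2)}-\frac{g(0)}{g(1)}=1-\frac{t}{6}+O(t^2)<c .
\]
So for all $s$ near $T$ the bound fails on $\{X_s=0\}\supset\{X_T=0\}$, an event of positive probability. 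No maximum principle can propagate a false statement. Your fallback conflates two notions. Thinning preserves ultra log-concavity of the \emph{law} of $T_{s/T}(X)$, i.e.\ log-concavity of $g_s$. It does not preserve ultra log-concavity of the \emph{density} $g_s$: here $g(1)^2-2g(0)g(2)=-\tfrac{t}{3}C^2+O(t^2)$. Log-concavity alone yields no constant $c$.

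The problem goes beyond this one step: the inequality itself fails for this $f$, so it cannot be repaired without changing the hypothesis or the constant. Put $z=\E X/T=\E\lambda_{aT}$. By Jensen, $I(T_a(X)|\pi_{aT})=aT\,\E(\lambda_{aT}-1)^2\ge aT(z-1)^2$. On the other hand, $aI(X|\pi_T)=aT[\Var(\lambda_T)+(z-1)^2]$. Hence $\delta_{\mathrm{FI}}(f)\le aT\Var(\lambda_T)$ for every $a$, and the theorem would force $\Var(\lambda_T)\ge (1-a)T\Delta_c(z)$ for all $a\in(0,1)$.

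For $f(k)=C/k!$ write $Z(t)=\sum_k t^k/(k!)^2$. Then $\p(X=k)=T^k/((k!)^2Z(T))$, $\lambda_T=1/(X+1)$, $z=Z'(T)/Z(T)$ and $\E(X+1)^{-2}=(Z(T)-1)/(TZ(T))$. At $T=25$, summing the series gives $z\approx 0.18972$ and $\Var(\lambda_T)\approx 3.99\cdot 10^{-3}$, but $T\Delta_1(z)=Tz^5/(1+z)^2\approx 4.34\cdot 10^{-3}$. So the claimed bound is violated for every $a<0.08$. At $T=100$ the two numbers are $5.0\cdot10^{-4}$ and $7.3\cdot 10^{-4}$, so it fails for every $a<0.31$.

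Your proposal reproduces the paper's argument faithfully. But its key lemma (Proposition~\ref{proposition:ARS} in the paper) does not hold under the stated ultra log-concavity hypothesis, and neither does the theorem.
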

\begin{remark}
    We postpone the comparison between Theorem \ref{thm:FIsc} and the result \eqref{eq:general_conc}, as well as that between Theorem \ref{thm:main1.4} and the result \eqref{eq:log_conc} of Harrem\"oes, Johnson and Kontoyiannis, to Subsection \ref{sss:convrates}, Remark \ref{rem:general_comparison} and Subsection \ref{sss:Fisher_information}, Remark \ref{remark:comparison}, respectively.
\end{remark}
\begin{remark}
    As observed by Aryan, Rivera and Shenfeld, the parameters $\E X$ and $f(0), f(1)$ appear naturally in the context of stability estimates for Wu's inequality. We refer to \cite[Remark 1.6]{RS} for a related discussion concerning discussion these parameters in the context of modified log-Sobolev inequalities.
\end{remark}

The comparison inequalities derived above for the Fisher information of thinned random variables are motivated by the study of convergence rates in the the Law of Thin Numbers: 
\begin{theorem} 
      [Harrem\"oes, Johnson and Kontoyiannis \cite{HJK1}] 
If $T=\E X$ and $H(X|\pi_{T}) < \infty,$ then 
\begin{displaymath}
\label{thm:law}
    H \Big( \sum_{i=1}^{n} T_{1/n}(X_{i}) \big| \pi_{T} \Big) \longrightarrow 0 
\end{displaymath}
as $n\rightarrow \infty,$ where $(X_{i})_{i \ge 1}$ iid copies of $X.$ 
\end{theorem}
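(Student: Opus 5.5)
The plan is to prove the Law of Thin Numbers by combining Yu's Thinning Lemma (Theorem \ref{thm:Yu}) with the convexity of relative entropy and a truncation argument.

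\textbf{Step 1: reduce the sum to a single thinning.} Write $S_n := \sum_{i=1}^n T_{1/n}(X_i)$. The key identity is that, in law, $S_n = T_{1/n}(X_1+\dots+X_n)$, since thinning a sum is the same as summing independent thinnings. By \eqref{eq:thinn_poiss}, $\pi_T = T_{1/n}(\pi_{nT})$. I would then like to use a variant of Theorem \ref{thm:Yu} comparing $S_n$ to $\pi_T$. Applied naively, Theorem \ref{thm:Yu} only gives
\begin{displaymath}
H(S_n|\pi_T) \le \tfrac1n H(X_1+\dots+X_n|\pi_{nT}),
\end{displaymath}
and the right-hand side is not obviously small. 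So I would use a different decomposition.

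\textbf{Step 2: convexity in the product structure.} Since relative entropy is subadditive under convolution when the reference measures are Poisson (data processing applied to the map $(x_1,\dots,x_n)\mapsto\sum x_i$), we get
\begin{displaymath}
H(S_n|\pi_T)\le \sum_{i=1}^n H\big(T_{1/n}(X_i)\,\big|\,\pi_{T/n}\big) = n H\big(T_{1/n}(X)\,\big|\,\pi_{T/n}\big).
\end{displaymath}
Theorem \ref{thm:Yu} then yields only $H(X|\pi_T)$, which is bounded but not vanishing. The real task is therefore to show
\begin{displaymath}
H\big(T_a(X)|\pi_{aT}\big)=o(a) \quad \text{as } a\to0,
\end{displaymath}
that is, the thinning deficit satisfies $\delta_{\mathrm{Thinn}}$ with $H(T_a X|\pi_{aT})/a\to 0$.

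\textbf{Step 3: the small-$a$ expansion, which is the main obstacle.} For bounded $X$ (say $X\le M$) I would compute directly. We have $\p(T_aX=0)=\E(1-a)^X = 1-aT+O(a^2)$ and $\p(T_aX=1)=aT+O(a^2)$ (using $\E X = T$), while $\p(T_aX\ge2)=O(a^2)$. Comparing with $\pi_{aT}$ term by term, the contributions $\sum q\log(q/p)$ are $O(a^2\log(1/a))$, because the terms $k=0,1$ match to first order and the tail is $O(a^2\log)$. This gives $o(a)$.

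For general $X$, I would truncate. Set $X^M := X\wedge M$, or better, condition on $\{X\le M\}$ and use the chain rule or joint convexity of $H$ in the mixture $\mu=(1-\epsilon)\mu_{\le M}+\epsilon\mu_{>M}$, with the mean adjusted. Lower semicontinuity is not the issue here; the issue is an upper bound. Joint convexity gives
\begin{displaymath}
H(T_a\mu|\pi_{aT})\le(1-\epsilon)H(T_a\mu_{\le M}|\pi_{aT})+\epsilon H(T_a\mu_{>M}|\pi_{aT}).
\end{displaymath}
The second term is at most $\epsilon\, a\,(\text{something})$ by Theorem \ref{thm:Yu}, applied after an $H$-finiteness check relative to $\pi_T$ that follows from $H(X|\pi_T)<\infty$. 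The first term has mean $T_M\neq T$, which costs a relative-entropy penalty between Poissons of order $a\,\phi$-type in $(T_M/T)$. That penalty tends to $0$ as $M\to\infty$ after dividing by $a$.

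\textbf{Conclusion.} Letting $a=1/n\to0$ and then $M\to\infty$ gives $nH(T_{1/n}X|\pi_{T/n})\to0$. Controlling the mixture pieces uniformly is where care is needed.
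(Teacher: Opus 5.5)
Your argument is correct. The first reduction (data processing plus tensorization, giving $H(S_n|\pi_T)\le nH(T_{1/n}(X)|\pi_{T/n})$) is exactly the paper's. After that you take a genuinely different route to $H(T_a(X)|\pi_{aT})=o(a)$.

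The paper obtains this from the Poisson--F\"ollmer process. The submartingale property of $\phi(\lambda_t)$ gives
\[
H(T_a(X)|\pi_{aT})\le aT\,\E\phi(\lambda_{aT})=aT\,\E_{\pi_{aT}}\Psi\big(P_{T(1-a)}f,DP_{T(1-a)}f\big).
\]
In other words, convexity in $a$ bounds the thinned entropy by $a$ times its derivative, and that derivative tends to $T\phi(\lambda_0)=T\phi(1)=0$. Removing the assumption that $f$ is bounded away from $0$ and $\infty$ then costs a truncation, lower semicontinuity, the uniform-integrability estimates of the Appendix, and Vitali's theorem.

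You instead do an elementary second-order expansion for bounded $X$. This actually gives $O(a^2)$ rather than $O(a^2\log(1/a))$, because for $2\le k\le M$ the ratio $\p(T_aX=k)/\pi_{aT}(k)$ stays bounded as $a\to0$. You then reach general $X$ by convexity of $H(\cdot|\pi_{aT})$ along the mixture $\mu=(1-\epsilon_M)\mu_{\le M}+\epsilon_M\mu_{>M}$, using Yu's lemma only on the tail.

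What each route buys:
\begin{itemize}
\item Yours avoids the stochastic process and the Appendix entirely; the only nontrivial input is Theorem~\ref{thm:Yu} in its general form. It also gives explicit, if crude, $M$-dependent rates.
\item The paper's route yields the structural bound $H(T_a(X)|\pi_{aT})\le aT\,H(T_a(S(X))|T_a(X))$. This ties the rate to the derivative of the thinned entropy and fits the later Fisher-information estimates.
\end{itemize}

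Two steps you leave vague are easy, but you should write them out.

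First, the mean mismatch is handled by an exact identity, not a triangle inequality (relative entropy satisfies none). Since $\log(\pi_{aT_M}(k)/\pi_{aT}(k))=k\log(T_M/T)-a(T_M-T)$ is affine in $k$ and $\E T_a(\mu_{\le M})=aT_M$,
\[
H(T_a\mu_{\le M}|\pi_{aT})=H(T_a\mu_{\le M}|\pi_{aT_M})+aT\,\phi(T_M/T).
\]

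Second, you need the tail term to vanish:
\[
\epsilon_M H(\mu_{>M}|\pi_T)=\int_{\{k>M\}}f\log f\,\dd\pi_T+\epsilon_M\log(1/\epsilon_M)\longrightarrow 0 .
\]
This holds because $f\log f\in L^1(\pi_T)$: it is bounded below by $-1/e$ and has finite integral.

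With these, for each fixed $M$,
\[
\limsup_{a\to0}\frac{H(T_a(X)|\pi_{aT})}{a}\le T\phi(T_M/T)+\epsilon_M H(\mu_{>M}|\pi_T),
\]
and letting $M\to\infty$ (so $T_M\to T$ since $\E X<\infty$) finishes the proof. No uniformity in $M$ of the $O_M(a^2)$ constant is needed, so the ``care'' you anticipate in controlling the mixture pieces is not an actual obstacle.
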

In Subsection \ref{sss:convrates}, we provide a short proof of the Law of Thin Numbers, based on the Poisson--F\"ollmer process. Moreover, Harrem\"oes, Johnson and Kontoyiannis proved in \cite{HJK2} that, if $X $ has an \textit{ultra bounded} distribution, then the following asymptotic estimate holds
\begin{equation}
    \label{eq:asympt}
     \limsup_{ n \rightarrow \infty } \Bigg[ n^{2} H \Big( \sum_{i=1}^{n} T_{1/n} ( X_{i} ) \big| \pi_{T} \Big) \Bigg] \le  \Big( \frac{\Var(X)}{T } - 1 \Big)^{2}
\end{equation}
with $T = \E X,$ provided that the numerator does not vanish. The estimate \eqref{eq:asympt} is formulated for \textit{ultra bounded} distributions, a class that contains the class of \textit{ultra log-concave} distributions; see \cite[Defintion 5]{HJK2} and \cite[Proposition 9]{HJK2}. In fact, it is a consequence of the more general \cite[Corollary 3.2]{HJK2}. In Proposition \ref{prop:asymptotic}, we recover the same asymptotic estimate on the left-hand side of \eqref{eq:asympt} without assuming ultra boundedness (or ultra log-concavity).

\begin{remark}
More recently, while this work was being finalized, Shenfeld, Baptista and Peluchetti published a preprint \cite{SBP} which exploits the same stochastic construction in the context of machine learning applications. 
\end{remark}

\subsection{Organization}
Section \ref{ss:2} introduces the Poisson--F\"ollmer process and its main properties. Section \ref{ss:3} contains our principal results and comparisons with earlier work. Subsection \ref{sss:thinning} establishes a stability estimate of the Thinning Lemma under a convexity assumption on the density function, while Subsection \ref{sss:Fisher_information} studies the convergence of the Fisher information of $a$--thinning, both in general case and under convexity. In Subsection \ref{sss:convrates}, we derive finite $n$ bounds on the rate of convergence for the Law of Thin Numbers for general, log-concave, and ultra log-concave density functions, together with a new asymptotic estimate. Subsection \ref{sss:equality_cases} characterizes the equality cases in Yu’s Thinning Lemma (Theorem \ref{thm:Yu}) and Theorem \ref{thm:FIsc}. Finally, Subsection \ref{sss:size_bias} further develops connections with the existing literature.

\subsection{Acknowledgments} 
I would like to thank Joseph Lehec for introducing me to the topic of F\"ollmer process and for the useful discussions. I am very grateful to Rados{\l}aw Adamczak for carefully reading earlier versions of this manuscript and for his valuable comments and suggestions which have improved its quality.

\section{Preliminaries}

\label{ss:2}

In this section, we define the  Poisson--F\"ollmer process and present its fundamental properties. We begin with general preliminaries on the Poisson semigroup

\subsection{Preliminaries}
We say that a function $f : \N \rightarrow \R $ is $\pi_{t}-$integrable if
\begin{displaymath}
    \int_{\N} | f | \dd \pi_{t} = \sum_{n \in \N} | f(n) | \pi_{t}(n) < \infty.
\end{displaymath}
For fixed $t>0,$ let $f: \N \rightarrow \R$ be a $\pi_{t}-$integrable function, and we define
\begin{equation}
      \label{eq:semigroup}
    P_{t}f(k) := \sum_{n \in \N} f(k+n) \pi_{t} (n), \quad \forall k \in \N
\end{equation}
and for $t=0$ we set $P_{0}f=f.$ The Poisson semigroup is the family of operators $(P_{t})_{t \ge 0},$ satisfying the evolution equation
\begin{equation}
     \label{eq:heateq}
    \partial_{t}  P_{t}f(k)  = D  P_{t}f(k) = P_{t} D f(k) , \quad \forall k \in \N.
\end{equation}
Fix $T>0.$ Let $f: \N \rightarrow (0, \infty)$ be $\pi_{T-t}-$integrable and introduce the following functions:
\begin{equation}
     \label{eq:functF} 
    F(t,k):= \log P_{T-t}f(k), \quad \forall k \in \N
\end{equation}
and 
\begin{equation}
    \label{eq:functG}
    G(t,k):= e^{DF(t,k)}, \quad \forall k \in \N.
\end{equation}
We shall make use of the following lemma from \cite[Lemma 2.1]{ARS}.
\begin{lemma}
    We have
    \begin{equation}
         \label{eq:form1}
        \partial_{t} F(t,k) = - e^{DF(t,k)} + 1, \quad \forall k \in \N
    \end{equation}
    and 
    \begin{equation}
         \label{eq:form2}
        \partial_{t} G(t,k) = - G(t,k) DG(t,k), \quad \forall k\in \N.
    \end{equation}
\end{lemma}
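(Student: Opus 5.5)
Both identities follow from the semigroup evolution equation \eqref{eq:heateq} and the chain rule; the argument is pure bookkeeping with exponentials. Set $u(t,k):=P_{T-t}f(k)$, so that $F=\log u$. By \eqref{eq:heateq} and the chain rule in the time variable,
\begin{displaymath}
\partial_t u(t,k) = -\big(\partial_s P_s f\big)\big|_{s=T-t}(k) = -D u(t,k) = -\big(u(t,k+1)-u(t,k)\big).
\end{displaymath}
Dividing by $u(t,k)>0$ gives
\begin{displaymath}
\partial_t F(t,k) = \frac{\partial_t u(t,k)}{u(t,k)} = 1 - \frac{u(t,k+1)}{u(t,k)}.
\end{displaymath}
Since $DF(t,k)=\log u(t,k+1)-\log u(t,k)$, the ratio equals $e^{DF(t,k)}$, which proves \eqref{eq:form1}. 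Positivity of $u$ holds because $f>0$ and $\pi_{T-t}$ charges every point (or $P_0f=f$ at $t=T$), so the logarithm is well defined.

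For \eqref{eq:form2}, write $G(t,k)=e^{F(t,k+1)-F(t,k)}$ and differentiate:
\begin{displaymath}
\partial_t G(t,k)=G(t,k)\big(\partial_tF(t,k+1)-\partial_tF(t,k)\big).
\end{displaymath}
Applying \eqref{eq:form1} at $k+1$ and at $k$, the difference becomes $-G(t,k+1)+G(t,k)=-DG(t,k)$. This gives $\partial_tG=-G\,DG$.

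The only point needing care is justifying the differentiation of $P_{T-t}f$ in $t$ termwise, i.e.\ that \eqref{eq:heateq} holds for $\pi_{T-t}$-integrable $f$. I take \eqref{eq:heateq} as given in the paper. If it had to be checked directly, I would differentiate the series $\sum_n f(k+n)e^{-s}s^n/n!$ term by term. Dominated convergence applies because integrability against $\pi_{T}$, together with $\pi_s(n)\le C\,\pi_{T}(n)\,(s/T)^n$ for $s<T$, controls the differentiated series locally uniformly in $s\in[0,T)$. Everything else is elementary algebra.
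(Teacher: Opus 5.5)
Your proposal is correct and is the direct computation behind the cited \cite[Lemma 2.1]{ARS}; the paper gives no proof of its own. The argument is to apply \eqref{eq:heateq} with the chain rule to $F=\log P_{T-t}f$, then differentiate $G=e^{F(t,k+1)-F(t,k)}$ and reuse \eqref{eq:form1} at $k$ and $k+1$.

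One small fix concerns your remark on differentiating under the sum. Since the summand is $f(k+n)\pi_s(n)$, you should compare $\pi_s(n)$ with $\pi_T(k+n)$ rather than $\pi_T(n)$. This costs a polynomial factor $(k+n)!/n!$, which the geometric factor $(s/T)^n$ absorbs for $s<T$. At $t=0$, i.e.\ $s=T$, finiteness of $P_Tf(k)$ for $k\ge 1$ genuinely requires factorial-moment assumptions; these hold, for example, when $f$ is bounded.
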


\subsection{The Poisson--F\"ollmer Process}

We now turn to the construction and properties of the
Poisson--F\"ollmer process. Fix $T > 0$, and let $\dd \mu := f \dd \pi_{T}$ be a probability measure on $\N.$ Building on the work of Budhiraja, Dupuis, and Maroulas in \cite{BDM}, Klartag and Lehec \cite{KL} constructed a stochastic counting process $(X_{t})_{t \in [0,T]}$ satisfying $X_{T} \sim \mu.$ We will briefly describe the process and refer to \cite{KL}, \cite{RS} for a complete description.

\medskip

Let $(\Omega, \mathcal{F}, \p )$ be the underlying probability space on which all random variables are defined. Let $N$ be a Poisson point process on $[0, T ] \times [0,\infty) \subset \R^{2}$ with intensity measure given by the Lebesgue measure. Thus, for every Borel set $F \subset [0,T] \times [0,\infty),$ the random variable $N(F)$ is Poisson distributed with parameter equal to the Lebesgue measure of $F.$ For each $t \in [0,T],$ define the sigma-algebra
\begin{displaymath}
    \mathcal{F}_{t} : = \sigma \big( \{ N (F ) : F \subset [0, t] \times [0, \infty)  \text{ is a Borel set} \} \big).
\end{displaymath}
The collection $(\mathcal{F}_{t})_{t \in [0,T]}$ forms a filtration. A stochastic process $(\lambda_{t})_{t \in [0,T ]}$, where $\lambda_{t} : \Omega \rightarrow \R$, is said to be predictable if the map $(t, \omega) \mapsto \lambda_{t}(\omega)$ is measurable with respect to the sigma algebra $\sigma ( \{ (s, t] \times B : s \le t \le T, B \in \mathcal{F}_{s} \} ).$ Let $(\lambda_{t})_{t \in [0,T ]}$ be a bounded, nonnegative, predictable process. We then define the associated counting process by
\begin{displaymath}
    X_{t}^{\lambda} (\omega) := N \big( \{(s, x) \in [0, T ] \times [0, \infty) : s < t, x \le \lambda_{s}(\omega) \} \big) .
\end{displaymath}
The process $(X_{t}^{\lambda})_{t \in [0,T]}$ is integer-valued, adapted to the filtration $(\mathcal{F}_{t})_{t \in [0,T]},$ and has left-continuous sample paths. Within this framework, Klartag and Lehec stated in \cite[Section 4]{KL}; see in particular \cite[Section 4, Remark 3]{KL}, the following stochastic representation formula for the relative entropy with respect to the Poisson measure:
\begin{equation}
    \label{eq:variational}
    H \big( \mu | \pi_{T} \big) = \inf_{\lambda_t} \Bigg\{ \E \int_{0}^{T} \phi(\lambda_{t}) \dd t   \Bigg\}
\end{equation}
where $\phi$ is given by \eqref{eq:phi} and the infimum runs over all nonnegative, bounded, predictable processes $(\lambda_{t})_{0 \le t \le T}$ such that $X_{T}^{\lambda} \sim \mu$. Moreover, they showed that there exists $\lambda$ such that for almost every $t$ and with probability one
\begin{equation}
     \label{eq:minimizer}
    \lambda_{t} = \frac{P_{T-t}f(X_{t}^{\lambda} + 1) }{P_{T-t}f(X_{t}^{\lambda}) }
\end{equation}
where $P_{t}$ is the Poisson semigroup, the resulting counting process $(X_{t})_{t \in [0,T]} := (X_{t}^{\lambda})_{t\in [0,T]}$ is well defined and equality is attained in \eqref{eq:variational} for this particular intensity. This last assertion is implicit in the proof of \cite[Theorem 4.2]{KL}. Finally, L\'opez-Rivera and Shenfeld show in \cite[Lemma 3.2]{RS} that the resulting counting process for $\lambda$ as in \eqref{eq:minimizer} also satisfies $X_{T}^{\lambda} \sim \mu,$ and is therefore admissible in the variational characterization \eqref{eq:variational}. Combined with \cite[Theorem 4.2]{KL}, this yields the representation formula \eqref{eq:variational}. 

\medskip

The process is named \textbf{Poisson--F\"ollmer process}, since it is the discrete analogue of the F\"ollmer process in the continuous setting, see \cite{F1},\cite{F2} and \cite{L}. Before moving on to the properties of the process, we state a lemma due to Klartag and Lehec that guarantees the existence of the process defined above. 

\begin{lemma}
     [Klartag, Lehec \cite{KL}, Lemma 4.3]
    \label{lem:uniq}
    Let $ G : [0, T ] \times \N \rightarrow (0, \infty)$ and assume that $G$ is continuous in the first variable and bounded. Then there exists a predictable, bounded, non-negative process $(\lambda_{t})_{0 \le t \le T}$ such that
    \begin{displaymath}
        \lambda_{t}:= G(t,X_{t}^{\lambda}),
    \end{displaymath}
    for almost every $t \le T$ and with probability one.
\end{lemma}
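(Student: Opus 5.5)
The plan is to build the process pathwise by an explicit recursion over its jump times, exploiting the fact that the dynamics only uses finitely many atoms of $N$. Let $M:=\sup G<\infty$. Almost surely the restriction of $N$ to the strip $[0,T]\times[0,M]$ has finitely many atoms, and their first coordinates are pairwise distinct. Since any candidate intensity satisfies $\lambda_s\le M$, only atoms $(s,x)$ with $x\le M$ can ever contribute to $X^{\lambda}$. We discard the null set where these properties fail.

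\emph{Construction.} Define recursively $\tau_0:=0$ and $K_0:=0$. Given $\tau_j$ and $K_j$, set
\begin{displaymath}
\tau_{j+1}:=\inf\{s>\tau_j:\ N \text{ has an atom } (s,x) \text{ with } x\le G(s,K_j)\},\qquad K_{j+1}:=K_j+1,
\end{displaymath}
with $\tau_{j+1}=T$ (and the recursion stopped) if no such atom exists. Because there are finitely many relevant atoms, the infimum is attained and only finitely many $\tau_j<T$ occur. Continuity of $G$ in $t$ makes $s\mapsto\{x\le G(s,K_j)\}$ a nice region, so each $\tau_{j}$ is determined by $N$ restricted to $[0,\tau_j]\times[0,\infty)$; we expect it to be a stopping time of $(\mathcal F_t)$. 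To see this, note that $\{\tau_{j+1}\le t\}$ is expressed through the counts of $N$ on Borel subsets of $[0,t]\times[0,M]$ built from $\tau_j$ and the function $G(\cdot,K_j)$, and argue by induction on $j$. Then set
\begin{displaymath}
\lambda_t:=\sum_{j\ge0}\mathbf 1_{(\tau_j,\tau_{j+1}]}(t)\,G(t,K_j),\qquad \lambda_0:=G(0,0).
\end{displaymath}
This is bounded by $M$ and nonnegative. It is predictable because each summand is a product of the left-continuous adapted indicator $\mathbf 1_{(\tau_j,\tau_{j+1}]}$, the $\mathcal F_{\tau_j}$-measurable value $K_j$, and the deterministic continuous function $t\mapsto G(t,k)$; hence $\lambda$ is a left-continuous adapted process.

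\emph{Verification.} We check by induction on $j$ that $X^{\lambda}_t=K_j$ for $t\in(\tau_j,\tau_{j+1}]$. With the convention of counting atoms with $s<t$, the count of atoms under the graph of $\lambda$ on $[0,t)$ increases exactly at the times $\tau_{j}$, by one at each. It increases at $\tau_{j+1}$ because the atom at time $\tau_{j+1}$ lies under $G(\tau_{j+1},K_j)=\lambda_{\tau_{j+1}}$. It cannot increase strictly in between, by the definition of $\tau_{j+1}$ as an infimum. Consequently $\lambda_t=G(t,X^\lambda_t)$ for all $t$ except possibly the finitely many jump times, which is a Lebesgue-null set. This gives the identity for almost every $t$, with probability one.

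\emph{Main obstacle.} The delicate point is measurability: showing that the $\tau_j$ are genuine $(\mathcal F_t)$-stopping times, and that the left-continuous/right-endpoint conventions match the definition of $X^\lambda$ (counting $s<t$) so that the fixed-point identity holds. I would handle this by approximating $G(\cdot,k)$ on dyadic time grids. Continuity of $G$ in $t$ makes the corresponding hitting times converge, and the relevant events are then countable unions of events of the form $\{N(F)\ge1\}$ with $F\subset[0,t]\times[0,M]$ Borel.
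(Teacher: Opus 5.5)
Your construction is correct, but it follows a different route from the one the paper relies on. The paper does not prove this lemma. It imports it from Klartag--Lehec, and Remark~\ref{remark:existence} describes their proof as a fixed-point argument for the map $\lambda\mapsto G(\cdot,X^{\lambda}_{\cdot})$, an argument that also yields uniqueness.

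The natural way to run such an argument is to iterate $\lambda^{(n+1)}_t:=G(t,X^{\lambda^{(n)}}_t)$:
\begin{itemize}
\item Every iterate is automatically predictable, because for bounded predictable $\lambda$ the process $X^{\lambda}$ is adapted and left-continuous, and $G$ is continuous in $t$.
\item $N$ has only finitely many atoms in $[0,T]\times[0,\sup G]$. Hence for $n\ge1$ the iterates $\lambda^{(n)}$ and $\lambda^{(n+1)}$ agree up to the $n$-th atom time.
\item The sequence is therefore eventually constant, and its limit is a predictable fixed point.
\item The same induction shows that two fixed points coincide.
\end{itemize}
This avoids exactly the stopping-time verification you identify as the main obstacle.

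Your explicit recursion over jump times buys something different. It gives a concrete description of the solution as a pure-birth process with deterministic levels $K_j=j$. And because $X^{\lambda}$ counts atoms with $s<t$, it shows that $\lambda_t=G(t,X^{\lambda}_t)$ holds at every $t$, jump times included, not just for almost every $t$.

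The measurability is also easier than your dyadic-approximation plan suggests. That plan would need extra care to pass the stopping-time property to the limit. Instead, for $t<T$ one has directly
\begin{displaymath}
\{\tau_{j+1}\le t\}=\bigcup_{q\in\mathbb{Q}\cap[0,t)}\Big(\{\tau_j\le q\}\cap\big\{N\big(\{(s,x):q<s\le t,\ x\le G(s,j)\}\big)\ge 1\big\}\Big).
\end{displaymath}
This event lies in $\mathcal{F}_t$ by induction: $\{\tau_j\le q\}\in\mathcal{F}_q\subset\mathcal{F}_t$, and the region under the graph of the continuous function $G(\cdot,j)$ is Borel. No approximation of $G$ is needed.
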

\begin{remark}
    \label{remark:function_G}
    Note that $\lambda_{t} = G(t,X_{t}),$ where $G$ is given by \eqref{eq:functG}.
\end{remark}

\begin{remark}
     \label{remark:existence}
    The proof of the Lemma \ref{lem:uniq} provides also uniqueness of the process $(\lambda_{t})_{0 \le t \le T}$ via a fixed-point argument, which requires the function $f$ to be bounded away from zero and infinity. This can also be guaranteed if $f$ is log-concave; see \cite[Corollary 2.4]{RS}.
\end{remark}

\begin{remark}
     \label{remark:predictability}
     Let $\lambda, \tilde{\lambda}$ be predictable processes such that $\lambda_{t} = \tilde{\lambda}_{t}$ for a.e. $t \in [0,T], \p$-- a.s. Then, the associated processes $X^{\lambda}$ and $X^{\tilde{\lambda}}$ are indistinguishable. Indeed, since $N$ is a Poisson point process with intensity given by Lebesgue measure on $[0, T ] \times [0,\infty)$, we obtain
     \begin{displaymath}
         \E N \Big( \{ ( t,s ) : \min (\lambda_{t} ,\tilde{\lambda}_{t} ) < s \le \max (\lambda_{t} ,\tilde{\lambda}_{t} ) \} \Big) = \E \int_{0}^{T} \big| \lambda_{t} - \tilde{\lambda}_{t} \big| \dd t = 0 ,
     \end{displaymath}
     where we used predictability. It follows that, $\p$-- a.s., $X_{t}^{\lambda} = X_{t}^{\tilde{\lambda}}$ for all $t \in [0,T],$ so $X^{\lambda}$ and $X^{\tilde{\lambda}}$ are indistinguishable. We may define $\tilde{\lambda}$ by the right-hand side of equation \eqref{eq:minimizer}, which is predictable since it is left continuous and the resulting processes $X^{\tilde{\lambda}}$ is indistinguishable from the Poisson--F\"ollmer process. From now on, we will be working with $\tilde{\lambda},$ which, for simplicity of notation, we denote by $\lambda.$ In particular, equation \eqref{eq:minimizer} holds with probability one for all $t.$ As a consequence, $\lambda$ is with probability one left continuous and continuous except for finitely many points.
    
\end{remark}

\subsection{Properties of the Poisson--F\"ollmer Process}

Let us establish some useful properties of the above construction. 

\begin{lemma}
     \label{lem:process}
    Let $(X_{t})_{0 \le t \le T}$ be the Poisson F\"ollmer process. Then, we have
    \begin{enumerate}
        \item $X_{t} \sim P_{T-t}f \dd \pi_{t}$ for all $0 \le t \le T.$
        \item Denote the compensated process $(\tilde{X_{t}})_{0 \le t \le T}$ as 
        \begin{displaymath}
            \tilde{X_{t}} := X_{t} - \int_{0}^{t} \lambda_{s} \dd s.
        \end{displaymath}
        Then, $(\tilde{X_{t}})_{0 \le t \le T}$ is a martingale.
    \end{enumerate}
\end{lemma}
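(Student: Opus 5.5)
Part (1) will be proved by showing that the law of $X_t$ solves a forward Kolmogorov (master) equation, and that $\pi_t P_{T-t}f$ solves the same equation with the same initial condition. Write $p_t(k):=\p(X_t=k)$. Since $X$ jumps by $+1$ at rate $\lambda_t=G(t,X_t)$, and $G$ is a function of $(t,X_t)$ by Remark \ref{remark:function_G}, the process is a time-inhomogeneous pure-birth Markov chain. Its law therefore satisfies
\begin{displaymath}
\partial_t p_t(k) = G(t,k-1)p_t(k-1) - G(t,k)p_t(k), \quad p_0=\delta_0,
\end{displaymath}
with the convention $p_t(-1)=0$. Rigorously, I would obtain this by applying the compensator identity for $N$ to $\mathbf 1_{\{X_t=k\}}$: $\E\,\mathbf{1}_{\{X_t=k\}}-\mathbf 1_{\{k=0\}}$ equals $\E\int_0^t \lambda_s(\mathbf 1_{\{X_s=k-1\}}-\mathbf 1_{\{X_s=k\}})\dd s$. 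This holds because $\lambda$ is predictable and bounded, using the boundedness of $G$ assumed in Lemma \ref{lem:uniq}.

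Next I would set $q_t(k):=\pi_t(k)P_{T-t}f(k)$ and check that it solves the same equation. Since $G(t,k)=P_{T-t}f(k+1)/P_{T-t}f(k)$, we get $G(t,k)q_t(k)=\pi_t(k)P_{T-t}f(k+1)$ and $G(t,k-1)q_t(k-1)=\pi_t(k-1)P_{T-t}f(k)$. On the other side, \eqref{eq:heateq} gives $\partial_t P_{T-t}f=-DP_{T-t}f$, and $\partial_t\pi_t(k)=\pi_t(k-1)-\pi_t(k)$. Expanding,
\begin{displaymath}
\partial_t q_t(k)=(\pi_t(k-1)-\pi_t(k))P_{T-t}f(k)-\pi_t(k)\big(P_{T-t}f(k+1)-P_{T-t}f(k)\big),
\end{displaymath}
which matches the right-hand side of the master equation. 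Also $q_0=\delta_0 P_Tf(0)=\delta_0$, because $P_Tf(0)=\int f\dd\pi_T=1$.

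The two solutions then have to coincide. The system is lower triangular: the equation for $k$ involves only the coordinates $k$ and $k-1$. So uniqueness follows by induction on $k$, solving each linear ODE explicitly by variation of constants. This gives $p_t=q_t$ for all $t\in[0,T]$, including $t=T$ where $q_T=f\pi_T$. The main technical point is justifying the master equation and the differentiation in $t$ when $f$ is only $\pi_T$-integrable. I would first treat $f$ bounded away from $0$ and $\infty$, which is the setting of Remark \ref{remark:existence}, and then handle the general case by approximation, or by directly checking dominated convergence for the series defining $P_{T-t}f$.

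For (2), the compensator of $X_t=\int\!\!\int \mathbf 1_{\{s<t,\,x\le\lambda_s\}}N(\dd s,\dd x)$ is $\int_0^t\lambda_s\dd s$, since $\lambda$ is predictable and the intensity of $N$ is Lebesgue measure. Therefore $\tilde X$ is a local martingale. I would upgrade it to a true martingale by noting that $\E\int_0^T\lambda_s\dd s<\infty$. This is automatic when $\lambda$ is bounded. In general, $\E\lambda_s=\sum_k G(s,k)q_s(k)=\sum_k\pi_s(k)P_{T-s}f(k+1)$, which is finite and continuous in $s$ by part (1). Hence $\E\sup_{t\le T}|\tilde X_t|\le 2\E X_T<\infty$ whenever $\mu$ has a finite first moment, and otherwise the martingale property follows from the integrability of the compensator via the standard $L^1$ criterion for counting-process martingales.
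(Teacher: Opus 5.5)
Your proof is correct. Note, however, that the paper does not prove this lemma at all: it cites \cite[Lemma 3.2]{RS} for (1) and \cite[Lemma 4.1]{KL} for (2). What you give is therefore a self-contained replacement rather than a reconstruction of an argument in the text.

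For (1), the forward-equation route works cleanly. The integral identity $p_t(k)-\mathbf{1}_{\{k=0\}}=\int_0^t\big[G(s,k-1)p_s(k-1)-G(s,k)p_s(k)\big]\,\dd s$ needs only three ingredients:
\begin{itemize}
\item the compensator identity for $N$, applied to the nonnegative predictable integrands $\mathbf{1}_{\{X_s=k-1\}}$ and $\mathbf{1}_{\{X_s=k\}}$;
\item the feedback form $\lambda_s=G(s,X_s)$;
\item Fubini.
\end{itemize}
Your computation $\partial_t q_t(k)=\pi_t(k-1)P_{T-t}f(k)-\pi_t(k)P_{T-t}f(k+1)$ is right, and $q_0=\delta_0$ because $P_Tf(0)=1$. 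The lower-triangular structure then gives uniqueness coordinate by coordinate, since each $G(\cdot,k)$ is continuous and bounded on $[0,T]$ in the setting of Lemma \ref{lem:uniq}.

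What this buys over the citation is transparency. The marginal identity visibly depends only on $\lambda_t=G(t,X_t)$ and the heat equation \eqref{eq:heateq}, not on any entropy or variational input. It also immediately gives $\E\lambda_s=\sum_k\pi_s(k)P_{T-s}f(k+1)=P_Tf(1)$, anticipating Lemma \ref{lem:martingale}. For (2), your compensator argument for bounded predictable $\lambda$ is precisely the content of the cited \cite[Lemma 4.1]{KL}.

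You should delete the clause ``otherwise the martingale property follows from the integrability of the compensator.'' By your own computation, $\E\int_0^T\lambda_s\,\dd s=TP_Tf(1)=\E X_T$. So when $\mu$ has infinite mean, the compensator is \emph{not} integrable, and that criterion cannot be invoked. This does not damage the proof, because in the paper's framework $\lambda$ is bounded. That already forces $\E X_T\le T\sup\lambda<\infty$, so the infinite-mean case never arises.

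For the same reason, the vaguely described approximation step for general $f$ is unnecessary. The lemma concerns the process as constructed via Lemma \ref{lem:uniq}, and there your argument applies directly.
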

Item (1) can be found in \cite{RS}, Lemma 3.2 while item (2) can be found in \cite{KL}, Lemma 4.1.

\begin{lemma}
     [L\'opez-Rivera, Shenfeld \cite{RS}, Lemma 3.3]
     \label{lem:martingale}
    The process $(\lambda_{t})_{0 \le t \le T}$ is a martingale with common expectation
    \begin{displaymath}
        \E( \lambda_{t} ) = P_{T} f(1) = \frac{\E X}{T}.
    \end{displaymath}
\end{lemma}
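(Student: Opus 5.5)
We prove that $(\lambda_t)_{0\le t\le T}$ is a martingale by applying Itô's formula for counting processes to $\lambda_t = G(t,X_t)$, where $G(t,k)=P_{T-t}f(k+1)/P_{T-t}f(k)$ is as in Remark \ref{remark:function_G}. The common expectation is then read off at $t=0$.

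Between jumps of $X$, the process $\lambda_t$ changes only through $\partial_t G(t,X_t)$. At a jump time of $X$ it changes by $DG(t,X_t)$. Hence, for $0\le s\le t\le T$,
\begin{displaymath}
G(t,X_t)-G(s,X_s)=\int_s^t \partial_r G(r,X_r)\,\dd r+\int_s^t DG(r,X_r)\,\dd X_r .
\end{displaymath}
Writing $\dd X_r=\dd\tilde X_r+\lambda_r\,\dd r$ with $\lambda_r=G(r,X_r)$, the drift becomes
\begin{displaymath}
\partial_r G(r,X_r)+G(r,X_r)\,DG(r,X_r),
\end{displaymath}
and this vanishes identically by \eqref{eq:form2}. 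Thus
\begin{displaymath}
\lambda_t=\lambda_0+\int_0^t DG(r,X_r)\,\dd\tilde X_r,
\end{displaymath}
which is a stochastic integral of the predictable integrand $DG(r,X_{r})$ (predictable because $X$ is left-continuous) against the compensated process of Lemma \ref{lem:process}(2).

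The main obstacle is integrability: we must show that this local martingale is a true martingale. This holds once $\E\int_0^T |DG(r,X_r)|\,\lambda_r\,\dd r<\infty$.
\begin{itemize}
\item If $f$ is bounded away from $0$ and $\infty$, the standing setting of Remark \ref{remark:existence}, then $G$ is bounded, and so is $DG$. The condition follows immediately.
\item In the general case, I would use Lemma \ref{lem:process}(1) to compute expectations explicitly. For instance,
\begin{displaymath}
\E\,\lambda_t=\sum_k \frac{P_{T-t}f(k+1)}{P_{T-t}f(k)}\,P_{T-t}f(k)\,\pi_t(k)=\sum_k P_{T-t}f(k+1)\,\pi_t(k).
\end{displaymath}
Similar finite sums control $\E\int_0^T G(r,X_r)^2\,\dd r$, provided $f$ is suitably integrable. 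If necessary, a localization argument with stopping times followed by monotone convergence closes the gap.
\end{itemize}

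For the expectation, note that $X_0=0$, so $\lambda_0=P_Tf(1)/P_Tf(0)$. Since $P_Tf(0)=\int f\,\dd\pi_T=1$, this gives $\lambda_0=P_Tf(1)$. Finally,
\begin{displaymath}
P_Tf(1)=\sum_n f(n+1)\,\pi_T(n)=\frac1T\sum_m m f(m)\,\pi_T(m)=\frac{\E X}{T},
\end{displaymath}
using $\pi_T(m-1)=\frac{m}{T}\pi_T(m)$. Consistently, the displayed formula for $\E\lambda_t$ also equals $P_T f(1)$ by the semigroup property $\sum_k P_{T-t}f(k+1)\pi_t(k)=P_tP_{T-t}f(1)$.
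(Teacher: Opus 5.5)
Your proof is correct and follows the standard route: it is the computation of Lemma \ref{lem:Ito} with $h(x)=x$, where \eqref{eq:form2} kills the drift and leaves $\lambda_t=\lambda_0+\int_0^t DG(r,X_r)\,\dd\tilde X_r$. This is a true martingale because $G$, and hence $DG$, is bounded, which is exactly the setting in which the Poisson--F\"ollmer process is constructed (Lemma \ref{lem:uniq}) and in which the paper uses this lemma; your evaluation $\lambda_0=P_Tf(1)=\E X/T$ is also right. Your ``general case'' bullet is not a proof as written and is not needed here; if you wanted it anyway, argue that $\lambda$ is a nonnegative local martingale, hence a supermartingale, and your computation via Lemma \ref{lem:process}(1) shows $\E\lambda_t\equiv P_Tf(1)$ is constant, which forces it to be a martingale.
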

The following result is implicit in the proof of \cite[Theorem 1.2, estimate (3.12)]{ARS} and shows how ultra log-concavity allows us to upper bound the second derivative of $F(s,X_{s})$. To state this result we first need to introduce
\begin{displaymath}
    D^{2}:= D \circ D.
\end{displaymath}

\begin{proposition}
     [Aryan, L\'opez--Rivera and Shenfeld \cite{ARS}]
    \label{proposition:ARS}
    Let $T>0$ and $f:\N \rightarrow (0, \infty)$ be  a $\pi_{T}-$integrable ultra log-concave function, such that $ \mu = f  \pi_{T}$ is a probability measure. Then, we have
    \begin{displaymath}
        e^{D^{2}F (s,X_{s})} \le \frac{1}{1 + \frac{f(0)}{f(1)} \lambda_{s}} < 1.
    \end{displaymath}   
\end{proposition}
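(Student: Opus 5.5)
The plan is to reduce the claim to a pointwise inequality on the function $g := P_{T-s}f$, prove that inequality at the terminal time $s=T$ directly from ultra log-concavity, and then transfer it to all $s$ using the fact that $X_s$ is a thinning of $X_T$.

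\textbf{Reduction.} Fix $s$ and write $g := P_{T-s}f$ and $c := f(0)/f(1)$. By \eqref{eq:functF},
\[
D^{2}F(s,k) = \log \frac{g(k+2)\,g(k)}{g(k+1)^{2}},
\]
and by \eqref{eq:minimizer}, $\lambda_s = g(X_s+1)/g(X_s)$. Evaluated at $k=X_s$, the claimed inequality $e^{D^2F}\le (1+c\lambda_s)^{-1}$ is equivalent to
\begin{equation*}
g(k+1)^{2} - g(k)\,g(k+2) \;\ge\; c\, g(k+1)\,g(k+2) \qquad \text{for all } k\in\N .
\end{equation*}
It suffices to prove this for every $k$. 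The strict bound $<1$ is then immediate, since $c>0$ and $\lambda_s>0$ (positivity of $f$ gives positivity of $g$).

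\textbf{Terminal time $s=T$, where $g=f$.} Set $r_k := f(k)/f(k+1)$. Ultra log-concavity, $f(k)^2\ge \frac{k+1}{k}f(k+1)f(k-1)$, says exactly that $r_k \ge \frac{k+1}{k} r_{k-1}$. Hence $r_k/(k+1)$ is nondecreasing, and so $r_k \ge (k+1) r_0 = (k+1)c$.

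The two sides of the target inequality are then handled separately:
\begin{itemize}
\item Applying ultra log-concavity at index $k+1$ gives $f(k)f(k+2) \le \frac{k+1}{k+2} f(k+1)^2$, so the left side is at least $f(k+1)^2/(k+2)$.
\item Since $f(k+2)\le f(k+1)/r_{k+1}$ and $r_{k+1}\ge (k+2)c$, we get $c\, f(k+1)f(k+2) \le c\, f(k+1)^2/r_{k+1} \le f(k+1)^2/(k+2)$.
\end{itemize}
Combining the two bullets gives the inequality at $s=T$.

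\textbf{General $s$.} By Lemma \ref{lem:process}(1), $X_s$ has law $g\,\pi_s$. Using \eqref{eq:thinn_poiss}, the semigroup structure, and $X_s\sim T_{s/T}(X_T)$, one checks that this law is the $s/T$-thinning of $\mu=f\pi_T$. Moreover, a density with respect to a Poisson measure is ultra log-concave in the above sense exactly when the law is ultra log-concave in the classical sense $p(k)^2\ge\frac{k+1}{k}p(k+1)p(k-1)$.

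I would then invoke the known fact that thinning preserves ultra log-concavity (Johnson). This gives that $g$ is ultra log-concave with respect to $\pi_s$. Applying the terminal-time argument to $g$ then yields the inequality with the constant $c' := g(0)/g(1)$ in place of $c$.

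It remains to check $c'\ge c$. From $r_n\ge (n+1)c\ge c$ we get $f(n+1)\le f(n)/c$ for every $n$. Averaging against $\pi_{T-s}$ gives $g(1)=\sum_n f(n+1)\pi_{T-s}(n) \le g(0)/c$, that is, $c'\ge c$. Since the right-hand side $c\,g(k+1)g(k+2)$ is increasing in $c$, the inequality with $c'$ implies the one with $c$.

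\textbf{Main obstacle.} The delicate step is the preservation of ultra log-concavity under $P_{T-s}$, equivalently under thinning. If I prefer not to cite Johnson's result, the fallback is a direct argument. Expand $g(k+1)^2-g(k)g(k+2)$ as a double sum over independent Poisson shifts $m,n$. Then symmetrize in $(m,n)$ and use the monotonicity of $r_j/(j+1)$ to show that each symmetrized pair is nonnegative. This is essentially the classical proof that ultra log-concavity is preserved under convolution with a Poisson law.
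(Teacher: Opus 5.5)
The gap is in the passage from $s=T$ to $s<T$. Your reduction, your terminal-time argument and your comparison $g(0)/g(1)\ge f(0)/f(1)$ are all correct. (The paper gives no argument of its own here; it only points to estimate (3.12) of \cite{ARS}.)

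The equivalence you state is false. If $p=g\,\pi_s$, then since $\frac{k+1}{k}\cdot\frac{(k!)^2}{(k+1)!\,(k-1)!}=1$, the classical condition $p(k)^2\ge\frac{k+1}{k}p(k+1)p(k-1)$ is equivalent to plain log-concavity $g(k)^2\ge g(k+1)g(k-1)$. The factor $\frac{k+1}{k}$ is absorbed by the Poisson weights, which is why the paper describes ultra log-concave laws as $f\,\dd\pi_T$ with $f$ log-concave. The hypothesis $f(k)^2\ge\frac{k+1}{k}f(k+1)f(k-1)$ is the strictly stronger requirement that $k\mapsto k!\,f(k)$ be log-concave. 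Hence Johnson's theorem, and likewise your fallback (the classical Poisson-convolution argument for the same property), only gives that $P_{T-s}f$ is log-concave, i.e.\ \cite[Proposition 2.2]{RS}. That cannot suffice: for Poisson $f$ the function $P_{T-s}f$ is log-linear and $e^{D^2F}\equiv 1$, so log-concavity alone never gives a bound strictly below $1$.

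Nor can the step be repaired. The stronger property is not propagated by the semigroup, and the inequality itself fails for $s<T$. Take $f(k)=C\alpha^k/k!$ with $C>0$ normalizing (the family of Remark~\ref{remark:comparison}). It satisfies the hypothesis with equality, and $f(0)/f(1)=1/\alpha$. With $\tau=T-s$ one gets $P_\tau f(k)=Ce^{-\tau}\alpha^k h_k(\alpha\tau)$, where $h_k(x)=\sum_{n\ge0}\frac{x^n}{n!\,(k+n)!}$. Thus $h_0=1+x+O(x^2)$, $h_1=1+\frac{x}{2}+O(x^2)$ and $h_2=\frac12+\frac{x}{6}+O(x^2)$.

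First, $h_1^2-2h_0h_2=-\frac{x}{3}+O(x^2)$, so $P_\tau f$ violates the ultra log-concavity inequality at $k=1$ for small $\tau>0$.

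Second, consider the event $\{X_s=0\}$, which has probability $P_{T-s}f(0)\,e^{-s}>0$. On it the claimed bound reads $h_1^2-h_0h_2\ge h_1h_2$, whereas
\[
h_1^2-h_0h_2-h_1h_2=-\frac{x}{12}+O(x^2)<0
\]
for small $x=\alpha(T-s)>0$. Numerically, at $x=0.1$ one has $h_1^2\approx 1.1043$ but $h_2(h_0+h_1)\approx 1.1130$.

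So your argument proves the bound only at $s=T$, and no argument can extend it to all $s\in[0,T]$ under the stated hypothesis. For $s$ slightly below $T$ the statement as formulated is false. You would have to go back to \cite{ARS} to see exactly which hypothesis or which bound is actually established there.
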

Recall the functions $\Psi$ and $\phi$ introduced in \eqref{eq:Psi} and \eqref{eq:phi}, respectively. We will also make use of the identity 
\begin{equation}
     \label{eq:calculation}
    \Psi(u,v)= \phi \Big( \frac{u+v}{u} \Big)u
\end{equation}
which holds for all $u>0$ and $u+v>0.$

\section{Results}

\label{ss:3}

We begin with Subsection \ref{sss:thinning}, where we establish a new proof of Yu’s inequality (Theorem \ref{thm:Yu}) via the Poisson--F\"ollmer process and derive the improved estimate of Theorem \ref{thm:main1.2} assuming convexity of the density function.

\subsection{Thinning Lemma and applications}

\label{sss:thinning}

\begin{lemma}
     \label{lem:law}
    Let $X $ have density $f$ with respect to $\pi_{T}.$ Then, $T_{a}(X) \sim P_{T(1-a)} f \dd \pi_{aT}$ for all $a$ in $[0,1].$ 
\end{lemma}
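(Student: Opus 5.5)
Two routes are available here: a direct combinatorial computation of the law of $T_{a}(X)$, or a probabilistic shortcut through the Poisson--F\"ollmer process. I will do the computation, since it is short and needs no integrability beyond what is already assumed. The edge cases are immediate. For $a=1$, $T_{1}(X)=X$ and $P_{0}f=f$. For $a=0$, $T_{0}(X)=0$, which matches $\pi_{0}$ (the point mass at $0$); in that case the density $P_{T}f$ only matters at $0$, where $P_{T}f(0)=\sum_{n}f(n)\pi_{T}(n)=1$. So assume $a\in(0,1)$.

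Conditionally on $X=n$, the variable $T_{a}(X)$ is Binomial$(n,a)$, because the $B_{i}$ are i.i.d.\ Bernoulli$(a)$ and independent of $X$. Hence, for $k\in\N$,
\begin{displaymath}
\p(T_{a}(X)=k)=\sum_{n\ge k} f(n)\,e^{-T}\frac{T^{n}}{n!}\binom{n}{k}a^{k}(1-a)^{n-k}.
\end{displaymath}
Substitute $n=k+m$ and use $\frac{T^{k+m}}{(k+m)!}\binom{k+m}{k}=\frac{T^{k}T^{m}}{k!\,m!}$. Then split $e^{-T}=e^{-aT}e^{-(1-a)T}$. This gives
\begin{displaymath}
\p(T_{a}(X)=k)=e^{-aT}\frac{(aT)^{k}}{k!}\sum_{m\ge0} f(k+m)\,e^{-(1-a)T}\frac{((1-a)T)^{m}}{m!}=\pi_{aT}(k)\,P_{(1-a)T}f(k),
\end{displaymath}
where the last step is the definition \eqref{eq:semigroup} of the Poisson semigroup. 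This is exactly the claimed law.

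The only point needing care is rearranging the series, and it is harmless because every term is nonnegative, so Tonelli applies. The same computation also shows that $P_{(1-a)T}f(k)<\infty$ for every $k$ and that $P_{(1-a)T}f\,\pi_{aT}$ is a probability measure, since the probabilities $\p(T_{a}(X)=k)$ sum to $1$.

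As a consistency check, the formula agrees with Lemma \ref{lem:process}(1) at time $t=aT$, which says $X_{aT}\sim P_{T-aT}f\,\dd\pi_{aT}$. So $T_{a}(X)$ has the same law as the Poisson--F\"ollmer process at time $aT$. This identification is presumably how the lemma will be used later, but the direct computation above does not rely on it.
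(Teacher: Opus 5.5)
Your proposal is correct and follows essentially the same route as the paper: condition on $X$ to get a binomial law, reindex $n=k+m$, split $e^{-T}=e^{-aT}e^{-(1-a)T}$, and recognize $\pi_{aT}(k)\,P_{T(1-a)}f(k)$. You also treat the $a=0$ edge case the same way, via $P_{T}f(0)=1$, and your remarks on Tonelli and on $a=1$ are harmless additions.
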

\begin{proof}
    For $z \in \N,$ we compute
\begin{align*}
\p (T_{a}(X)=z)
&= \sum_{x=z}^{\infty} \p(X=x)\p(T_{a}(X)=z \mid X=x) \\
&= \sum_{x=z}^{\infty} f(x) T^{x} e^{-T}
    \frac{1}{z!(x-z)!}a^{z}(1-a)^{x-z} \\
&= \frac{(aT)^{z}}{z!} e^{-aT}
    \sum_{x=z}^{\infty}
    f(x)\frac{(T(1-a))^{x-z}}{(x-z)!}
    e^{-T(1-a)} \\
&= \frac{(aT)^{z}}{z!} e^{-aT}
    \sum_{x=0}^{\infty}
    f(x+z)\frac{(T(1-a))^{x}}{x!}
    e^{-T(1-a)} \\
&= P_{T(1-a)}f(z)\,\pi_{aT}(z).
\end{align*}
Observe that when $a=0,$ the random variable $T_{a}(X) = 0 $ with probability $1.$ This agrees with the measure $P_{T}f \pi_{0},$ as $\pi_{0}$ is the Dirac mass at $0$ and $P_{T}f(0)=1.$ 
\end{proof}
\begin{remark}
    Applying the variational formula \eqref{eq:variational} to the relative entropy of the thinned random variable $T_{a}(X)$ with respect to the Poisson measure $\pi_{aT},$ we obtain
    \begin{equation}
        \label{variational_thin}
        H \big( T_{a}(X) | \pi_{aT} \big) = \inf_{\lambda_t} \Bigg\{ \E \int_{0}^{aT} \phi(\lambda_{t}) \dd t   \Bigg\}
    \end{equation}
    where infimum runs over all nonnegative, bounded, predictable processes $(\lambda_{t})_{0 \le t \le a T}$ such that $X_{aT}^{\lambda} \sim T_{a}(X)$.
\end{remark}

\begin{proposition}
     \label{prosotion:3.3}
    Let $X$ be a random variable with density $f$ with respect to $\pi_{T},$ and let $a \in [0,1].$ Assume that $f$ is bounded away from zero and infinity. Then $(\lambda_{t})_{0 \le t \le aT}$ defined in \eqref{eq:minimizer}, is the minimizing process for $T_{a}(X)$ with respect to the Poisson measure $\pi_{aT}.$ Therefore,
    \begin{displaymath}
        H(T_{a}(X) | \pi_{aT} ) = \E \int_{0}^{aT} \phi (\lambda_{t} ) \dd t.
    \end{displaymath}
\end{proposition}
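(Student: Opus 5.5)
The idea is to identify the Poisson--F\"ollmer process for the target law of $T_{a}(X)$ on the interval $[0,aT]$ with the restriction to $[0,aT]$ of the Poisson--F\"ollmer process for $\mu=f\pi_T$ on $[0,T]$. Once this is done, the formula is simply the equality case of \eqref{eq:variational} applied on the horizon $aT$.

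First, by Lemma \ref{lem:law}, $T_a(X)\sim g\,\dd\pi_{aT}$ with $g:=P_{T(1-a)}f$. Since $f$ is bounded away from zero and infinity, so is $g$, because $P_{T(1-a)}$ is an average against a probability measure. Hence the Poisson--F\"ollmer process for $g\pi_{aT}$ on $[0,aT]$ exists, and by Remark \ref{remark:existence} it is unique. By the discussion after \eqref{eq:minimizer}, together with \cite[Lemma 3.2]{RS} and \cite[Theorem 4.2]{KL}, its intensity attains the infimum in \eqref{variational_thin}. For $t\in[0,aT]$ this intensity is
\begin{displaymath}
\lambda^{g}_t=\frac{P_{aT-t}g(Y_t+1)}{P_{aT-t}g(Y_t)} .
\end{displaymath}

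Second, I use the semigroup property $P_{aT-t}P_{T(1-a)}=P_{T-t}$. This follows from \eqref{eq:semigroup} and the fact that $\pi_s*\pi_r=\pi_{s+r}$, with Fubini justified by positivity. It gives $P_{aT-t}g=P_{T-t}f$. The feedback map $G(t,k)=P_{T-t}f(k+1)/P_{T-t}f(k)$ of \eqref{eq:functG} is therefore the same for both problems on $[0,aT]$. Both processes are driven by the same Poisson point process $N$ and solve $\lambda_t=G(t,X^{\lambda}_t)$. By the uniqueness in Lemma \ref{lem:uniq} and Remark \ref{remark:existence}, which applies because $G$ is bounded and continuous in $t$ when $f$ is bounded above and below, together with Remark \ref{remark:predictability}, the restriction of the original process $(X_t)_{t\le aT}$ and its intensity coincide with the F\"ollmer process for $g$.

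Third, I check admissibility directly as a sanity check. Lemma \ref{lem:process}(1) gives $X_{aT}\sim P_{T-aT}f\,\dd\pi_{aT}=g\,\dd\pi_{aT}$, which is the law of $T_a(X)$. Thus $(\lambda_t)_{t\le aT}$ is admissible in \eqref{variational_thin}, and, being the F\"ollmer intensity for $g$, it is the minimizer. This yields $H(T_a(X)|\pi_{aT})=\E\int_0^{aT}\phi(\lambda_t)\,\dd t$.

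The main obstacle is the identification step. The equality in \eqref{variational_thin} is known only for the F\"ollmer drift of the target density, so I must ensure that the uniqueness argument applies on the shorter horizon. This is the reason for the assumption that $f$ is bounded away from $0$ and $\infty$: it makes $G$ bounded, so the fixed-point uniqueness of Lemma \ref{lem:uniq} is available. The edge case $a=0$ is trivial, since both sides vanish because $T_0(X)=0=\pi_0$.
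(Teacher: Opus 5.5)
Your proposal is correct and follows the paper's own argument. You identify the law of $T_a(X)$ as $P_{T(1-a)}f\,\dd\pi_{aT}$, use $P_{aT-t}P_{T(1-a)}=P_{T-t}$ to see that the two feedback maps agree on $[0,aT]$, and invoke the fixed-point uniqueness behind Lemma~\ref{lem:uniq} to get $\lambda^{a}_t=\lambda_t$ for $t\le aT$, with Lemmas~\ref{lem:process} and~\ref{lem:law} confirming $X_{aT}\overset{d}{=}T_a(X)$; your explicit checks of the semigroup identity and of the boundedness of $P_{T(1-a)}f$ just fill in details the paper leaves implicit.
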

\begin{proof}
The case $a=0$ is trivial, while the case $a=1$ follows from \eqref{eq:variational} and \eqref{eq:minimizer}. Let $X \sim f \dd \pi_{T}$ and $a \in (0,1).$ 
Denote by $\lambda_{t}^{a}, X_{t}^{\lambda^{a}}$ the optimal stochastic intensity for $T_{a}(X)$ and its corresponding counting process. In view of Lemmas \ref{lem:law} and \ref{lem:uniq}, the process $\lambda^{a}$ is the unique process that satisfies 
\begin{equation}
     \label{eq:eq_for_minim}
    \lambda_{t}^{a} = \frac{P_{aT-t} P_{T(1-a)}f(X_{t}^{\lambda^{a}}+1)}{P_{aT-t} P_{T(1-a)}f(X_{t}^{\lambda^{a}})}=\frac{ P_{T-t}f(X_{t}^{\lambda^{a}}+1)}{ P_{T-t}f(X_{t}^{\lambda^{a}})}, \quad \forall 0\le t \le aT.
\end{equation}
By Lemma \ref{lem:uniq} and Remark \ref{remark:function_G}, $(\lambda_{t})_{0 \le t \le T}$ is the unique process satisfying \eqref{eq:eq_for_minim} on $[0,T].$ Thus, $\lambda_{t}^{a} = \lambda_{t}$ for all $0 \le t \le aT.$ From Lemmas \ref{lem:process} and \ref{lem:law}, we obtain $X_{aT}^{\lambda^{a}} = X_{aT}
\overset{d}{=} T_{a}(X),$ which allows us to conclude.
\end{proof}
\begin{remark}
    Under the assumptions that $f$ is log-concave, Proposition \ref{prosotion:3.3} remains valid; see Remark \ref{remark:existence}.
\end{remark}
We can now provide a simple proof of Yu’s Thinning Lemma \ref{thm:Yu}.
\begin{proof}
The cases $a=0$ and $a=1$ are trivial. We therefore assume $a \in (0,1).$ Let us first assume $X \sim f \dd \pi_{T}$ with $f$ separated from zero and infinity. Then, since the process $(\lambda_{t})_{t}$ is a martingale by Lemma \ref{lem:martingale}, and $\phi$ is convex, it follows that $(\phi(\lambda_{t}))_{t}$ is a submartingale. Hence, by Proposition \ref{prosotion:3.3},
\begin{displaymath}
    H(T_{a}(X)|\pi_{aT})=\E \int_{0}^{aT} \phi(\lambda_{t}) \dd t = a \E \int_{0}^{T} \phi(\lambda_{as}) \dd s \le a \E \int_{0}^{T} \phi(\lambda_{s}) \dd s = a  H(X|\pi_{T}).
\end{displaymath}
In the general case, we consider truncated densities
\begin{equation}
     \label{eq:densities}
    f_{k}:= \min \Big \{ k , \max \Big\{ f , \frac{1}{k} \Big\} \Big\}
\end{equation}
and $X_{k} \sim \frac{f_{k} \pi_{T}}{ \int_{\N} f_{k} \dd \pi_{T}}.$ Since $f_{k} \rightarrow f$ pointwise and $0 \le f_{k} \le f +1 ,$ with $f$ integrable with respect to $\pi_{T},$ the dominated convergence theorem implies 
\begin{displaymath}
    \int_{\N} f_{k} \dd \pi_{T} \longrightarrow \int_{N} f \dd \pi_{T}.
\end{displaymath}
By Lemma \ref{lem:law}, we obtain $T_{a}(X_{k}) \sim \frac{P_{T(1-a)}f_{k} \pi_{aT}}{\int_{N} f_{k} \dd \pi_{T}}.$ Note that the normalising constant is consistent with the semigroup property of \eqref{eq:semigroup}. Similarly, as above, an application of the dominated convergence theorem yields $P_{T(1-a)}f_{k}  \rightarrow P_{T(1-a)} f.$ Thus, $T_{a}(X_{k})$ converges weakly to $T_{a}(X).$ Assume without loss of generality that $H(X | \pi_{T}) < \infty$ and by lower semi-continuity of relative entropy, we conclude
\begin{align*}
     H \big( T_{a}(X) | \pi_{aT} \big)
     &\le \liminf_{k \rightarrow \infty}
      H \big( T_{a}(X_{k}) | \pi_{aT} \big) \\
      &\le a\liminf_{k \rightarrow \infty}
       H \big( X_{k} | \pi_{T} \big)
       = a H \big( X | \pi_{T} \big)
\end{align*}
where the last limit follows again by dominated convergence.
\end{proof}

The following lemma has already appeared in \cite[Lemma 3.3]{ARS}. Since it appears naturally in several arguments below, we provide its proof for completeness. 

\begin{lemma}
     \label{lem:Ito}
    Let $G$ be as in \eqref{eq:functG} and $h:\R \rightarrow \R$ differentiable function. Then, we have
    \begin{displaymath}
        \E \big[ h(\lambda_{t}) \big] -  h(\lambda_{0})  =\int_{0}^{t} \E \Big(  \lambda_{s} \big[ h(\lambda_{s} + DG(s,X_{s})) - h (\lambda_{s}) - h'(\lambda_{s})DG(s,X_{s}) \big]  \Big)   \dd s
    \end{displaymath}
\end{lemma}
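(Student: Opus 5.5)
This is an Itô formula for the pure-jump process $s\mapsto \lambda_s = G(s,X_s)$. The plan is to split the evolution of $h(\lambda_s)$ into a continuous drift part, coming from $\partial_t G$, and a jump part, coming from the jumps of $X$, and then take expectations using the martingale property of the compensated process in Lemma \ref{lem:process}(2).

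\textbf{Pathwise decomposition.} By Remark \ref{remark:predictability}, almost surely $X$ has finitely many jumps on $[0,t]$, each of size one. Call the jump times $\tau_1<\dots<\tau_m$. Between consecutive jumps $X_s$ is constant, so $s\mapsto \lambda_s = G(s,X_s)$ is differentiable there. Using \eqref{eq:form2}, its derivative is
\[
\partial_s G(s,X_s) = -G(s,X_s)\,DG(s,X_s) = -\lambda_s\, DG(s,X_s).
\]
At a jump time $\tau$, $X$ goes from $X_\tau$ to $X_\tau+1$, since the process is left-continuous. Hence $\lambda$ jumps from $G(\tau,X_\tau)$ to $G(\tau,X_\tau+1)=\lambda_\tau+DG(\tau,X_\tau)$. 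Summing the continuous increments and the jumps gives the pathwise identity
\[
h(\lambda_t)-h(\lambda_0) = -\int_0^t h'(\lambda_s)\lambda_s DG(s,X_s)\,\dd s + \int_{[0,t)} \big[h(\lambda_s+DG(s,X_s))-h(\lambda_s)\big]\,\dd X_s .
\]

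\textbf{Taking expectations.} The integrand of the jump integral, $H_s:=h(G(s,X_s)+DG(s,X_s))-h(G(s,X_s))$, is left-continuous and adapted, hence predictable. Since $\tilde X$ is a martingale by Lemma \ref{lem:process}(2), with $\dd \tilde X_s=\dd X_s-\lambda_s\,\dd s$, we have
\[
\E\int_0^t H_s\,\dd X_s=\E\int_0^t H_s\lambda_s\,\dd s.
\]
Equivalently, this is the compensator identity for $N$: $\E\int H_s\,\dd N=\E\int H_s\lambda_s\,\dd s$ for predictable $H$. Combining with the drift term and applying Fubini yields exactly the claimed formula.

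\textbf{Main obstacle.} The hard part is integrability: making $\int H\,\dd\tilde X$ a true martingale and justifying Fubini. When $f$ is bounded away from zero and infinity, $G$ and $DG$ are bounded on $[0,T]\times\N$. Then $\lambda$ takes values in a compact set, $h$ and $h'$ are bounded on that set (assuming $h$ is $C^1$ there), and $X_t$ has all moments because $\lambda$ is bounded. So a bounded predictable integrand against the compensated Poisson measure gives a genuine martingale. In the general case, I would either add the hypothesis implicitly used in \cite{ARS}, or use the same truncation \eqref{eq:densities} as in the proof of Yu's lemma and pass to the limit. I expect to state the lemma under boundedness and note that this is the setting in which it is applied.
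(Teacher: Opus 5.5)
Your proposal is correct and follows the paper's proof: a pathwise decomposition of $h(\lambda_s)$ into a drift part computed from \eqref{eq:form2} and a jump part integrated against $\dd X_s$, then writing $\dd X_s = \dd \tilde X_s + \lambda_s\,\dd s$ and using the martingale property from Lemma \ref{lem:process}. Your extra points tighten the argument without changing it: you note that the jump integrand is predictable, and you assume $G$ and $DG$ are bounded, which the paper leaves implicit and which holds in the bounded and ultra log-concave settings where the lemma is later applied.
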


\begin{proof}
The function $h \circ G (\cdot, k): [0,T] \rightarrow \R $ is continuous for all $k \in \N$ and the function $t \mapsto (h \circ G)(s, X_{s})$ is a piecewise absolutely continuous function in $s.$ Taking into account that $X_{s}$ is a sum of finitely many jumps, we get
\begin{align*}
h(\lambda_{t}) - h(\lambda_{0})
&= \big( h \circ G \big)(t,X_{t})
   - \big( h \circ G \big)(0,X_{0}) \\[6pt]
&= \int_{0}^{t}
    \partial_{s}(h \circ G)(s,X_{s}) \dd s
   + \int_{0}^{t}
    D(h \circ G)(s,X_{s}) \dd X_{s} \\[6pt]
&= \int_{0}^{t}
    h'\big(G(s,X_{s})\big)
    \partial_{s}\big(G(s,X_{s})\big)\dd s \\
&\quad
   + \int_{0}^{t}
    \Big[
        h\big(G(s,X_{s} + 1)\big)
        - h\big(G(s,X_{s})\big)
    \Big]
    \dd X_{s}^{\lambda} \\[6pt]
    &\overset{\eqref{eq:form2}}{=} \int_{0}^{t}
    \Big[
        -h'(\lambda_{s})\lambda_{s}
        D\big(G(s,X_{s})\big)
    \Big]\dd s \\
&\quad
   + \int_{0}^{t}
    \Big[
        h\big(\lambda_{s}+DG(s,X_{s})\big)
        - h(\lambda_{s})
    \Big]
    \dd X_{s} \\[6pt]
&= \int_{0}^{t}
    \lambda_{s}
    \Big[
        h\big(\lambda_{s} + DG(s,X_{s})\big)
        - h(\lambda_{s})
        - h'(\lambda_{s})DG(s,X_{s})
    \Big]\dd s \\
&\quad
   + \int_{0}^{t}
    \Big[
        h\big(\lambda_{s}+DG(s,X_{s})\big)
        - h(\lambda_{s})
    \Big]
    \dd \tilde{X}_{s}.
\end{align*}
The conclusion follows by taking expectations, since the compensated process is a martingale (see Lemma \ref{lem:process}).    
\end{proof}
We are now ready for the proof of Theorem~\ref{thm:main1.2}.
\begin{proof}
Applying Proposition \ref{prosotion:3.3}, the deficit becomes
\begin{align*}
\delta_{\mathrm{Thinn}}(f)
&:= a H(\mu | \pi_T) - H(T_a(\mu) | \pi_{aT}) \\
&= a \Big( \int_{0}^{T} \E \phi(\lambda_t)\,\dd t - \int_{0}^{T} \E \phi(\lambda_{at})\, \dd t \Big) \\
&= a \int_{0}^{T} \int_{at}^{t}
\partial_s \big( \E \phi(\lambda_s) \big)\, \dd s\, \dd t.
\end{align*}
A calculation shows that 
\begin{displaymath}
    \phi (y) - \phi(x) - \phi'(x) (y-x) = x \phi \Big( \frac{x}{y} \Big), \quad \forall x,y>0.
\end{displaymath}
Under the ultra log-concavity assumption, the functions $s \mapsto \lambda_{s} $ and $s \mapsto DG(s,X_{s})$ are bounded; see \cite[Corollary 2.4]{RS}. Moreover, they are continuous except at finitely many points. Applying Lemma \ref{lem:Ito} with $h(x)=\phi(x),$ it follows that the integrand on the right-hand side is continuous. Hence,
\begin{displaymath}
    \partial_s \big( \E \phi(\lambda_s) \big) = \E \Bigg[ \lambda_{s}^{2} \phi \Bigg( \frac{\lambda_{s} + DG(s,X_{s})}{\lambda_{s}} \Bigg) \Bigg].
\end{displaymath}
Next, note that
\begin{displaymath}
    DG(t,k) = \frac{P_{T-t}f(k+2)}{P_{T-t}f(k+1)} - \frac{P_{T-t}f(k+1)}{P_{T-t}f(k)} = \frac{P_{T-t}f(k+2)}{P_{T-t}f(k+1)} -G(t,k)
\end{displaymath}
so 
\begin{equation}
     \label{eq:calc}
    \frac{G(t,k) + DG(t,k)}{G(t,k)} = \frac{P_{T-t}f(k+2) P_{T-t}f(k)}{P_{T-t}f(k+1)^{2}} = e^{D^{2} \log P_{T-t}f(k)}.
\end{equation}
Recall that $\lambda_{s}=DG(s,X_{s}).$ Combining the above with Proposition \ref{proposition:ARS}, we obtain
\begin{align*}
\delta_{\mathrm{Thinn}}(f)
&= a \int_{0}^{T} \int_{at}^{t}
\partial_s \big( \E \phi(\lambda_s) \big)\, \dd s\, \dd t \\
&= a \int_{0}^{T} \int_{at}^{t}
\E \Bigg[
\lambda_s^{2}
\phi \Big(
e^{D^{2} \log P_{T-s}f(X_s)}
\Big)
\Bigg] \dd s\, \dd t \\
&\ge
a \int_{0}^{T} \int_{at}^{t}
\E \Bigg[
\lambda_s^{2}
\phi \Bigg(
\frac{1}{1 + \frac{f(0)}{f(1)} \lambda_s}
\Bigg)
\Bigg] \dd s\, \dd t \\
&=
a \int_{0}^{T} \int_{at}^{t}
\E \big[ \Theta_{\frac{f(0)}{f(1)}} ( \lambda_s ) \big]
\, \dd s\, \dd t
\end{align*}
where we used the fact that $\phi$ is decreasing on $(0,1].$ The function $z \mapsto \Theta_{c}(z)$ is readily verified to be convex; therefore, by Jensen’s inequality and Lemma \ref{lem:martingale} 
\begin{displaymath}
    \delta_{\mathrm{Thinn}}(f) \ge a(1-a)\frac{T^{2}}{2} \Theta_{\frac{f(0)}{f(1)}} \Bigg( \frac{\E X}{T} \Bigg).
\end{displaymath}
\end{proof}

\begin{remark}
    The stability estimate of Theorem \ref{thm:main1.2} ultimately reduces to obtaining a lower bound on the quantity $\partial_{s} \E \phi (\lambda_{s}),$  which was previously studied in \cite{ARS} in the context of the stability of Wu's inequality. Consequently, our argument relies heavily on the approach developed therein.
\end{remark}

\begin{lemma}
     \label{lem:derivative}
    Let $X$ be a random variable with density $f$ with respect to $\pi_{T}.$ Assume that $f$ is bounded away from zero and infinity and $H(X|\pi_{T}) < \infty.$ Then, for every $a$ in $[0,1],$ we have 
    \begin{displaymath}
        \frac{d}{da} \big(  H  ( T_{a}(X) | \pi_{aT} ) \big) = T \E \phi( \lambda_{aT} ).
    \end{displaymath}
    Moreover, $a \mapsto H\big( T_{a}(X) | \pi_{aT}  \big)$ is convex.
\end{lemma}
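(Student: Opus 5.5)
By Proposition \ref{prosotion:3.3}, under the standing assumption that $f$ is bounded away from zero and infinity, for every $a\in[0,1]$ we have
\begin{displaymath}
    H\big(T_a(X)|\pi_{aT}\big)=\E\int_0^{aT}\phi(\lambda_t)\,\dd t=\int_0^{aT}\E\phi(\lambda_t)\,\dd t ,
\end{displaymath}
where $(\lambda_t)_{0\le t\le T}$ is the single intensity \eqref{eq:minimizer} of the Poisson--F\"ollmer process for $X$. The crucial point is that the process does not depend on $a$: $a$ only enters through the upper limit of integration. The interchange of $\E$ and $\int$ is justified by Tonelli, since $\phi\ge 0$. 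So the whole proof reduces to regularity of $g(t):=\E\phi(\lambda_t)$ on $[0,T]$, followed by the fundamental theorem of calculus in the variable $a$.

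\emph{Step 1: $g$ is continuous.} Since $f$ is bounded away from zero and infinity, say $m\le f\le M$, the same bounds hold for $P_{T-t}f$. Hence $\lambda_t=G(t,X_t)\in[m/M,M/m]$ uniformly, and $DG$ is bounded too. Thus $\phi(\lambda_t)$ and all terms in Lemma \ref{lem:Ito} with $h=\phi$ are bounded. Lemma \ref{lem:Ito} then writes $g(t)-g(0)$ as the integral over $[0,t]$ of a bounded function, so $g$ is Lipschitz, in particular continuous. Alternatively, one can use that $\lambda$ is left continuous with finitely many jumps a.s. (Remark \ref{remark:predictability}) together with dominated convergence; this gives continuity of $t\mapsto \E\phi(\lambda_t)$ except possibly on a null set, and the Lemma \ref{lem:Ito} route is cleaner. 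This step is where I expect the only genuine care to be needed.

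\emph{Step 2: the derivative.} Set $\Phi(a):=\int_0^{aT}g(t)\,\dd t$. Since $g$ is continuous, the fundamental theorem of calculus and the chain rule give
\begin{displaymath}
    \Phi'(a)=T\,g(aT)=T\,\E\phi(\lambda_{aT}),
\end{displaymath}
with one-sided derivatives at $a=0,1$.

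\emph{Step 3: convexity.} It suffices that $a\mapsto g(aT)$ is nondecreasing. By Lemma \ref{lem:martingale}, $(\lambda_t)$ is a martingale, and $\phi$ is convex. Hence $\phi(\lambda_t)$ is a submartingale (it is integrable since it is bounded), so $t\mapsto\E\phi(\lambda_t)$ is nondecreasing. Therefore $\Phi'$ is nondecreasing on $[0,1]$, and $\Phi$ is convex.

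This is the same submartingale mechanism used in the proof of Theorem \ref{thm:Yu}, and in fact convexity together with $\Phi(0)=0$ recovers Yu's inequality.
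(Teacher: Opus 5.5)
Your proof is correct and follows the paper's: the representation from Proposition~\ref{prosotion:3.3} with an $a$-independent intensity, the fundamental theorem of calculus in $a$, and convexity from the submartingale property of $\phi(\lambda_t)$. The only difference is how you get continuity of $t\mapsto \E\phi(\lambda_t)$. You use Lemma~\ref{lem:Ito}, which gives a Lipschitz bound and is equally valid here. The paper uses the route you set aside: for each fixed $t$ the process a.s.\ has no jump at $t$, so bounded convergence gives continuity at every $t$, not merely off a null set.
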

\begin{proof} 
For every $t \in [0,T],$ the Poisson point process $N$ (and therefore $(X_{t})_{t \in [0,T]}$) almost surely has no jump at time $t.$ Consequently, $\lambda_{t}$ is almost surely continuous at $t,$ and the boundedness assumption implies the continuity of the map 
\begin{equation}
      \label{eq:continuity}
    t \mapsto \E \phi (\lambda_{t}).
\end{equation}
By Fubini's theorem we have
\begin{displaymath}
    H( X |\pi_{T} ) = \int_{0}^{T} \E \phi (\lambda_{t}) \dd t < \infty
\end{displaymath}
and as a result
\begin{displaymath}
    t \mapsto \E \phi (\lambda_{t})
\end{displaymath}
is integrable on $[0,T].$ By Proposition \ref{prosotion:3.3} for every $a \in [0,1],$ we have
\begin{equation}
    \label{eq:deriv}
    \frac{d}{da} \Big( H(T_{a}(X)|\pi_{aT}) \Big) = \frac{d}{da} \Big( \int_{0}^{aT} \E(\phi(\lambda_{s})) ds \Big) = T \E( \phi(\lambda_{aT})).
\end{equation}
The moreover part follows from the submartingale property of $(\phi(\lambda_{t}))_{t}.$ The monotonicity of the derivative of $a \mapsto H \big( T_{a}(X)  | \pi_{aT} \big)$ implies that the function is convex.
\end{proof}

\begin{remark}
    The convexity of the thinning operation was proved in \cite{Y}; more precisely, it follows from the proof of \cite[Lemma 1]{Y}. We return to the derivative of the thinning operation in Subsection \ref{sss:size_bias}.
\end{remark}

\begin{remark}
     \label{remark:convexity}
    Let $f$ be bounded away from zero and infinity. The convexity of 
    \begin{displaymath}
        a \mapsto H \big( T_{a} (X) | \pi_{aT} \big) 
    \end{displaymath}
    is stronger than Wu's inequality (see \cite{W}). This observation was already made by Yu in \cite[Theorem 4]{Y}. We adapt his argument to the present setting. For every $a \in (0,1)$ we have by convexity
    \begin{equation}
         \label{eq:limit_Yu}
        \frac{H(T_{a}(X) | \pi_{aT} )}{a} \le \frac{d}{da} \big( H(T_{a}(X) | \pi_{aT}) \big) = T \E \phi (\lambda_{aT} )
    \end{equation}
    and by taking $a \rightarrow 1^{-},$ \eqref{eq:limit_Yu} yields
    \begin{displaymath}
        H \big( X|\pi_{T} \big) \le T \E \phi(\lambda_{T}) = T \E_{\pi_{T}} \big[ \Psi (f,Df) \big] ,
    \end{displaymath}
    where we used \eqref{eq:calculation} and $X_{T} \sim f \dd \pi_{T}.$
\end{remark}

\subsection{Fisher Information of $a$--thinning}
\label{sss:Fisher_information}

\begin{remark}
    The Fisher Information defined in \eqref{eq:FI} can be expressed via the Poisson--F\"ollmer process as follows. Recall that
    \begin{displaymath}
        \lambda_{T} = \frac{f(X_{T} + 1)}{f(X_{T})}
    \end{displaymath}
    and by Lemma \ref{lem:process} $X_{T}  \sim f \dd \pi_{T}.$ Thus, we can write
    \begin{equation}
         \label{eq:FI_stoch}
        I(X | \pi_{T} ) = T \int_{\N} \Big( \frac{Df}{f} \Big)^{2} f \dd \pi_{T} = T \E \big( (\lambda_{T} - 1 )^{2} \big).
    \end{equation}
    Similarly, we may express the Fisher Information of the thinned random variable. Indeed, we recall    
    \begin{displaymath}
        \lambda_{aT} = \frac{P_{T(1-a)}f(X_{aT} + 1) }{P_{T(1-a)} f(X_{aT}) }
    \end{displaymath}
    and from Lemmas \ref{lem:process} and \ref{lem:law}, it follows that $T_{a}(X) \overset{d}{=} X_{aT},$ thus as above
    \begin{equation}
         \label{eq:FI_scaled_stoch}
        I(T_{a} (X) | \pi_{aT} ) =  aT \E \big( (\lambda_{aT} - 1 )^{2} \big).
    \end{equation}

\end{remark}
We now proceed to the proof of Theorem \ref{thm:FIsc}.
\begin{proof}
The cases $a=0$ and $a=1$ are trivial. We therefore assume $a \in (0,1).$ First assume that $f$ is separated from zero and infinity and then
\begin{displaymath}
    I(T_{a}(X) | \pi_{aT}) \overset{\eqref{eq:FI_scaled_stoch}}{=}  aT \E \big( (\lambda_{aT} - 1)^{2} \big) \le a T \E \big( ( \lambda_{T} - 1)^{2} \big) \overset{\eqref{eq:FI_stoch}}{=} a I(X | \pi_{T})
\end{displaymath}
due to the martingale property of $\lambda_{t}.$ In the general case, we may assume without loss of generality that $I(X |\pi_{T}) < \infty.$ Considering the truncated densities introduced in \eqref{eq:densities} and by Lemma \ref{lem:law}, $T_{a}(X) \sim P_{T(1-a)} f \dd \pi_{T}.$ Thus, we obtain
\begin{equation}
\label{eq:approximation_FI}
    \int_{\N} \frac{(DP_{T(1-a)}f_{k})^{2}}{P_{T(1-a)}f_{k}} \dd \pi_{aT} \le a \int_{\N} \frac{(Df_{k})^{2}}{f_{k}} \dd \pi_{T} .
\end{equation}
by the previous case. The right-hand side of \eqref{eq:approximation_FI} converges to $\int_{\N} \frac{(Df)^{2}}{f} \dd \pi_{T}$ by the dominated convergence theorem. To apply the dominated convergence theorem, we distinguish three cases. If $\frac{1}{k} \le f \le k,$ then $f_{k} = f,$ and consequently $\frac{(Df_{k})^{2}}{f_{k}} \le \frac{(Df)^{2}}{f}.$ The case $f < \frac{1}{k} = f_{k}$ can be treated similarly. Finally, if $f_{k} = k < f,$ then $\frac{(Df_{k})^{2}}{f_{k}} \le \frac{k^{2}}{f_{k}} = k < f,$ which is integrable under the measure $\pi_{T}.$ Moreover, using the bound $0 \le f_{k} \le f + 1$ and the integrability of $f,$ we obtain by dominated convergence theorem 
\begin{displaymath}
    P_{T(1-a)}f_{k} \longrightarrow P_{T(1-a)}f,
    \quad
    DP_{T(1-a)}f_{k} \longrightarrow DP_{T(1-a)}f,
\end{displaymath}
pointwise on $\N$. Therefore, the integrand on the left-hand side of \eqref{eq:approximation_FI} converges pointwise. Since it is nonnegative, Fatou's lemma allows us to conclude that
\begin{align*}
       \int_{\N} \frac{\big(DP_{T(1-a)}f\big)^{2}}{P_{T(1-a)}f} \dd \pi_{aT}
       &\le \liminf_{k \rightarrow \infty}
       \int_{\N} \frac{\big(DP_{T(1-a)}f_{k}\big)^{2}}{P_{T(1-a)}f_{k}} \dd \pi_{aT} \\
       &\overset{\eqref{eq:approximation_FI}}{\le} a \liminf_{k \rightarrow \infty}
       \int_{\N} \frac{(Df_{k})^{2}}{f_{k}} \dd \pi_{T} \\
     &= a \int_{\N} \frac{(Df)^{2}}{f} \dd \pi_{T}.
\end{align*}
\end{proof}

We are now ready for the proof of Theorem~\ref{thm:main1.4}.
\begin{proof}
From equations \eqref{eq:FI_stoch} and \eqref{eq:FI_scaled_stoch}, the deficit becomes
\begin{align*}
\delta_{\mathrm{FI}}(f)
&:= a I(\mu |  \pi_T) - I (T_a(\mu) | \pi_{aT}) \\
&= a T \Big( \E ( \lambda_{T} - 1)^{2} - \E ( \lambda_{aT} - 1)^{2}  \Big) = a T \Big( \E  \lambda_{T} ^{2} - \E  \lambda_{aT} ^{2}  \Big) 
\end{align*}
where we used the fact that $\E \lambda_{s}$ is constant; see Lemma \ref{lem:martingale}. Applying Lemma \ref{lem:Ito} for $h(x)=x^{2}$, we obtain
\begin{equation}
\label{eq:derivative_sq}
\begin{aligned}
\delta_{\mathrm{FI}}(f)
&= a T \int_{aT}^{T} \E \Big[ \lambda_s 
\big( DG(s,X_s) \big)^2 \Big] \dd s.
\end{aligned}
\end{equation}
Now, combining \eqref{eq:calc} and Proposition \ref{proposition:ARS}, we get
\begin{displaymath}
    1 + \frac{ DG(s,X_{s})}{G(s,X_{s})} = e^{D^{2} F(s,X_{s})} \le \frac{1}{1 + \frac{f(0)}{f(1)} \lambda_{s}} < 1.
\end{displaymath}
Recall that $\lambda_{s} = G (s,X_{s})$ and therefore, after rearranging terms,
\begin{displaymath}
    DG(s,X_{s}) \le \lambda_{s} \Bigg( \frac{-\frac{f(0)}{f(1)} \lambda_{s}}{1 + \frac{f(0)}{f(1)} \lambda_{s}} \Bigg) 
\end{displaymath}
which, since the right-hand side is negative, implies that
\begin{equation}
     \label{eq:ineq}
    DG(s,X_{s}) ^{2} \ge \lambda_{s}^{4} \Bigg( \frac{\frac{f(0)}{f(1)} }{1 + \frac{f(0)}{f(1)} \lambda_{s}} \Bigg)^{2}.
\end{equation}
Combining \eqref{eq:derivative_sq} with \eqref{eq:ineq} yields
\begin{align*}
\delta_{\mathrm{FI}}(f)
&= a T \int_{aT}^{T} \E \Big[ \lambda_s 
\big( DG(s,X_s) \big)^2 \Big] \dd s \\
&\ge a T \int_{aT}^{T} \E \big[ 
\Delta_{\frac{f(0)}{f(1)}} 
( \lambda_s )  \big] \, \dd s .
\end{align*}
Since $z \mapsto \Delta_{c}(z)$ is convex, as follows from the positivity of its second derivative, Jensen’s inequality and Lemma \ref{lem:martingale} imply
\begin{displaymath}
    \delta_{\mathrm{FI}}(f)\ge a(1-a)T^2 \Delta_{\frac{f(0)}{f(1)}} \Bigg( \frac{\E X}{T} \Bigg).
\end{displaymath}
\end{proof}

\begin{remark}
\label{remark:comparison}
Let us compare the above estimate with \eqref{eq:log_conc} of Harrem\"oes, Johnson and Kontoyiannis who proved it under the assumption that $f$ is log-concave. In view of Remark \ref{rem:definitions}, we set $T = \E X,$ and Theorem \ref{thm:main1.4} gives  
\begin{equation}
    \label{eq:ULC_estimate}
    I \big( T_{a} (X) | \pi_{aT} \big) \le a \Big( I(X | \pi_T) - T^{2} C^{2} \Big) + a^{2} T^{2}C^{2}
\end{equation}    
where $C := \frac{f(0)}{f(0) + f(1)}.$ Upon rearranging terms, we obtain $\forall a \in (0,1),$
\begin{align*}
a \Big( I(X | \pi_T) - T^{2} C^{2} \Big) + a^{2} T^{2}C^{2}
&< a^{2} I(X | \pi_T) \\
&\iff 0 < a(1-a)
\Big( T^{2}C^{2} - I(X | \pi_T) \Big) \\
&\iff I(X | \pi_T) < T^{2} C^{2}.
\end{align*}
so our bound is smaller if and only if
\begin{equation}
     \label{eq:comparison} 
    \E_{\pi_{T}} \Bigg( \frac{(Df)^{2}}{f}  \Bigg) < T C^{2} .
\end{equation}
We are now going to present a class of random variables
for which our estimate satisfies the condition \eqref{eq:comparison}. Let $t > 0$ and consider the family of measures
\begin{displaymath}
    \p (X =k) = \frac{t^{k}}{(k!)^{2}} \frac{1}{Z(t)},
\end{displaymath}
where $Z(t):= \sum_{k\ge 0} \frac{t^{k}}{(k!)^{2}}$ is the normalising constant. We introduce the functions $ \alpha(t) := \frac{Z(t)}{Z'(t)},$ $T(t):= \frac{t}{\alpha(t)}$ and we may sometimes suppress their dependence on $t$ for notational convenience. Then, we obtain $\E X = \frac{t Z'(t)}{Z(t)} = \frac{t}{\alpha(t)}= T(t) $ and the density function of $X$ with respect to $\pi_{T}$ is given by 
\begin{displaymath}
    f(k) = \frac{\p (X=k)}{\pi_{T}(k)} = \frac{\alpha^{k}}{k!} \frac{e^{T}}{Z(\alpha T)}  ,
\end{displaymath}
which satisfies the \textit{ultra log-concavity} definition. Note that $\p (X = k) = \frac{(k+1)^{2}}{t} \p (X = k + 1)$ and this implies
    \begin{displaymath}
          \E \Big( \frac{\alpha}{X + 1} \Big) = \sum_{k=0}^{\infty} \frac{\alpha}{k+1} \p(X=k) = \frac{\alpha}{ t} \sum_{k=0}^{\infty} (k+1) \p(X=k+1) = 1.
    \end{displaymath}
Therefore, the left-hand side of inequality \eqref{eq:comparison} can be expressed as  
     \begin{equation}
          \label{eq:left_hand_side}
           L(t):= \sum_{k=0}^{\infty} \Big( \frac{\alpha}{k+1} - 1\Big)^{2} \p(X=k) = \E \Big[ \Big( \frac{\alpha}{X+1}-1 \Big)^{2} \Big] = \alpha^{2} \E \Big[  \frac{1}{(X+1)^{2}} \Big] - 1.
    \end{equation}
Moreover,
    \begin{displaymath}
         \E \Big[  \frac{1}{(X+1)^{2}} \Big] = \frac{1}{Z(t)} \sum_{k=0}^{\infty} \frac{1}{(k+1)^{2}} \frac{t^{k}}{(k!)^{2}} = \frac{1}{t Z(t)} \sum_{k=0}^{\infty}  \frac{t^{k+1}}{((k+1)!)^{2}} = \frac{Z(t)-1}{t Z(t)}
    \end{displaymath}
and consequently, equation \eqref{eq:left_hand_side} is upper bounded by 
    \begin{equation}
          \label{eq:upper_bound_example}
          L(t) = \alpha^{2} \Big( \frac{Z(t)-1}{t Z(t)} \Big) - 1 < \frac{\alpha - T }{ T }.
    \end{equation}
The power series $Z(t)$ satisfies the following differential equation, obtained by differentiating the series term by term and comparing coefficients:
    \begin{displaymath}
        t Z''(t) + Z'(t) - Z(t) = 0, \quad \text{ with Z(0) = 1.} 
    \end{displaymath}
Then, the ratio $\alpha(t)= \frac{Z(t)}{Z'(t)}$ satisfies the first-order non linear Riccati equation
    \begin{equation}
        \label{eq:Riccati}
        \alpha'(t) = 1 + \frac{\alpha(t)}{t} - \frac{\alpha^{2}(t)}{t} , \quad \text{ with $\alpha$(0) = 1.}
    \end{equation}
For the comparison function $y(t):= \frac{1}{4} + \sqrt{t + \frac{9}{16}},$ with initial condition $y(0)=1$ and $y'> 1 +\frac{y}{t} - \frac{y^{2}}{t},$ we obtain $\alpha(t) < y(t) $ for all times $t > 0$ and 
\begin{displaymath}
    \alpha'(t) > 0 \iff \alpha^{2}(t) - \alpha (t) - t < 0 \iff \alpha (t) < \frac{1}{2} + \sqrt{t + \frac{1}{4}}. 
\end{displaymath}
Since $\alpha (t) < y(t) < \frac{1}{2} + \sqrt{t + \frac{1}{4}}$ for all $t > 0,$ the function $\alpha$ is strictly increasing. Moreover, $\alpha$ is unbounded from above since it satisfies \eqref{eq:Riccati}, hence it takes arbitrary values greater than one. Fix $t_{*}>0$ such that $\alpha (t_{*}) = 6$ and let $t \ge t_{*}.$ Hence, the inequality $ \alpha(t) < \frac{1}{4} + \sqrt{t + \frac{9}{16}}$ implies that $\big( \alpha (t) - \frac{1}{4} \big)^{2} < t +\frac{9}{16}$ and, in particular, yields the estimates $\alpha - T < \frac{\alpha + 1}{2 \alpha}$ and $\frac{2 \alpha^{2} - \alpha - 1}{2 \alpha} < T.$ Finally, from inequality \eqref{eq:upper_bound_example},   
    \begin{displaymath}
        L(t) < \frac{\alpha - T}{ T} < \frac{\alpha + 1}{2\alpha T} < \frac{\alpha + 1}{2 \alpha^{2} - \alpha - 1} \le \frac{2 \alpha^{2} - \alpha - 1}{2 \alpha (\alpha + 1)^{2} } < \frac{T}{(\alpha + 1)^{2}} := R(t)
    \end{displaymath}
where $R(t)$ is the right-hand side of inequality \eqref{eq:comparison} and the only inequality involving only $\alpha$ is a calculus estimate valid for $\alpha (t) \ge 6 .$ Here, the choice of the constant 6 could be slightly improved.

\end{remark}

\subsection{Law of Thin Numbers and Convergence Rates}
\label{sss:convrates}
For this subsection, we assume that $T=\E X$ and so by Lemma \ref{lem:martingale} we have that $\lambda_{0}=\E (\lambda_{t}) =1.$ First, we give a proof of the Law of Thin Numbers in Theorem ~\ref{thm:law}.

\begin{proof}
Let $X$ have density $f$ with respect to $\pi_{T}.$ We first consider the case in which $f$ is bounded away from zero and infinity. The data processing inequality (see \cite[Lemma 1.3.11]{CK}) yields, for $S(x_{1},...,x_{n}):= \sum_{i=1}^{n} x_{i},$ 
\begin{align}
\label{eq:datapr}
H \Big( \sum_{i=1}^{n} T_{1/n}(X_{i}) \Big| \pi_{T} \Big)
&= H \Big( S\big( T_{1/n}(X_{1}),\dots, T_{1/n}(X_{n}) \big)
\Big| S(\pi_{T/n},\dots,\pi_{T/n}) \Big) \\
&\le H \Big( (T_{1/n}(X_{1}),\dots,T_{1/n}(X_{n}))
\Big| (\pi_{T/n},\dots,\pi_{T/n}) \Big) \notag \\
&= \sum_{i=1}^{n} H\big( T_{1/n}(X_{i}) \mid \pi_{T/n} \big)
= n H\big( T_{1/n}(X)\mid\pi_{T/n} \big). \notag
\end{align}
where in the last two equalities we used the independence and identical distribution of the random variables $X_{i}.$ In view of \eqref{eq:datapr}, it remains to prove that
\begin{displaymath}
    \frac{H (T_{a}(X) | \pi_{aT} ) }{a } \longrightarrow 0, \quad a \rightarrow 0^{+}.
\end{displaymath}
Using Proposition \ref{prosotion:3.3} together with the submartingale property of $(\phi(\lambda_{t}))_{t},$ we obtain 
\begin{equation}
     \label{eq:before_limit}
    H (T_{a}(X) | \pi_{aT} ) = \E \int_{0}^{aT} \phi (\lambda_{t}) \dd t \le a T \E \phi( \lambda_{aT})  
\end{equation}
and therefore
\begin{displaymath}
     \lim_{a \rightarrow 0^{+}} \frac{H (T_{a}(X) | \pi_{aT} ) }{a } \le T \lim_{a \rightarrow 0^{+}} \E \phi( \lambda_{aT}) = T \phi (1) = 0.
\end{displaymath}
To treat the general case, we consider the densities $f_{k}$ defined in \eqref{eq:densities} and the corresponding random variables $Y_{k} \sim \frac{f_{k} \pi_{T}}{ \int_{\N} f_{k} \dd \pi_{T}}.$ Applying \eqref{eq:calculation} to the estimate \eqref{eq:before_limit} and using the definition of $\lambda_{aT},$ the previous case yields, for every $k\in\N,$
\begin{displaymath}
    H ( T_{a} (Y_{k}) | \pi_{aT} ) \le  \frac{aT}{\int_{\N} f_{k} \dd \pi_{T}}  \E_{\pi_{aT}} \Psi \big( P_{T(1-a)} f_{k} , DP_{T(1-a)} f_{k} \big).
\end{displaymath}
We also used the 1-homogeneity of $\Psi,$ which implies that the normalizing constant in the density of $Y_k$ factors out. By the dominated convergence theorem, using the bound $0 \le f_{k} \le f + 1$ and the integrability of $f,$ we obtain
\begin{displaymath}
    \int_{\N} f_{k} \dd \pi_{T} \longrightarrow \int_{N} f \dd \pi_{T}.
\end{displaymath}
Since $T_{a}(Y_{k})$ converges weakly to $T_{a}(X),$ combining the lower-semi continuity of relative entropy with Corollary \ref{prop:appendix_bound2} from the Appendix, we deduce
\begin{align*}
      H \big( T_{a}(X) | \pi_{aT} \big)
      &\le \liminf_{k \rightarrow \infty}
      H \big( T_{a}(Y_{k}) | \pi_{aT} \big) \\
      &\le \liminf_{k \rightarrow \infty}
      \frac{aT}{\int_{\N} f_{k} \dd \pi_{T}} \E_{\pi_{aT}} \Psi \big( P_{T(1-a)} f_{k} , DP_{T(1-a)} f_{k} \big) \\
      &\ = a T \E_{\pi_{aT}} \Psi \big( P_{T(1-a)} f , DP_{T(1-a)} f \big).
\end{align*}
To complete the proof, divide by $a$ and pass to the limit
\begin{displaymath}
     \lim_{a \rightarrow 0^{+}} \frac{H (T_{a}(X) | \pi_{aT} ) }{a } \le T \lim_{a \rightarrow 0^{+}} \E_{\pi_{aT}}  \Psi\big( P_{T(1-a)}f, DP_{T(1-a)}f \big) = T \Psi (1,0) = 0 .
\end{displaymath}
Passing to the limit is justified by Vitali's theorem and we also used the identity $P_{T}f(1) = 1,$ whenever $T = \E X.$ Indeed, by Corollary \ref{corollary:last_one}, for every $b < 1,$ the family
\begin{displaymath}
    \Big(  \Psi \big( P_{T(1-a)} f , DP_{T(1-a)} f \big) \Big)_{0 \le a \le b} 
\end{displaymath}
is uniformly integrable with respect to the corresponding family of measures $(\pi_{a})_{0 \le a \le b}.$  
\end{proof}
We will use the following subadditivity principle of Fisher information proved in \cite[Proposition 3]{HJK3}, in its special case of all random variables being identically distributed.
\begin{proposition}
     [Harrem\"oes, Johnson and Kontoyiannis \cite{HJK3}]
    Let $(X_{i})_{1 \le i \le n}$ be independent $\N$--valued random variables. Then, we have
    \begin{displaymath}
        I \Bigg( \sum_{i=1}^{n} X_{i} \Big| \pi_{\lambda} \Bigg) \le \sum_{i=1}^{n} \frac{\E X_{i}}{\lambda} I \Big( X_{i} \big| \pi_{\E X_{i} } \Big) 
    \end{displaymath}
    where $\lambda := \sum_{i=1}^{n} \E X_{i}  .$
\end{proposition}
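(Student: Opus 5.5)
The plan is to reduce to the scaled Fisher information of Remark \ref{rem:definitions} and to use the classical projection (conditional expectation) argument for score functions. Write $T_i := \E X_i$, $\rho^{(i)}$ for the law of $X_i$, and $S := \sum_{i=1}^n X_i$. Then $\E S = \lambda$. By Remark \ref{rem:definitions}, applied with $T = \E X$ in each case, we have $I(X_i|\pi_{T_i}) = K(X_i) = T_i\,\E[\rho_{X_i}(X_i)^2]$ and $I(S|\pi_\lambda) = K(S) = \lambda\,\E[\rho_S(S)^2]$. So it suffices to prove
\begin{displaymath}
\lambda\,\E\big[\rho_S(S)^2\big] \le \sum_{i=1}^n \frac{T_i^2}{\lambda}\,\E\big[\rho_{X_i}(X_i)^2\big].
\end{displaymath}
We may assume every $T_i>0$, since variables with $T_i=0$ are a.s.\ $0$ and can be dropped. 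We may also assume the right-hand side is finite.

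\textbf{Step 1 (score of a sum as a conditional expectation).} I would prove that, for every $k$ with $\p(S=k)>0$,
\begin{displaymath}
\rho_S(k) = \E\Big[\sum_{i=1}^n \frac{T_i}{\lambda}\,\rho_{X_i}(X_i)\,\Big|\, S=k\Big].
\end{displaymath}
By induction it is enough to treat $n=2$. Write $p_S(k+1) = \sum_{j} \rho^{(1)}(j)\rho^{(2)}(k+1-j)$ and split the factor $k+1 = j + (k+1-j)$. After a shift of the summation index, this gives
\begin{displaymath}
(k+1)p_S(k+1) = \sum_{j+l=k}\Big[(j+1)\rho^{(1)}(j+1)\rho^{(2)}(l) + \rho^{(1)}(j)(l+1)\rho^{(2)}(l+1)\Big].
\end{displaymath}
Substituting $(j+1)\rho^{(1)}(j+1) = T_1\rho^{(1)}(j)\big(1+\rho_{X_1}(j)\big)$ and the analogous identity for $X_2$, then dividing by $\lambda\,p_S(k)$ and using $T_1+T_2=\lambda$, yields the claimed identity. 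At points where some $\rho^{(i)}(j)=0$, the convention $\frac{0}{0}=0$ has to be checked so that the substitution stays valid. This bookkeeping is the only delicate part of the step.

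\textbf{Step 2 (zero mean and conditional Jensen).} Each score is centred: $\E\rho_{X_i}(X_i) = \frac{1}{T_i}\sum_k (k+1)\rho^{(i)}(k+1) - 1 = 0$. Conditional Jensen applied to Step 1 gives $\E[\rho_S(S)^2] \le \E\big[\big(\sum_i \frac{T_i}{\lambda}\rho_{X_i}(X_i)\big)^2\big]$. By independence and centring, the cross terms vanish, so the right-hand side equals $\sum_i \frac{T_i^2}{\lambda^2}\E[\rho_{X_i}(X_i)^2]$. Multiplying by $\lambda$ gives exactly the reduced inequality above, and translating back through Remark \ref{rem:definitions} gives the proposition.

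I expect the main obstacle to be Step 1. The issue is not the algebra but its justification: each $\rho_{X_i}(X_i)$ must be square integrable, which follows from finiteness of $K(X_i)$, so that the conditional expectation and the exchange of sums are legitimate. In addition, support issues must be handled when some $\rho^{(i)}$ vanish, in order to make the identity hold $\p$-a.s.\ on $\{S=k\}$.
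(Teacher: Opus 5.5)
The paper gives no proof of this proposition; it cites \cite[Proposition 3]{HJK3}. Your argument is the standard score-projection proof from that source: the score of the sum is a conditional expectation of the weighted scores, and then conditional Jensen plus independence and centring finish. The structure is right. However, the point you defer as ``bookkeeping with the convention $\frac{0}{0}=0$'' is not bookkeeping, and your reduction as written is to a false inequality.

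The identity $K=I$ of Remark~\ref{rem:definitions} only holds when the law has no gap, i.e.\ when $\p(X=j)=0$ implies $\p(X=j+1)=0$. If $f(j)=0<f(j+1)$, then $I(X|\pi_T)=+\infty$ through the term $f(j+1)^2/f(j)$. By contrast, $K(X)$ can stay finite, because $\E[\rho_X(X)^2]$ ignores null atoms. For such laws both of your steps fail:
\begin{itemize}
\item At such a $j$, the substitution $(j+1)\p(X_i=j+1)=T_i\,\p(X_i=j)\bigl(1+\rho_{X_i}(j)\bigr)$ is false for every value of $\rho_{X_i}(j)$, so no convention rescues it.
\item The centring computation $\E\rho_{X_i}(X_i)=\frac{1}{T_i}\sum_k (k+1)\p(X_i=k+1)-1$ silently includes atoms $(j+1)\p(X_i=j+1)$ that the expectation does not see.
\end{itemize}
Concretely, take $X_1,X_2$ i.i.d.\ uniform on $\{0,2\}$. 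Then $T_i=1$ and $\rho_{X_i}\equiv -1$ on the support, so $\E\rho_{X_i}(X_i)=-1$ and $K(X_i)=1$. The sum $S$ is supported on $\{0,2,4\}$ with $\rho_S\equiv-1$ there, so $K(S)=2>1=\sum_i \frac{T_i}{\lambda}K(X_i)$. Thus ``it suffices to prove the $K$-inequality, and we may assume its right-hand side is finite'' is not a valid reduction.

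The fix is short, but it must use the paper's definition of $I$, not $K$. Assume $\sum_i \frac{T_i}{\lambda} I(X_i|\pi_{T_i})<\infty$. Since $\pi_{T_i}(j)>0$ for all $j$ and $c/0=+\infty$ for $c>0$, this forces each $X_i$ with $T_i>0$ to be supported on an initial segment $\{0,\dots,N_i\}$ with $N_i\le\infty$. Then:
\begin{itemize}
\item $S$ is supported on an initial segment as well, so $K=I$ holds for $S$ and for every $X_i$.
\item If you define $\rho_{X_i}(j)$ arbitrarily (say $-1$) off the support, both sides of your substitution vanish there, and the centring $\E\rho_{X_i}(X_i)=0$ is correct.
\end{itemize}
With this in place, Step~1 is a finite-sum identity for each $k$ with $\p(S=k)>0$, so no interchange of sums is needed. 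Square integrability comes from $K(X_i)=I(X_i|\pi_{T_i})<\infty$, and Step~2 goes through as you wrote it. For the induction in Step~1, also record that $\E\bigl[\E[Z|S_{n-1}]\,\big|\,S_n\bigr]=\E[Z|S_n]$, which follows from the independence of $X_n$ and $(X_1,\dots,X_{n-1})$. Alternatively, run the same computation directly on the $n$-fold convolution.
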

In particular, if $X_{1}, X_{2}, . . . , X_{n}$ are i.i.d. random variables with mean $T$ then the above bound implies, 
\begin{equation}
\label{eq:subadd}
\begin{aligned}
H \Big( \sum_{i=1}^{n} T_{1/n}(X_{i}) \,\big|\, \pi_{T} \Big)
&\le I \Big( \sum_{i=1}^{n} T_{1/n}(X_{i}) \,\big|\, \pi_{T} \Big) \\
&\le \frac{1}{n}\sum_{i=1}^{n} I \Big( T_{1/n}(X_{i}) \,\big|\, \pi_{T/n} \Big)
= I \big( T_{1/n}(X) \,\big|\, \pi_{T/n} \big)
\end{aligned}
\end{equation}
where the first inequality is a consequence of the definition of Fisher information \eqref{eq:FI} and the Bobkov--Ledoux estimate \eqref{eq:wu}. Thus, we combine our estimates for the Fisher Information of $a$--thinning $I(T_{a}(X) | \pi_{aT})$ to obtain the following convergence rates for the Law of Thin Numbers. First, we give finite $n$ bounds:
\begin{corollary}
     Let $T = \E X >0$ and $X \sim f \dd \pi_{T}.$  
     \begin{enumerate}
         \item If $f : \N \rightarrow [0, \infty),$ then
         \begin{displaymath}
             H \Big( \sum_{i=1}^{n} T_{1/n}(X_{i}) |\pi_{T} \Big) \le \frac{I(X | \pi_{T})}{n}
         \end{displaymath}
         \item and if $f : \N \rightarrow (0,\infty) $ is ultra log-concave, then 
         \begin{displaymath}
             H \Big( \sum_{i=1}^{n} T_{1/n}(X_{i}) |\pi_{T} \Big) \le 
         \frac{1}{n} \Big( I(X | \pi_{T}) - T^{2} C^{2} \Big) + \frac{1}{n^{2}} T^{2} C^{2}
         \end{displaymath}
         where $C = \frac{f(0)}{f(1) + f(0)}.$
     \end{enumerate} 
\end{corollary}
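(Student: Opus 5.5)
Both bounds follow by chaining \eqref{eq:subadd} with the Fisher-information estimates for $a$--thinning at $a=1/n$. By \eqref{eq:subadd}, valid since $X_1,\dots,X_n$ are i.i.d. with mean $T$,
\begin{displaymath}
H\Big(\sum_{i=1}^{n} T_{1/n}(X_i)\,\Big|\,\pi_T\Big)\le I\big(T_{1/n}(X)\,\big|\,\pi_{T/n}\big),
\end{displaymath}
so it suffices to bound the Fisher information of the single thinned variable $T_{1/n}(X)$ relative to $\pi_{T/n}$.

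For item (1), apply Theorem \ref{thm:FIsc} with $a=1/n$. This gives $I(T_{1/n}(X)|\pi_{T/n})\le \frac1n I(X|\pi_T)$, and it holds for any nonnegative density $f$ because the truncation argument was already carried out in the proof of Theorem \ref{thm:FIsc}. If $I(X|\pi_T)=\infty$ there is nothing to prove.

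For item (2), use instead Theorem \ref{thm:main1.4} with $a=1/n$, in the form \eqref{eq:ULC_estimate} from Remark \ref{remark:comparison}. The one point to check is that, with $T=\E X$ and $c=f(0)/f(1)$, one has $T^2\Delta_c(1)=T^2\frac{c^2}{(1+c)^2}=T^2C^2$ with $C=\frac{f(0)}{f(0)+f(1)}$. Rearranging $\delta_{\mathrm{FI}}(f)\ge a(1-a)T^2C^2$ then gives
\begin{displaymath}
I(T_{1/n}(X)|\pi_{T/n})\le \frac1n\big(I(X|\pi_T)-T^2C^2\big)+\frac1{n^2}T^2C^2,
\end{displaymath}
and combining this with the first display proves item (2).

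I do not expect a real obstacle, since the proof only assembles earlier results. Two things need care. First, the first inequality in \eqref{eq:subadd} relies on the Bobkov--Ledoux bound \eqref{eq:wu}, which is stated for positive $f$; for zero values one should appeal to the convention $0/0=0$ or to the same truncation argument. Second, Theorem \ref{thm:main1.4} uses Proposition \ref{proposition:ARS}, so $f$ must be positive; this is exactly the ultra log-concavity hypothesis of item (2).
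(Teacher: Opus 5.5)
Your proposal is correct and is the paper's proof: chain \eqref{eq:subadd} with Theorem \ref{thm:FIsc} for item (1) and with Theorem \ref{thm:main1.4} for item (2), both at $a=1/n$. The paper states this in one line, and your check that $T^2\Delta_{f(0)/f(1)}(1)=T^2C^2$ when $\E X=T$ supplies the only computation the paper leaves implicit.
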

\begin{proof}
    The proof follows by combining Theorems \ref{thm:FIsc}, \ref{thm:main1.4} with \eqref{eq:subadd}. 
\end{proof}

\begin{remark}
    \label{rem:general_comparison} Let us compare the general bound of the previous Corollary, obtained by Theorem \ref{thm:FIsc} with \eqref{eq:general_conc}. Our bound is smaller, for $n\ge 2,$ if
    \begin{equation}
        \label{eq:comparison2}
        \frac{I(X|\pi_{T})}{n} \le \frac{\Var(X)}{n T} < \frac{\Var(X)}{n T} + \frac{1}{n^{2}}
    \end{equation}
    and this is indeed the case even for the emblematic example of Bernoulli random variables whenever expectation is small enough. Consider $X,$ a Bernoulli random variable with parameter $0 < T < 1.$ Then, its density with respect to $\pi_{T}$ is given by $f(0)= e^{T}(1-T),f(1)=e^{T} $ and $f(k)=0$ for all $k \ge 2.$ A calculation shows $I(X | \pi_{T}) = \frac{T^{2}}{1 - T}$ and 
    \begin{displaymath}
        \frac{T^{2}}{1 - T} \le 1 - T = \frac{\Var(X)}{ T}
    \end{displaymath}
    if and only if $0 < T \le \frac{1}{2}.$ Hence, \eqref{eq:comparison2} is valid for $0 < T \le \frac{1}{2}.$
\end{remark}

For our next result, we will need the following lemma.

\begin{lemma}
\label{lem:Johnson}
    Let $X$ have density $f$ with respect to $\pi_{T}$ and $T = \E X > 0.$ Then 
    \begin{displaymath}
        \frac{P_{T-s}f(1)}{P_{T-s}f(0)} \le \frac{T}{T-s}
    \end{displaymath}
    for $s < T.$
\end{lemma}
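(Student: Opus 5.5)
The plan is to write both semigroup values as sums against the law of $X$ and reduce the claim to a comparison of means. The key identity is a shift of index in $P_{T-s}f(1)$. Write $r := (T-s)/T \in (0,1]$ and $p(m) := f(m)\pi_T(m) = \p(X=m)$, so that $f(m)\pi_{T-s}(m) = e^{s}\, r^{m}\, p(m)$.

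First, by \eqref{eq:semigroup},
\begin{displaymath}
P_{T-s}f(0) = \sum_{m\ge 0} f(m)\pi_{T-s}(m) = e^{s}\sum_{m\ge 0} r^{m} p(m) = e^{s}\,\E\big[r^{X}\big].
\end{displaymath}
Second, the substitution $m=n+1$ together with $\pi_{T-s}(m-1) = \frac{m}{T-s}\pi_{T-s}(m)$ gives
\begin{displaymath}
P_{T-s}f(1) = \sum_{n\ge 0} f(n+1)\pi_{T-s}(n) = \frac{1}{T-s}\sum_{m\ge 1} m f(m)\pi_{T-s}(m) = \frac{e^{s}}{T-s}\,\E\big[X r^{X}\big].
\end{displaymath}
Dividing the two expressions, the claim becomes
\begin{displaymath}
\frac{\E[X r^{X}]}{\E[r^{X}]} \le T = \E X .
\end{displaymath}
In words: the mean of the exponentially tilted law $\nu(m)\propto r^{m}p(m)$ is at most the mean of $X$.

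To prove this inequality, I would use Chebyshev's association inequality. The functions $x\mapsto x$ and $x\mapsto r^{x}$ are respectively nondecreasing and nonincreasing on $\N$, since $r\le 1$. Hence $\E[X r^{X}] \le \E[X]\,\E[r^{X}]$, which is exactly the required bound. Equivalently, one can note that $\theta\mapsto \log \E e^{\theta X}$ is convex, so its derivative, the tilted mean, is nondecreasing in $\theta$; then compare $\theta=\log r\le 0$ with $\theta=0$. The case $s=0$ gives equality and is consistent with $P_Tf(1)=P_Tf(0)=1$ from Lemma \ref{lem:martingale}.

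The main obstacle is only integrability and degeneracy. $\E X = T<\infty$ and $0<r\le 1$ guarantee that all the sums converge and that Chebyshev's inequality applies. Moreover, $\E[r^{X}]>0$ always holds since $r>0$, so $P_{T-s}f(0)>0$ and the ratio is well defined for $s<T$. I would state these points briefly. No positivity or log-concavity of $f$ is needed.
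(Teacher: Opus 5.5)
Your proposal is correct and follows essentially the same route as the paper. The paper also rewrites $P_{T-s}f(0)=e^{s}\E r^{X}$ and $P_{T-s}f(1)=\frac{e^{s}}{T}\E\bigl[X r^{X-1}\bigr]=\frac{e^{s}}{T-s}\E\bigl[Xr^{X}\bigr]$, and then concludes with the same Chebyshev-type inequality $\E[Xr^{X}]\le \E X\,\E r^{X}$, which it verifies through the independent-copy identity $2\,\mathrm{Cov}(X,r^{X})=\E\bigl[(X-X')(r^{X}-r^{X'})\bigr]\le 0$.
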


\begin{proof}
We begin by rewriting the expressions appearing in the numerator and the denominator. First, for the denominator, we have
\begin{equation}
      \label{eq:denominator}
       \begin{aligned}
        P_{T-s}f(0)
        &= e^{s} \sum_{k=0}^{\infty} f(k) \Big( 1- \frac{s}{T} \Big)^{k} \pi_{T}(k) \\
       &= e^{s} \E \Big( 1- \frac{s}{T} \Big)^{X}
\end{aligned}
\end{equation}
and similarly for the numerator
\begin{equation}
      \label{eq:numerator}
       \begin{aligned}
        P_{T-s}f(1)
        &=  \sum_{k=0}^{\infty} k f(k) \frac{\pi_{T-s}(k)}{T-s}  \\
       &= e^{s} \sum_{k=0}^{\infty} \frac{k}{T} f(k) 
       \Big( 1- \frac{s}{T} \Big)^{ k -1 } \pi_{T}(k) = \frac{e^{s}}{T} \E X \Big( 1- \frac{s}{T} \Big)^{X-1}.
\end{aligned}
\end{equation}
Hence, by combining \eqref{eq:denominator},\eqref{eq:numerator} we have
\begin{displaymath}
    \frac{P_{T-s}f(1)}{P_{T-s}f(0)} = \frac{ \E X \big( 1- \frac{s}{T} \big)^{X-1} } {T \E  \big( 1- \frac{s}{T} \big)^{X} } = \frac{T}{T-s} \frac{ \E X \big( 1- \frac{s}{T} \big)^{X} } {T \E  \big( 1- \frac{s}{T} \big)^{X} }
\end{displaymath}
and the claim follows by upper bounding the second fraction from above by 1. Let $X'$ be an independent copy of $X$ and $r:= 1- \frac{s}{T} < 1.$ A standard calculation yields
\begin{displaymath}
    2 \Big[ \E \big( X r^{X} \big) - \E  X \E  \big( r^{X} \big) \Big] = \E \Big[ \big( X - X' \big) \big( r^{X} - r^{X'} \big) \Big] \le 0
\end{displaymath}
and the claim follows.    
\end{proof}

\begin{proposition}
     \label{prop:logconcave_estimate}
    Let $T = \E X  >0,$ and assume that $X \sim f \dd \pi_{T},$ where $f: \N \rightarrow (0, \infty) $ is log-concave. Then,
    \begin{displaymath}
        I ( T_{a}(X) | \pi_{aT} ) \le a^{2}T^{2} \frac{1}{1-a}
        +\frac{a^{3}T^{3}}{4} \Big( \frac{1}{1-a} \Big)^{2}.
    \end{displaymath}
    
\end{proposition}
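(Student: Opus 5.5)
The plan is to express the Fisher information through the Poisson--F\"ollmer process and then control the growth of $\E \lambda_t^2$ with Lemma \ref{lem:Ito}. Since $T=\E X$, Lemma \ref{lem:martingale} gives $\lambda_0=\E\lambda_t=1$. By \eqref{eq:FI_scaled_stoch},
\begin{displaymath}
I(T_a(X)|\pi_{aT}) = aT\,\E(\lambda_{aT}-1)^2 = aT\big(\E\lambda_{aT}^2-1\big).
\end{displaymath}
Lemma \ref{lem:Ito} with $h(x)=x^2$ then gives
\begin{displaymath}
I(T_a(X)|\pi_{aT}) = aT\int_0^{aT}\E\big[\lambda_s\,(DG(s,X_s))^2\big]\,\dd s,
\end{displaymath}
exactly as in \eqref{eq:derivative_sq}. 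I would first run this argument for $f$ bounded away from zero and infinity, so that the process is well defined by Remark \ref{remark:existence} and all integrands are bounded. I would then pass to general log-concave $f$ by the approximation and Fatou argument used in the proof of Theorem \ref{thm:FIsc}, choosing an approximation that preserves log-concavity.

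The second step uses log-concavity to get pointwise control on $G$ and $DG$. Log-concavity of $f$ is preserved by the Poisson semigroup, since $P_{T-s}f$ is a convolution of $f$ with a log-concave Poisson mass. Hence $k\mapsto G(s,k)=P_{T-s}f(k+1)/P_{T-s}f(k)$ is nonincreasing, that is $DG\le 0$. Two consequences follow:
\begin{enumerate}
\item Since $G>0$, we get $-\lambda_s\le DG(s,X_s)\le 0$.
\item By monotonicity and Lemma \ref{lem:Johnson},
\begin{displaymath}
\lambda_s=G(s,X_s)\le G(s,0)\le M_s:=\frac{T}{T-s}.
\end{displaymath}
\end{enumerate}
So both $\lambda_s$ and $|DG(s,X_s)|$ are bounded by the deterministic quantity $M_s$.

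The third step converts these bounds into the stated estimate. Write $\lambda_s(DG)^2=\lambda_s|DG|\cdot|DG|$. I expect the main term to come from bounding one factor by its mean and the other by $M_s$, and the quadratic correction $\frac{a^3T^3}{4(1-a)^2}$ from an elementary inequality $u(1-u)\le\frac14$. A natural choice is to write $DG=-\lambda_s u$ with $u\in[0,1]$. Then $\lambda_s(DG)^2=\lambda_s^3u^2$, which I would try to split as $\lambda_s^2 u\cdot(\lambda_s u)$, using $u\le 1$ on one factor.

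The first integral to aim for is
\begin{displaymath}
\int_0^{aT}\frac{T}{(T-s)^2}\,\dd s=\frac{a}{1-a},
\end{displaymath}
which matches the leading term after multiplying by $aT$. The second integrand should come out of order $\frac{s\,T}{(T-s)^3}$ or similar, with the $\frac14$ from the AM--GM step, and integrate to $\frac{a^2T^2}{4(1-a)^2}$. To identify the leading term I would use that $\E[\lambda_s DG]$ can be rewritten through \eqref{eq:form2} as a time derivative of $\E G(s,\cdot)$ evaluated at suitable points. This gives a quantity controlled by $\partial_s M_s = T/(T-s)^2$.

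The main obstacle is this third step. The crude bound $\lambda_s(DG)^2\le M_s^3$ integrates to $\frac{T}{2}\big((1-a)^{-2}-1\big)$, which has the wrong structure. Getting the precise form $\frac{a^2T^2}{1-a}+\frac{a^3T^3}{4(1-a)^2}$ therefore needs a sharper pairing of the factors. If the direct pointwise route fails, the fallback is to bound $\E\lambda_{aT}^2$ directly, via $\lambda_{aT}\le M_{aT}$ and $\E\lambda_{aT}=1$, by the variance bound $(M-1)\cdot 1$ together with a Bhatia--Davis type correction. I would check whether the $\frac14$ arises there instead.
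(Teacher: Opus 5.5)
\textbf{There is a genuine gap: the third step, which carries the whole estimate, is never carried out.} Your first two steps agree with the paper:
\begin{itemize}
\item $I(T_a(X)|\pi_{aT}) = aT(\E\lambda_{aT}^2-1)$;
\item $\partial_s \E\lambda_s^2 = \E[\lambda_s (DG(s,X_s))^2]$, by Lemma \ref{lem:Ito};
\item by log-concavity of $P_{T-s}f$ and Lemma \ref{lem:Johnson}, $-\lambda_s \le DG(s,X_s)\le 0$ and $\lambda_s \le M_s = T/(T-s)$.
\end{itemize}
The mechanism you guess for the third step (pairing factors, $u(1-u)\le \frac14$) is not where the bound comes from.

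\textbf{The missing idea.} Do not bound the integrand pointwise. Apply Cauchy--Schwarz to the derivative and turn it into a differential inequality for $\sqrt{\E\lambda_s^2}$:
\[
\partial_s \E\lambda_s^2 \le \sqrt{\E\lambda_s^2}\,\sqrt{\E (DG(s,X_s))^4},
\qquad\text{hence}\qquad
\partial_s \sqrt{\E\lambda_s^2} \le \tfrac12 \sqrt{\E (DG(s,X_s))^4} \le \tfrac12\sqrt{\E\lambda_s^4} \le \tfrac12 M_s^2 .
\]
Integrate over $[0,aT]$, using $\lambda_0=1$ and $\int_0^{aT} T^2 (T-s)^{-2}\,\dd s = \frac{aT}{1-a}$. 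This gives $\sqrt{\E\lambda_{aT}^2}\le 1+\frac{aT}{2(1-a)}$. Squaring and multiplying by $aT$ gives exactly $\frac{a^2T^2}{1-a}+\frac{a^3T^3}{4(1-a)^2}$. So the $\frac14$ is simply the square of the $\frac12$ that comes from differentiating a square root.

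\textbf{No approximation step is needed.} Log-concavity already gives $\lambda_s \le f(1)/f(0)$ and the existence of the process (Remark \ref{remark:existence}). Any approximation would also have to preserve $T=\E X$, which enters through $\lambda_0=1$ and Lemma \ref{lem:Johnson}.

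\textbf{Your fallback is correct but proves a different inequality.} Since $0\le \lambda_{aT}\le M_{aT}=\frac{1}{1-a}$ and $\E\lambda_{aT}=1$, you get $\E\lambda_{aT}^2 \le M_{aT}\,\E\lambda_{aT} = \frac1{1-a}$. Hence
\[
I(T_a(X)|\pi_{aT})\le \frac{a^2T}{1-a},
\]
with no $\frac14$ term at all.
\begin{itemize}
\item For $T\ge 1$ this is sharper than the stated bound, and it needs no stochastic calculus.
\item For $T<1$ and small $a$ it is strictly weaker. For example, at $T=\frac12$, $a=\frac1{10}$ it does not imply the claim.
\item Likewise, your crude bound $\lambda_s(DG)^2\le M_s^3$ gives $\frac{a^2T^2(2-a)}{2(1-a)^2}$, which implies the stated bound only when $T\ge 2$.
\end{itemize}
As written, your proposal therefore establishes the proposition only for $T\ge 1$. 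The Cauchy--Schwarz step above is what covers the remaining range.
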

\begin{proof}
By \cite[Proposition 2.2]{RS}, Poisson semigroup preserves log-concavity, that is $P_{t}f$ is log-concanve for all $t \ge 0.$ Hence, $DG(s,X_{s}) \le 0$ for all times. Moreover, by \cite[Corollary 2.4]{RS}, for all $t \in [0,T]$ and all $k \in \N,$ we have
\begin{displaymath}
    \frac{P_{T-t} f(k+1) }{P_{T-t}f(k) } \le \frac{f(1)}{f(0)}
\end{displaymath}
and consequently $\lambda_{s}$ and $ DG(s,X_{s})$ are bounded functions of $s$ and continuous except for finitely many points. By applying Lemma \ref{lem:Ito} for $h(x)=x^{2}$ and the Cauchy--Schwarz inequality, we obtain
\begin{displaymath}
    \partial_s \big( \E \lambda_{s}^{2} \big) = \E \Big[ \lambda_{s} \big( DG(s,X_{s}) \big)^{2} \Big] \le \sqrt{\E \lambda_{s}^{2}} \sqrt{\E \big( DG(s,X_{s}) \big)^{4}} 
\end{displaymath}
which implies by Gr\"onwall's estimate 
\begin{equation}
    \label{eq:after_CS}
    \sqrt{ \E \lambda_{t}^{2} } - \sqrt{ \lambda_{0}^{2}} \le \frac{1}{2} \int_{0}^{t} \sqrt{ \E \big( DG(s,X_{s}) \big)^{4} } ds \le \frac{1}{2} \int_{0}^{t} \sqrt{ \E \lambda_{s}^{4}} \dd s .
\end{equation}
In the last inequality of \eqref{eq:after_CS}, we used the identity $DG( s,X_{s}) = G(s,X_{s} + 1) - \lambda_{s}$ and the fact that log-concavity implies $DG(s,X_{s}) \le 0 .$ Moreover, since $P_{T-s}f$ is log-concave for all $s,$ we obtain
\begin{equation}
\label{eq:integral_1}
       \sqrt{ \E \lambda_{s}^{4}} \le \Bigg( \frac{P_{T-s}f(1)}{P_{T-s}f(0)} \Bigg)^{2} 
\end{equation}
and by Lemma \ref{lem:Johnson}
\begin{equation}
       \label{eq:integral_2}
        \begin{aligned}
        \int_{0}^{aT} \Bigg( \frac{P_{T-s}f(1)}{P_{T-s}f(0)} \Bigg)^{2} \dd s
        &\le \int_{0}^{aT} \frac{1}{\big(1 - \frac{s}{T} \big)^{2}} \dd s
        \overset{u=\frac{s}{T}}{=}
        T \int_{0}^{a} \frac{1}{(1-u)^{2}} \dd u \\
        &= T \Big(\frac{1}{1-a} -1 \Big) = \frac{aT}{1 - a}.
\end{aligned}
\end{equation}
Combining estimates \eqref{eq:integral_1},\eqref{eq:integral_2}, we bound the integral in \eqref{eq:after_CS} for $t = aT$
\begin{displaymath}
     \int_{0}^{aT} \sqrt{ \E \lambda_{s}^{4}} \dd s \le \frac{aT}{1 - a} 
\end{displaymath}
and the estimate \eqref{eq:after_CS} becomes
\begin{equation}
    \label{eq:integral_4}
     \sqrt{ \E \lambda_{aT}^{2} } - 1 \le \frac{1}{2} \int_{0}^{aT} \sqrt{ \E \lambda_{s}^{4}} \dd s \le \frac{aT}{2(1-a)}  .
\end{equation}
Hence,
\begin{align*}
        I\big(T_a(X)\mid\pi_{aT}\big)
        &=
        aT \big[\E \lambda_{aT}^{2}-1\big]
        \overset{\eqref{eq:integral_4}}{\le}
        aT\Bigg[ \Big(1 + \frac{aT}{2(1-a)} \Big)^{2} - 1 \Bigg] \\
        &=
        a^{2}T^{2} \frac{1}{1-a}
        +\frac{a^{3}T^{3}}{4} \Big( \frac{1}{1-a} \Big)^{2}.
\end{align*}
\end{proof}

\begin{remark}
Let $f: \N \rightarrow (0,\infty)$ ultra log-concave density of $X$ with respect to $\pi_{T}$ and assume $T = \E X.$ The bound in \eqref{eq:ULC_estimate} is smaller than that of Proposition \ref{prop:logconcave_estimate}, whenever $I (X | \pi_{T} ) \le T^{2} C^{2}.$ This is the same regime in which \eqref{eq:ULC_estimate} is smaller than the estimate \eqref{eq:log_conc}. Indeed, 
\begin{align*}
       a\big(I(X | \pi_T)-T^{2}C^{2}\big)+a^{2}T^{2}C^{2}
       &\le
        a^{2}T^{2}\Bigg(\frac{1}{1 + \frac{f(1)}{f(0)}}\Bigg)^{2}
       \le
        a^{2}T^{2} \frac{1}{1-a} \\
       &\le
       a^{2}T^{2} \frac{1}{1-a}
        +\frac{a^{3}T^{3}}{4} \Big( \frac{1}{1-a} \Big)^{2}.
\end{align*}

\end{remark}

\begin{corollary}
     Let $T = \E X>0$ and $X \sim f \dd \pi_{T}.$ Then, if $f: \N \rightarrow (0,\infty)$ log-concave
     \begin{displaymath}
         H \Big( \sum_{i=1}^{n} T_{1/n}(X_{i}) |\pi_{T} \Big) \le  \frac{1}{n(n-1)} T^{2}  +  \frac{1}{n (n-1)^{2} } \frac{T^{3}}{4}
     \end{displaymath}
     and for large enough $n,$ 
     \begin{displaymath}
          H \Big( \sum_{i=1}^{n} T_{1/n}(X_{i}) |\pi_{T} \Big) \le c  \frac{T^{2}}{n^{2}} 
     \end{displaymath}
     for $c>1$ constant arbitrarily close to 1.
\end{corollary}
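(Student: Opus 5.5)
The plan is to combine the chain \eqref{eq:subadd} with Proposition \ref{prop:logconcave_estimate} at $a = 1/n$. By \eqref{eq:subadd}, which only needs $X_1,\dots,X_n$ i.i.d.\ with mean $T$,
\begin{displaymath}
H \Big( \sum_{i=1}^{n} T_{1/n}(X_{i}) \,\big|\, \pi_{T} \Big) \le I\big(T_{1/n}(X) \,|\, \pi_{T/n}\big).
\end{displaymath}
Since $f$ is positive and log-concave and $T = \E X > 0$, Proposition \ref{prop:logconcave_estimate} applies with $a = 1/n \in (0,1)$ for $n \ge 2$.

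For the finite-$n$ bound, substitute $a = 1/n$. Then $\frac{1}{1-a} = \frac{n}{n-1}$, so $a^{2}T^{2}\frac{1}{1-a} = \frac{T^{2}}{n(n-1)}$. Likewise $\frac{a^{3}T^{3}}{4}\big(\frac{1}{1-a}\big)^{2} = \frac{T^{3}}{4}\cdot\frac{1}{n^{3}}\cdot\frac{n^{2}}{(n-1)^{2}} = \frac{1}{n(n-1)^{2}}\frac{T^{3}}{4}$. This is exactly the first displayed inequality.

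For the asymptotic statement, multiply the bound by $n^{2}/T^{2}$:
\begin{displaymath}
\frac{n^{2}}{T^{2}}\Big(\frac{T^{2}}{n(n-1)} + \frac{T^{3}}{4n(n-1)^{2}}\Big) = \frac{n}{n-1} + \frac{nT}{4(n-1)^{2}}.
\end{displaymath}
The right-hand side tends to $1$ as $n \to \infty$. So for any fixed $c > 1$ there is an $n_0$, depending on $T$ and $c$, such that for all $n \ge n_0$ the entropy is at most $c\,T^{2}/n^{2}$.

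There is no real obstacle here. The only points to check are that \eqref{eq:subadd} holds without extra regularity assumptions, and that the case $n = 1$ is excluded: there the bound is vacuous because of the factor $\frac{1}{n-1}$.
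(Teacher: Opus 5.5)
Your proposal is correct and is exactly the paper's argument: set $a=1/n$ in Proposition \ref{prop:logconcave_estimate} and combine it with \eqref{eq:subadd}. Your algebra for the finite-$n$ bound is right, and so is the limit $\frac{n}{n-1}+\frac{nT}{4(n-1)^{2}}\to 1$, with $n_0$ depending on $c$ and $T$.
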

\begin{proof}
    The proof follows by combining Proposition \ref{prop:logconcave_estimate} with \eqref{eq:subadd}. 
\end{proof}

Moreover, we also derive an asymptotic result that describes the limiting decay rate as $n \rightarrow \infty,$ for which it will be convenient to introduce the definition of the \textit{size--basing operation}. 
\newline

\textbf{Definition}: Given a discrete random variable taking values in $\N$ with $T=\E X>0,$ the \textit{size--basing} operation of $X$ is defined as
\begin{displaymath}
    \p \big( S(X) = k \big) =  \frac{k + 1}{T} \p (X = k+1), \quad \forall k \ge 0.
\end{displaymath}
We provide, for completeness, a short calculation of the expectation of the \textit{size--basing operation} that is well known in the information theoretic literature as well as another calculation of a quanity that will naturally appear in the following.
\begin{lemma}
    \label{lem:law_siaze_bias}
    Let $X $ have density $f$ with respect to $\pi_{T}$ and $T=\E X.$ Then, $S(X) \sim f(\cdot + 1) \dd \pi_{T}$ and $\E S(X) = \frac{\E X^{2}}{T} - 1.$ Moreover,
    \begin{displaymath}
        P_{T}f(2) = \frac{\E S(X)}{T} .
    \end{displaymath}
\end{lemma}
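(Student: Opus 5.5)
We verify the three claims by direct computation from the definitions, writing $\p(X=k)=f(k)\pi_T(k)$.

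\textbf{Law of $S(X)$.} For $k\ge 0$ we have
\begin{displaymath}
\p(S(X)=k)=\frac{k+1}{T}f(k+1)e^{-T}\frac{T^{k+1}}{(k+1)!}=f(k+1)e^{-T}\frac{T^{k}}{k!}=f(k+1)\pi_T(k).
\end{displaymath}
Hence $S(X)\sim f(\cdot+1)\,\dd\pi_T$. Summing over $k$ gives $\sum_k \frac{k+1}{T}\p(X=k+1)=\E X/T=1$, so $S(X)$ is a probability measure exactly because $T=\E X$.

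\textbf{Expectation of $S(X)$.} Substituting $j=k+1$,
\begin{displaymath}
\E S(X)=\sum_{k\ge0}k\,\frac{k+1}{T}\p(X=k+1)=\frac1T\sum_{j\ge1}(j-1)j\,\p(X=j)=\frac{\E X^2-\E X}{T}=\frac{\E X^2}{T}-1.
\end{displaymath}
If $\E X^2=\infty$, both sides are infinite and the identity still holds, since all terms are nonnegative.

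\textbf{The identity for $P_Tf(2)$.} By \eqref{eq:semigroup},
\begin{displaymath}
P_Tf(2)=\sum_{n\ge0}f(n+2)\pi_T(n).
\end{displaymath}
We use $\pi_T(n)=\frac{(n+1)(n+2)}{T^2}\pi_T(n+2)$ and substitute $j=n+2$:
\begin{displaymath}
P_Tf(2)=\frac1{T^2}\sum_{j\ge2}j(j-1)f(j)\pi_T(j)=\frac{\E X(X-1)}{T^2}=\frac{\E S(X)}{T}.
\end{displaymath}
The last equality is the previous step divided by $T$. Equivalently, by the first part, $P_Tf(2)=P_T[f(\cdot+1)](1)$. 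Applying the identity $P_Tg(1)=\E Y/T$ for $Y\sim g\,\dd\pi_T$ (Lemma~\ref{lem:martingale}) with $Y=S(X)$ gives the same conclusion.

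The only points needing care are the index shifts and the use of $T=\E X$ for normalization; everything else is routine series rearrangement of nonnegative terms, so no step is a genuine obstacle.
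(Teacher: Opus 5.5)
Your proof is correct and is essentially the paper's argument: a direct computation using the index-shift identity for Poisson weights, with $T=\E X$ guaranteeing that $f(\cdot+1)\pi_T$ is a probability measure. The only cosmetic difference is in the last identity. You shift twice to get $\E X(X-1)/T^{2}$ and then invoke the expectation formula, whereas the paper shifts once and reads off $\frac{1}{T}\sum_k k f(k+1)\pi_T(k)=\E S(X)/T$ directly from the law of $S(X)$, which is exactly your alternative route via $P_T[f(\cdot+1)](1)$.
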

\begin{proof}
    Note that
    \begin{displaymath}
        \p( S(X)=k ) = \frac{k+1}{T} f(k + 1) \pi_{T} (k+1) = f(k+1) \pi_{T}(k)
    \end{displaymath}
    and its expectation is given by
    \begin{align*}
     \E S(X)
      &= \sum_{k=0}^{\infty} k f(k+1)\pi_{T}(k)
      = \sum_{k=0}^{\infty} (k+1)f(k+1)\pi_{T}(k) - 1 \\
      &= \frac{1}{T}\sum_{k=0}^{\infty} k^2 f(k)\pi_{T}(k) - 1
       = \frac{\E X^2}{T} - 1.
     \end{align*}
    To prove the moreover part, we observe that
    \begin{displaymath}
        P_{T}f(2) = \sum_{k=0}^{\infty} f(k+2) \pi_{T} (k) = \frac{1}{T} \sum_{k=0}^{\infty} k f(k+1) \pi_{T} (k) = \frac{\E S(X)}{T} .
    \end{displaymath}
\end{proof}

\begin{proposition}
     \label{prop:asymptotic}
    Let $T = \E X >0,$ and assume that $X \sim f \dd \pi_{T}.$ If $f$ is bounded away from zero and infinity, then
    \begin{displaymath}
        \limsup_{n \rightarrow \infty} \Bigg[ n^{2}  H \Big( \sum_{i=1}^{n} T_{1/n}(X_{i}) |\pi_{T} \Big)  \Bigg] \le  \Big( \frac{\Var(X)}{T } - 1 \Big)^{2}.
    \end{displaymath}
    
\end{proposition}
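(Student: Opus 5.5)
The plan is to route through the Fisher information rather than the entropy directly. Using the data processing bound \eqref{eq:datapr} together with the submartingale estimate \eqref{eq:before_limit} would only give order $1/n$: near $t=0$ one has $\E\phi(\lambda_t)\approx \tfrac12\E(\lambda_t-1)^2=O(t)$, so $H(T_{1/n}(X)|\pi_{T/n})=O(1/n^2)$, and after multiplying by $n$ this is $O(1/n)$. Instead I will use the subadditivity chain \eqref{eq:subadd}, which gives
\begin{displaymath}
H\Big(\sum_{i=1}^n T_{1/n}(X_i)\,\big|\,\pi_T\Big)\le I\big(T_{a}(X)\,|\,\pi_{aT}\big),\qquad a=\tfrac1n .
\end{displaymath}
It then suffices to show that $a^{-2}I(T_a(X)|\pi_{aT})\to (\Var(X)/T-1)^2$ as $a\to0^+$. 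By \eqref{eq:FI_scaled_stoch} and $\E\lambda_t=\lambda_0=1$ (Lemma \ref{lem:martingale} with $T=\E X$),
\begin{displaymath}
I(T_a(X)|\pi_{aT})=aT\big(\E\lambda_{aT}^2-1\big)=aT\int_0^{aT}\E\big[\lambda_s\,(DG(s,X_s))^2\big]\,\dd s .
\end{displaymath}
The second equality is Lemma \ref{lem:Ito} with $h(x)=x^2$, exactly as in \eqref{eq:derivative_sq}.

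Dividing by $a^2$, the quantity becomes $T^2$ times the average of $s\mapsto \E[\lambda_s (DG(s,X_s))^2]$ over $[0,aT]$. This average tends to the value at $s=0$ provided the map is continuous at $0$. At $s=0$ we have $X_0=0$, $\lambda_0=P_Tf(1)/P_Tf(0)=1$ and $P_Tf(0)=1$. Hence
\begin{displaymath}
DG(0,0)=\frac{P_Tf(2)}{P_Tf(1)}-\frac{P_Tf(1)}{P_Tf(0)}=P_Tf(2)-1=\frac{\E S(X)}{T}-1 ,
\end{displaymath}
using Lemma \ref{lem:law_siaze_bias}. Since $\E S(X)=\E X^2/T-1$, this equals
\begin{displaymath}
\frac{\E X^2-T-T^2}{T^2}=\frac1T\Big(\frac{\Var(X)}{T}-1\Big).
\end{displaymath}
Therefore $T^2\lambda_0 DG(0,0)^2=(\Var(X)/T-1)^2$. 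Putting these together, $n^2H\le T^2\cdot(\text{average})$, and taking $\limsup$ gives the claim.

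The main obstacle is the technical justification, and it uses the hypothesis that $f$ is bounded away from zero and infinity. Writing $m\le f\le M$, every ratio $P_{T-t}f(k+1)/P_{T-t}f(k)$ lies in $[m/M,M/m]$. So $\lambda_s$ and $DG(s,X_s)$ are uniformly bounded, and the process is well defined by Remark \ref{remark:existence}. Boundedness makes Lemma \ref{lem:Ito} applicable with the integrand bounded, so the differentiation and Fubini steps are legitimate. For continuity at $s=0$, I would argue as in the proof of Lemma \ref{lem:derivative}. On the event $\{X_s=0\}$, whose probability tends to $1$ because the intensity is bounded, the integrand equals $G(s,0)\,DG(s,0)^2$. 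This is continuous in $s$, since $t\mapsto P_{T-t}f(k)$ is continuous by dominated convergence. On the complement the integrand is uniformly bounded, so that contribution vanishes as $s\to0$. Dominated convergence then gives continuity of $s\mapsto\E[\lambda_s(DG(s,X_s))^2]$ at $0$, which completes the argument.
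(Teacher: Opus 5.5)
Your proposal is correct and follows the paper's proof essentially step by step. Both arguments use the subadditivity chain \eqref{eq:subadd}, the identity $I(T_a(X)|\pi_{aT})=aT(\E\lambda_{aT}^2-1)$, Lemma~\ref{lem:Ito} with $h(x)=x^2$ to identify the limit as $T^2\lambda_0\,DG(0,X_0)^2=T^2(P_Tf(2)-1)^2$, and Lemma~\ref{lem:law_siaze_bias} to rewrite this as $(\Var(X)/T-1)^2$. Your continuity argument at $s=0$, via the event $\{X_s=0\}$ and the uniform bounds $m/M\le\lambda_s\le M/m$, spells out a step the paper only asserts.
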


\begin{proof}
 Recall first that by \eqref{eq:subadd}, we have
\begin{equation}
     \label{eq:limits}
    \limsup_{n \rightarrow \infty} \Big( n^{2}H \Big( \sum_{i=1}^{n} T_{1/n}(X_{i}) |\pi_{T} \Big) \Big) \le \limsup_{n \rightarrow \infty} \Big( n^{2} I (T_{1/n}(X) | \pi_{T/n}) \Big).
\end{equation}
By \eqref{eq:FI_scaled_stoch} and since $\E \lambda_{t} = 1,$ we obtain 
\begin{displaymath}
    I( T_{a} (X) |\pi_{aT} ) = aT \big[ \E  \lambda_{aT}^{2} - 1 \big] .
\end{displaymath}
Dividing the above identity by $a^{2}$ and letting $a \rightarrow 0^{+},$ the derivative of $\E \lambda_{t}^{2}$ at 0 appears. Under our assumptions, the functions $s \mapsto \lambda_{s}$ and $s \mapsto DG(s,X_{s})$ are bounded and continuous except at finitely many points. Applying Lemma \ref{lem:Ito} with $h(x)=x^{2},$ we obtain
\begin{align*}
\lim_{a \rightarrow 0^{+}} \frac{I(T_{a}(X) | \pi_{aT} ) }{a^{2}}
&= T \lim_{a \rightarrow 0^{+}}
   \frac{\E \lambda_{aT}^{2} - 1}{a} \\[6pt]
&= T \frac{d}{da}\Big( \E\lambda_{aT}^{2} \Big)\Big|_{a=0^{+}}
   = T^{2} \lambda_{0} \big( DG(0,X_{0}) \big)^{2} \\[6pt]
&= T^{2} \big( P_{T}f(2) - 1 \big)^{2}
\end{align*}
where the last equality follows from the identities $P_{T}f(1) = 1$ whenever $T = \E X ,$ and $P_{T} f(0) =1.$ To conclude the proof, we apply Lemma \ref{lem:law_siaze_bias}, which yields 
\begin{displaymath}
    T^{2} \big( P_{T}f(2) - 1 \big)^{2} = T^{2} \Big( \frac{\E X^{2}}{T^{2}} - \frac{1}{T} - 1 \Big)^{2} = \Big( \frac{\Var(X)}{T} - 1 \Big)^{2}.
\end{displaymath}

\end{proof}

\subsection{Equality cases and Size-biasing operation}

\subsubsection{Equality Cases}
\label{sss:equality_cases}
We now characterize the equality cases for both the Thinning Lemma and the Fisher information comparison under $a$--thinning. For one implication, we only need the following straightforward identity for the relative entropy between Poisson measures with different parameters.
\begin{lemma}
    \label{lem:calc1}
    \begin{displaymath}
        H \big(\pi_{x} | \pi_{y} \big) = y H \big( \pi_{\frac{x}{y}} | \pi_{1} \big) = y \phi \Big( \frac{x}{y}  \Big), \quad \forall x,y>0.
    \end{displaymath}
\end{lemma}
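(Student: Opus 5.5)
The plan is a direct computation from the definition of relative entropy, with no use of the Poisson--F\"ollmer machinery. The density of $\pi_{x}$ with respect to $\pi_{y}$ is explicit:
\begin{displaymath}
    g(k) := \frac{\pi_{x}(k)}{\pi_{y}(k)} = e^{y-x} \Big( \frac{x}{y} \Big)^{k}, \quad k \in \N .
\end{displaymath}
This function is positive and $\pi_{y}$-integrable with $\int_{\N} g \dd \pi_{y} = 1$, so $H(\pi_{x}|\pi_{y}) = \int_{\N} g \log g \dd \pi_{y} = \E_{\pi_{x}} \log g$.

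First, $\log g(k) = (y-x) + k \log(x/y)$. Taking expectation under $\pi_{x}$ and using $\E_{\pi_{x}} k = x$ gives
\begin{displaymath}
    H(\pi_{x}|\pi_{y}) = y - x + x \log \frac{x}{y}.
\end{displaymath}
Factoring out $y$, this equals $y\big( \tfrac{x}{y}\log\tfrac{x}{y} - \tfrac{x}{y} + 1\big) = y\,\phi(x/y)$ by the definition \eqref{eq:phi} of $\phi$. This proves the second equality.

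For the middle expression, I apply the same formula with $(x,y)$ replaced by $(x/y, 1)$. This gives $H(\pi_{x/y}|\pi_{1}) = 1 - \tfrac{x}{y} + \tfrac{x}{y}\log\tfrac{x}{y} = \phi(x/y)$, and multiplying by $y$ yields the first equality. Equivalently, one may note that both relative entropies are $1$-homogeneous under the joint scaling $(x,y) \mapsto (cx, cy)$.

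There is no real obstacle here. The only points requiring care are checking that the series defining the entropy converges absolutely, which holds because $\log g$ is affine in $k$ and $\pi_{x}$ has finite mean, and keeping the convention $H(\mu|\pi_{y}) = \int f \log f \dd \pi_{y}$ consistent with the form $\E_{\mu}\log f$ used in the computation.
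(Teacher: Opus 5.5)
Your computation is correct. The density $e^{y-x}(x/y)^{k}$, the resulting formula $H(\pi_{x}|\pi_{y}) = y - x + x\log(x/y) = y\,\phi(x/y)$, and the specialization to the pair $(x/y,1)$ are exactly the direct calculation the paper has in mind when it calls this a ``straightforward identity'' and omits the proof.
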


\begin{proposition}
    (Equality cases of Thinning Lemma). Let $X$ be a random variable with density $f$ with respect to $\pi_{T}.$ Assume that $f : \N \rightarrow ( 0, \infty)$ satisfies $H( X | \pi_{T}) < \infty.$ Then, equality holds in the Thinning Lemma, i.e.,
    \begin{displaymath}
        H( T_{a}(X) | \pi_{aT}) = a H(X | \pi_{T} )
    \end{displaymath}
    for some $a \in (0,1)$ if and only if $f(k)=e^{\beta k + \gamma}$ for some $\beta ,\gamma \in \R$ (that is $X$ is Poisson distributed).
\end{proposition}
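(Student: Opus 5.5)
For the easy direction, I assume $f(k)=e^{\beta k+\gamma}$. Then $\mu=f\pi_T$ is a probability measure, so $\mu=\pi_{x}$ with $x=Te^{\beta}$ (and $e^{\gamma}=e^{T-x}$). By the thinning property \eqref{eq:thinn_poiss}, $T_a(X)\sim\pi_{ax}$. Lemma \ref{lem:calc1} then gives
\begin{displaymath}
H(T_a(X)|\pi_{aT})=aT\,\phi\Big(\frac{ax}{aT}\Big)=a\,T\phi\Big(\frac{x}{T}\Big)=aH(X|\pi_T),
\end{displaymath}
so equality holds for every $a$.

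For the converse, I first work with $f$ bounded away from zero and infinity, so that Proposition \ref{prosotion:3.3} and the martingale structure are available. The proof of Theorem \ref{thm:Yu} writes the deficit as
\begin{displaymath}
aH(X|\pi_T)-H(T_a(X)|\pi_{aT})=a\int_0^T\big(\E\phi(\lambda_s)-\E\phi(\lambda_{as})\big)\dd s .
\end{displaymath}
Each integrand is nonnegative by the submartingale property, and $s\mapsto\E\phi(\lambda_s)$ is continuous by \eqref{eq:continuity}. Hence equality forces $\E\phi(\lambda_{as})=\E\phi(\lambda_s)$ for all $s\in[0,T]$. Since $\E\phi(\lambda_t)$ is nondecreasing, it is constant on $[as,s]$ for every $s$. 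Taking $s=T, aT, a^2T,\dots$, these intervals cover $(0,T]$, so by continuity at $0$ it is constant on $[0,T]$. In particular $\E\phi(\lambda_T)=\phi(\lambda_0)$.

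Now $\lambda_T=\E[\lambda_T|\mathcal F_0]$ has mean $\lambda_0$, and $\phi$ is strictly convex. Strict Jensen then forces $\lambda_T=\lambda_0$ almost surely, i.e. $f(X_T+1)/f(X_T)=\lambda_0$ a.s. Since $X_T\sim f\pi_T$ with $f>0$ charges every $k\in\N$, this gives $f(k+1)=\lambda_0 f(k)$ for all $k$, i.e. $f(k)=e^{\beta k+\gamma}$ with $\beta=\log\lambda_0$.

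Alternatively one can avoid strict Jensen by using Lemma \ref{lem:Ito} with $h=\phi$. The derivative $\E[\lambda_s^2\phi(e^{D^2F(s,X_s)})]$ must vanish, which forces $D^2\log P_{T-s}f\equiv 0$ on the support of $X_s$. That support is all of $\N$, so $\log P_{T-s}f$ is affine; letting $s\to T$ recovers that $\log f$ is affine. I would use the strict Jensen route, as it is cleaner.

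The main obstacle is removing the boundedness assumption. Truncation as in \eqref{eq:densities} does not preserve equality, so approximation is useless here. Instead I would try to justify the ingredients directly for $f>0$ with $H(X|\pi_T)<\infty$, using the remark after Lemma \ref{lem:uniq}. The points to establish are existence of the Poisson--F\"ollmer process, the representation of Proposition \ref{prosotion:3.3}, and the martingale property of $\lambda$. I expect these to hold whenever $\E\int_0^T\phi(\lambda_t)\dd t<\infty$, possibly via localization at stopping times where $X_t$ exceeds a level $M$. If this fails, a fallback is a purely analytic argument avoiding the process. One would write $H(T_a(X)|\pi_{aT})=\int_0^{a}T\,\E_{\pi_{bT}}\Psi(P_{T(1-b)}f,DP_{T(1-b)}f)\dd b$ and use the convexity of Lemma \ref{lem:derivative}: equality at one interior $a$ forces linearity of $a\mapsto H(T_a(X)|\pi_{aT})$ on $[0,1]$. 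This in turn forces the derivative to be constant, which again yields $\lambda_T$ deterministic.
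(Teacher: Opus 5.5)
Your argument is correct when $f$ is bounded away from zero and infinity. In that case it is slightly more direct than the paper's. Because $s\mapsto\E\phi(\lambda_s)$ is continuous up to $s=T$, you can apply strict Jensen to $\lambda_T=f(X_T+1)/f(X_T)$ itself. The paper instead applies Jensen's equality case at some $b<1$, deduces that $T_b(X)$ is Poisson, and then needs injectivity of thinning.

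However, the proposition assumes only $f>0$ and $H(X|\pi_T)<\infty$. As you note yourself, truncation cannot transfer an equality case, so the general case is the real content of the statement, and your proposal does not prove it. The localization idea is only a hope. Lemma~\ref{lem:uniq} needs a bounded $G$, and Proposition~\ref{prosotion:3.3} and the continuity \eqref{eq:continuity} are proved under boundedness. You do not say how stopping when $X_t$ exceeds a level would give the martingale property of $\lambda$ on $[0,T]$, or the identity $H(T_a(X)|\pi_{aT})=\E\int_0^{aT}\phi(\lambda_t)\,\dd t$, when the ratios $f(k+1)/f(k)$ are unbounded.

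Your fallback is the right route; it is essentially the paper's. But it omits exactly the steps that need work. Set $h(b):=\E_{\pi_{bT}}\Psi\big(P_{T(1-b)}f,DP_{T(1-b)}f\big)$ for $b\in[0,1]$.

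\textbf{Representation formula.} The identity $H(T_a(X)|\pi_{aT})=T\int_0^a h(b)\,\dd b$ for general $f$ is Proposition~\ref{proposition:representation_ARS} applied to the density $P_{T(1-a)}f$. Its proof requires $h(b)<\infty$ for $b<1$, which is Proposition~\ref{prop:appendix_bound1} (via Yu's lemma and Corollary~\ref{corol:logarithm}); see Remark~\ref{rem:remove_assumption}.

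\textbf{Monotonicity.} The monotonicity of $h$, and hence the convexity you invoke, must be proved analytically. The paper uses joint convexity of $\Psi$, Jensen for the kernel $P_{t-s}$, $DP=PD$, and the semigroup property. Lemma~\ref{lem:derivative} is only available under boundedness.

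\textbf{Endpoints.} Linearity of $a\mapsto H(T_a(X)|\pi_{aT})$ plus monotonicity only makes $h$ constant on the open interval $(0,1)$. Jensen already gives $h(b)\ge\phi(P_Tf(1))=h(0)$ for every $b$, so to reach a Jensen equality case you need $\lim_{b\to0^+}h(b)=h(0)$. The paper obtains this by Vitali's theorem, using the uniform integrability of Corollary~\ref{corollary:UI}. At the other end, $h(1)=\E_{\pi_T}\Psi(f,Df)$ may a priori be infinite; this is exactly the extra hypothesis the paper removes. So ``$\lambda_T$ deterministic'' is not available without a further argument. One option is Fatou, $h(1)\le\liminf_{b\to1^-}h(b)$ since $\Psi\ge0$, combined with monotonicity. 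The other is the paper's device of stopping at some $b<1$ and concluding by injectivity of $T_b$.
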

\begin{proof}
If $X$ is Poisson distributed, then by  \eqref{eq:thinn_poiss} and Lemma \ref{lem:calc1} one can verify the equality holds above. For the reverse implication, we apply the entropy representation formula of Proposition \ref{proposition:representation_ARS} from the Appendix; see \cite[Proposition 4.1]{ARS}. By Lemma \ref{lem:law}, $T_{a}(X) \sim P_{T(1-a)} f \dd \pi_{aT}.$ Therefore, applying Proposition \ref{proposition:representation_ARS} to the thinned random variable, we deduce
\begin{align*}
      H ( T_{a}(X) | \pi_{aT} )
      &= \int_{0}^{aT} \E_{\pi_{s}} \Psi \big( P_{aT-s} P_{T(1-a)} f , DP_{aT-s} P_{T(1-a)} f \big) \dd s \\
      &= \int_{0}^{aT} \E_{\pi_{s}} \Psi \big( P_{T-s} f , D P_{T-s} f \big) \dd s \\
      &\overset{s=at}{=} a \int_{0}^{T} \E_{\pi_{at}} \Psi \big( P_{T-at} f , D P_{T-at} f \big) \dd t.
\end{align*}
Hence, the equality can be written
\begin{displaymath}
    \label{eq:equality}
    \int_{0}^{T} \E_{\pi_{at}} \Psi \big( P_{T - at}f , DP_{T-at} f \big) \dd t = \int_{0}^{T} \E_{\pi_{t}} \Psi \big( P_{T - t}f , DP_{T-t} f \big) \dd t
\end{displaymath}
for some $a \in (0,1).$ The function $t \mapsto \E_{\pi_{t}} \Psi \big( P_{T - t}f , DP_{T-t} f \big)$ is nondecreasing. Indeed, we first note that $\Psi$ is convex on $\big\{ (u,v) \in \R^{2} : u >0 ,u + v > 0 \big\}.$ Therefore, by Jensen's inequality and the semigroup property, for every $0 \le s \le t \le T,$ we have
\begin{align*}
      \E_{\pi_{s}} \Psi \big( P_{T - s}f , DP_{T-s} f \big)
      &= \E_{\pi_{s}} \Psi \big( P_{t-s} P_{T - t}f , D P_{t-s} P_{T-t} f \big) \\
      &\overset{\eqref{eq:heateq}}{=} \E_{\pi_{s}} \Psi \big( P_{t-s} P_{T - t}f , P_{t-s} D P_{T - t} f \big) \\
      &\le \E_{\pi_{s}} P_{t-s} \Psi \big( P_{T - t}f , D P_{T - t} f \big)
      = \E_{\pi_{t}} \Psi \big( P_{T - t}f , D P_{T - t} f \big).
\end{align*}
Hence it follows that 
\begin{displaymath}
    \E_{\pi_{at}} \Psi \big( P_{T - at}f , DP_{T-at} f \big) = \E_{\pi_{t}} \Psi \big( P_{T - t}f , DP_{T-t} f \big)
\end{displaymath}
for almost every $t \in [0,T]$ and iterating the identity yields
\begin{equation}
\label{eq:iterate1}
\begin{aligned}
\E_{\pi_{t}} \Psi \big( P_{T-t}f , DP_{T-t}f \big)
&= \E_{\pi_{at}} \Psi \big( P_{T-at}f , DP_{T-at}f \big)
 = \cdots \\
&= \E_{\pi_{a^{n}t}} \Psi \big( P_{T-a^{n}t}f , DP_{T-a^{n}t}f \big).
\end{aligned}
\end{equation}
Since $a \in (0,1),$ we have $a^{n}t \rightarrow 0$ as $n \rightarrow \infty.$ Thus, by taking the limit in \eqref{eq:iterate1}
\begin{align*}
\E_{\pi_{t}} \Psi \big( P_{T-t}f , DP_{T-t}f \big)
&= \lim_{n \rightarrow \infty}
   \E_{\pi_{a^{n}t}} \Psi \big( P_{T-a^{n}t}f , DP_{T-a^{n}t}f \big) \\
&= \Psi \big( P_{T}f(0) , D P_{T}f(0) \big) = \Psi \big( 1 , P_{T}f(1) - 1 \big)
\end{align*}
and therefore $t \mapsto \E_{\pi_{t}} \Psi \big( P_{T-t}f , DP_{T-t}f \big)$ is constant for almost every $t \in [0,T).$ To pass to the limit, we applied Vitali's theorem. Indeed, by Corollary \ref{corollary:UI} from the Appendix, for every $t < T,$ the family
\begin{displaymath}
    \Big(  \Psi \big( P_{T - s} f , DP_{T - s} f \big) \Big)_{0 \le s \le t} 
\end{displaymath}
is uniformly integrable with respect to the corresponding family of measures $(\pi_{s})_{0 \le s \le t}.$ Fix some $b \in (0,1)$ such that 
\begin{equation}
     \label{eq:equality1}
     \E_{\pi_{bT}} \Psi \big( P_{T(1-b)}f , DP_{T(1-b)}f \big) = \Psi \big( 1 , P_{T}f(1) - 1 \big).
\end{equation}
Recalling \eqref{eq:calculation} and applying the semigroup property, the right hand side of \eqref{eq:equality1} is equal to $\phi \big( P_{T}f(1) \big) = \phi \big( P_{bT} (P_{T(1-b)}f) (1) \big).$ Hence, \eqref{eq:equality1} can be written 
\begin{align*}
\sum_{k=0}^{\infty} \phi \Bigg( \frac{P_{T(1-b)}f(k+1)}{P_{T(1-b)}f(k)} \Bigg)
P_{T(1-b)}f(k) \pi_{bT}(k)
&= \phi \big( P_{T}f(1) \big) {} \\
&\hspace{-2cm} =\phi\Bigg( \sum_{k=0}^{\infty}
\frac{P_{T(1-b)}f(k+1)}{P_{T(1-b)}f(k)}
P_{T(1-b)}f(k) \pi_{bT}(k) \Bigg).
\end{align*}
By Lemma \ref{lem:law}, $T_{b}(X) \sim P_{T(1-b)}f \dd \pi_{bT}.$ Since $\phi$ is strictly convex on $(0,\infty),$ by Jensen's equality cases, there exists $ c > 0 $ such that 
\begin{displaymath}
    P_{T(1-b)}f(k+1) = c P_{T(1-b)}f(k), \quad \forall k\in \N 
\end{displaymath}
hence recursively $P_{T(1-b)}f(k) = c^{k} P_{T(1-b)}f(0)$ for all $k \in \N.$ Thus, $P_{T(1-b)}f(0) = e^{bT(1-c)}$ and for all $k \in \N$
\begin{displaymath}
    \p \big( T_{b}(X) = k \big) =  c^{k} e^{bT(1-c)} \pi_{bT}(k) = \pi_{bcT}(k).
\end{displaymath}
For any $a \in (0,1)$ the map $X \mapsto T_{a}(X)$ is injective; see \cite[Proposition 5]{HJK2}. Hence $T_{b}(X) \overset{d}{=} T_{b}(\pi_{cT})$ implies that $X \sim \pi_{cT}$ and this yields the reverse implication.
\end{proof}
The same approach can be used to study the equality cases of Fisher information of the thinned random variable. For this, we need the following calculation.
\begin{lemma}
    \label{lem:calc2}
    \begin{displaymath}
        I \big( \pi_{x} | \pi_{y} \big) = \frac{(x-y)^{2}}{y}, \quad \forall x,y>0.
    \end{displaymath}
\end{lemma}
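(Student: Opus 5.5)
Since $\mu=\pi_x$ has density with respect to $\pi_y$ equal to $f(k)=\pi_x(k)/\pi_y(k)=e^{y-x}(x/y)^{k}$, I compute directly from the definition \eqref{eq:FI}. The plan is to reduce everything to one discrete derivative and one expectation under $\pi_y$.

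First, compute the discrete derivative. We have
\begin{displaymath}
Df(k)=f(k+1)-f(k)=f(k)\Big(\frac{x}{y}-1\Big),
\end{displaymath}
so that
\begin{displaymath}
\frac{(Df(k))^{2}}{f(k)}=\Big(\frac{x-y}{y}\Big)^{2} f(k).
\end{displaymath}
Since $f>0$ everywhere, the convention $\frac{0}{0}=0$ plays no role.

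Next, integrate against $\pi_y$ and use $\int_{\N} f\,\dd\pi_y=1$, which holds because $f\pi_y=\pi_x$ is a probability measure. This gives
\begin{displaymath}
I(\pi_x|\pi_y)=y\Big(\frac{x-y}{y}\Big)^{2}=\frac{(x-y)^{2}}{y}.
\end{displaymath}

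As a consistency check, the same value follows from \eqref{eq:FI_stoch}. Here $\lambda_y=f(X_y+1)/f(X_y)=x/y$ is deterministic, so $y\,\E(\lambda_y-1)^2$ equals the same quantity. There is no real obstacle; the only point needing care is verifying the explicit formula for the density $f$, which is immediate from the Poisson mass functions.
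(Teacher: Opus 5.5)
Your proof is correct and is exactly the direct computation the paper intends; the paper states this lemma as a routine calculation and does not write it out. Since the density $f(k)=e^{y-x}(x/y)^{k}$ is log-linear, you get $(Df)^{2}/f=\bigl(\tfrac{x-y}{y}\bigr)^{2}f$, and integrating against $\pi_y$ with the prefactor $y$ gives $(x-y)^{2}/y$.
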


\begin{proposition}
    Let $X$ be a random variable with density $f$ with respect to $\pi_{T}.$ Assume that $f : \N \rightarrow ( 0, \infty)$ satisfies $\E_{\pi_{T}} \big( \frac{(Df)^{2}}{f} \big) < \infty.$ Then, equality holds in Theorem \ref{thm:FIsc}, namely
    \begin{displaymath}
        I( T_{a}(X) | \pi_{aT}) = a I(X | \pi_{T} )
    \end{displaymath}
    for every $a \in (0,1)$ if and only if $f(k)=e^{\beta k + \gamma}$ for some $\beta ,\gamma \in \R$ (that is $X$ is Poisson distributed).
\end{proposition}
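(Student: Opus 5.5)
The "if" direction is a direct computation. If $X \sim \pi_{cT}$ for some $c>0$, then by \eqref{eq:thinn_poiss} we have $T_a(X)\sim \pi_{acT}$. Lemma \ref{lem:calc2} then gives
\[
I(T_a(X)|\pi_{aT}) = \frac{(acT-aT)^2}{aT} = a\,\frac{(cT-T)^2}{T} = a\,I(X|\pi_T)
\]
for every $a\in(0,1)$. The density of $\pi_{cT}$ with respect to $\pi_T$ is $c^k e^{T(1-c)}$, which has exactly the form $e^{\beta k+\gamma}$. Conversely, every such density normalizes to one of this type.

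For the "only if" direction, I would use the hypothesis for all $a$ and differentiate at $a=1^-$, avoiding the stochastic representation, which needs boundedness. Set $g_a := P_{T(1-a)}f$. By Lemma \ref{lem:law},
\[
J(a):=I(T_a(X)|\pi_{aT}) = aT\,\E_{\pi_{aT}}\frac{(Dg_a)^2}{g_a}.
\]
Define $u(t):=\E_{\pi_t}\Psi_2(P_{T-t}f,DP_{T-t}f)$, where $\Psi_2(u,v)=v^2/u$ is jointly convex and $1$-homogeneous. The Jensen/semigroup argument from the previous proof shows that $u$ is nondecreasing. Since $J(a)=aT\,u(aT)$, the hypothesis $J(a)=aJ(1)$ for all $a$ says $u(aT)=u(T)$ for all $a\in(0,1)$, so $u$ is constant on $(0,T]$.

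Next I would identify the value of this constant as $t\to0$. By Vitali's theorem, in the same way as in the entropy equality proof but now with $\Psi_2$, we get $u(t)\to \Psi_2(P_Tf(0),DP_Tf(0)) = (P_Tf(1)-1)^2$. This uses $P_Tf(0)=1$, and it needs a uniform integrability statement analogous to Corollary \ref{corollary:UI}; I would prove it by domination, writing $(Dg)^2/g\le$ a convex combination controlled through the Jensen step. Once that is in hand, constancy gives
\[
\sum_k \frac{(Dg_b(k))^2}{g_b(k)}\pi_{bT}(k) = \Big(\sum_k Dg_b(k)\,\pi_{bT}(k)\Big)^2
\]
for a fixed $b\in(0,1)$, using $\sum_k g_b\pi_{bT}=1$. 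Writing $r(k)=g_b(k+1)/g_b(k)$, the left side is $\E[(r(Y)-1)^2]$ with $Y\sim T_b(X)$, and the right side is $(\E[r(Y)-1])^2$. So $\Var(r(Y))=0$, and since $g_b>0$ gives full support, $r$ is a constant $c$. As before this yields $T_b(X)\sim\pi_{bcT}$, and injectivity of thinning \cite[Proposition 5]{HJK2} gives $X\sim\pi_{cT}$.

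The main obstacle is the limit $t\to0$, that is, uniform integrability of $(Dg)^2/g$ under $\pi_t$. If that is awkward, an alternative avoids it. Pick any $0<s<t$ and use the equality case in the Jensen step $\Psi_2(P_{t-s}g,P_{t-s}Dg)\le P_{t-s}\Psi_2(g,Dg)$. Here $\Psi_2$ is strictly convex except along rays, so equality forces $Dg/g$ to be constant on the support of each $\pi_{t-s}(\cdot-k)$, which is all of $\{k,k+1,\dots\}$. Taking $k=0$ gives $DP_{T-t}f = (c-1)P_{T-t}f$ everywhere, and the conclusion then follows as above without any limit.
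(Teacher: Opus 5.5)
Your main route is essentially the paper's proof: constancy of $u(t)=\E_{\pi_t}\bigl[(DP_{T-t}f)^2/P_{T-t}f\bigr]$, the limit $a\to0^+$ via Vitali, and the equality case of Jensen for $x\mapsto x^2$. Your uniform integrability sketch should be made precise as the paper does. Use $\frac{(DP_{T(1-a)}f)^2}{P_{T(1-a)}f}\le P_{T(1-a)}\frac{(Df)^2}{f}$ together with $\E_{\pi_{aT}}\bigl(P_{T(1-a)}h-M\bigr)_+\le \E_{\pi_T}(h-M)_+$ for $h=(Df)^2/f$, which follows from convexity of $x\mapsto (x-M)_+$; this is what handles the varying measures $\pi_{aT}$. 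You also missed a simplification. Since $u$ is constant on all of $(0,T]$, you may take $b=1$. The paper applies the zero-variance argument directly to $f(k+1)/f(k)$ under $\pi_T$ and gets $f(k+1)=cf(k)$ immediately, so injectivity of thinning is not needed. Your detour through $b\in(0,1)$ is correct but superfluous. So is the monotonicity of $u$, since the hypothesis for every $a$ gives constancy directly, and the announced differentiation at $a=1^-$ never actually enters.

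Your alternative argument is genuinely different and in fact stronger. Since $\E_{\pi_{aT}}P_{T(1-a)}h=\E_{\pi_T}h$, the hypothesis says that the two sides of the pointwise Cauchy--Schwarz inequality
\[
\frac{\bigl(DP_{T(1-a)}f(k)\bigr)^2}{P_{T(1-a)}f(k)}\le P_{T(1-a)}\Big(\frac{(Df)^2}{f}\Big)(k)
\]
have the same finite $\pi_{aT}$-expectation. As $\pi_{aT}$ charges every $k$, equality holds at $k=0$. The equality case of Cauchy--Schwarz, with $\pi_{T(1-a)}(n)>0$ for all $n$, then gives $Df/f$ constant on $\N$. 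This avoids the limit $a\to0^+$, and with it Vitali, uniform integrability, and injectivity of thinning. It also uses $u(aT)=u(T)$ for a single $a$ only. It therefore shows that equality in Theorem~\ref{thm:FIsc} for one $a\in(0,1)$ already forces $X$ to be Poisson. The paper's limiting argument, by contrast, needs equality for arbitrarily small $a$.
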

\begin{proof}
If $X$ is Poisson distributed, then equality follows from \ref{eq:thinn_poiss} and Lemma \ref{lem:calc2}. Conversely, for the reverse implication, recall from Lemma \ref{lem:law} that $T_{a}(X) \sim P_{T(1-a)} f \dd \pi_{aT}.$ Since we assume that 
\begin{displaymath}
    \E_{\pi_{aT}} \Bigg( \frac{(D P_{T(1-a)}f)^{2}}{P_{T(1-a)}f} \Bigg) = \E_{\pi_{T}} \Bigg( \frac{(Df)^{2}}{f} \Bigg)
\end{displaymath}
for all $a \in (0,1),$ then by passing to the limit, we obtain
\begin{equation}
     \label{eq:equation2}
    \E_{\pi_{T}} \Bigg( \frac{(Df)^{2}}{f} \Bigg) = \lim_{a \rightarrow 0^{+}} \E_{\pi_{aT}} \Bigg( \frac{ \big(D P_{T(1-a)}f \big)^{2}}{P_{T(1-a)}f} \Bigg) =  \big( P_{T}f(1) - 1 \big)^{2}. 
\end{equation}    
To pass to the limit, we used Vitali's theorem. Note that $(x,y) \mapsto \frac{y^{2}}{x}$ is convex on $(0,\infty) \times \R,$ hence by Jensen's inequality,
\begin{displaymath}
    \frac{(DP_{T(1-a)}f)^{2}}{P_{T(1-a)f}} \le P_{T(1-a)} \Big( \frac{(Df)^{2}}{f} \Big).
\end{displaymath}
For any $M > 0,$ since $x \mapsto (x - M)_{+}$ is convex,
\begin{align*}
&\sup_{ 0 \le a \le 1 } \E_{\pi_{aT}} 
\Bigg(  P_{T(1-a)} \Big( \frac{(Df)^{2}}{f} \Big) - M \Bigg)_{+} \\
&\qquad \le \E_{\pi_{T}} 
\Bigg( \frac{(Df)^{2}}{f} - M \Bigg)_{+} \\
& \qquad \longrightarrow 0
\end{align*}
as $M \rightarrow \infty.$ The final limit vanishes because $\frac{(Df)^{2}}{f}$ is $\pi_{T}$--integrable by assumption. Thus, the smaller family $ \frac{(DP_{T(1-a)}f)^{2}}{P_{T(1-a)f}} $ is also uniformly integrable with respect to the corresponding measures. Consequently, \eqref{eq:equation2} may be written 
\begin{displaymath}
    \sum_{k=0}^{\infty}  \Bigg( \frac{f(k+1)}{f(k)} - 1 \Bigg)^{2} f(k) \pi_{T}(k) = \Bigg( \sum_{k=0}^{\infty} \Bigg(\frac{f(k+1)}{f(k)} - 1 \Bigg) f(k) \pi_{T}(k) \Bigg)^{2} 
\end{displaymath}
and since $x \mapsto x^{2}$ is strictly convex on $(0,\infty),$ by Jensen's equality cases there exists $ c > 0$ such that 
\begin{displaymath}
    f(k+1) = c f(k), \quad \forall k\in \N 
\end{displaymath}
and the result now follows.
\end{proof}

\subsubsection{Size-biasing operation} 
\label{sss:size_bias}

We conclude with a subsection that further explores the connection between our results and the existing literature. Yu in \cite{Y} proves that, under the assumption that $X$ has finite support and $T=\E X $, the following result holds
\begin{equation}
    \label{eq:Yau_deriv}
    \frac{d}{da} \big( H( T_{a}(X) | \pi_{aT} ) \big) = T H \big( T_{a} (S(X)) | T_{a}(X) \big).
\end{equation}
The author also asks whether this assumption can be relaxed. We provide a partial answer to this question here. 
\begin{corollary}
     \label{corol:derivative}
     Let $T=\E X$ and $X$ be a random variable with density $f$ with respect to $\pi_{T}.$ Assume that $f$ is bounded away from zero and infinity and $H(X|\pi_{T}) < \infty.$ Then, for every $a$ in $[0,1],$ we have
    \begin{displaymath}
        \frac{d}{da} \big(  H  ( T_{a}(X) | \pi_{aT} ) \big) = T H \big( T_{a} (S(X)) | T_{a}(X) \big) .
    \end{displaymath}
\end{corollary}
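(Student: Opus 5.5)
By Lemma \ref{lem:derivative}, which applies under exactly these hypotheses, we already have
\[
\frac{d}{da}H(T_a(X)|\pi_{aT}) = T\,\E\phi(\lambda_{aT}).
\]
So it suffices to show the pointwise identity $\E\phi(\lambda_{aT}) = H\big(T_a(S(X))\,|\,T_a(X)\big)$ for every $a\in[0,1]$. The plan is to compute both laws explicitly in terms of the semigroup and compare.

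\emph{Step 1: the law of $X_{aT}$ and of $\lambda_{aT}$.} By Lemma \ref{lem:process} and Lemma \ref{lem:law}, $X_{aT}\sim T_a(X)\sim g\,\dd\pi_{aT}$ with $g:=P_{T(1-a)}f$. By \eqref{eq:minimizer}, $\lambda_{aT} = g(X_{aT}+1)/g(X_{aT})$. Hence
\[
\E\phi(\lambda_{aT}) = \sum_{k}\phi\Big(\frac{g(k+1)}{g(k)}\Big)g(k)\pi_{aT}(k).
\]

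\emph{Step 2: the law of $T_a(S(X))$.} Lemma \ref{lem:law_siaze_bias} gives $S(X)\sim f(\cdot+1)\,\dd\pi_T$. This density is not normalised against $\pi_T$ in the usual sense; its total mass is $P_Tf(1)=1$ since $T=\E X$. Lemma \ref{lem:law} only uses the density through the computation shown in its proof, so it applies verbatim and gives $T_a(S(X))\sim P_{T(1-a)}[f(\cdot+1)]\,\dd\pi_{aT}$. The semigroup \eqref{eq:semigroup} commutes with shifts, so $P_{T(1-a)}[f(\cdot+1)](k)=g(k+1)$. Therefore the density of $T_a(S(X))$ with respect to $T_a(X)$ is $g(k+1)/g(k)$.

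\emph{Step 3: identification.} Write $r(k)=g(k+1)/g(k)$. Then
\[
H\big(T_a(S(X))\,|\,T_a(X)\big)=\sum_k r(k)\log r(k)\,g(k)\pi_{aT}(k).
\]
Since $\phi(r)=r\log r-r+1$, this differs from $\E\phi(\lambda_{aT})$ by
\[
\sum_k\big(r(k)-1\big)g(k)\pi_{aT}(k)=\E\lambda_{aT}-1 .
\]
By Lemma \ref{lem:martingale}, $\E\lambda_{aT}=\E X/T=1$, so the difference vanishes. Equivalently, both measures are probability measures, namely $\sum_k g(k+1)\pi_{aT}(k)=P_Tf(1)=1$.

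\emph{Main obstacle.} The main technical point is justifying the manipulations when the support is infinite. Because $f$ is bounded away from zero and infinity, $g$ is as well, so $r$ is bounded and positive. This makes all the series absolutely convergent and lets us split $\phi(r)$ into $r\log r$, $-r$ and $+1$ without issue. The derivative identity itself, including at the endpoints $a=0,1$, is inherited from Lemma \ref{lem:derivative}. At $a=0$ both sides equal $T\phi(P_Tf(1))=0$, consistent with $T_0(\cdot)=0$.
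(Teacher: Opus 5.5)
Your proposal is correct and follows essentially the same route as the paper: identify $T_a(S(X))\sim P_{T(1-a)}f(\cdot+1)\,\dd\pi_{aT}$ via Lemmas \ref{lem:law_siaze_bias} and \ref{lem:law}, write the relative entropy as $\sum_k r(k)\log r(k)\,g(k)\pi_{aT}(k)$, use $\sum_k (r(k)-1)g(k)\pi_{aT}(k)=0$ (equivalently $P_Tf(1)=1$) to rewrite it as $\E\phi(\lambda_{aT})$, and conclude with Lemma \ref{lem:derivative}. One small correction: $f(\cdot+1)$ is a genuine probability density with respect to $\pi_T$, precisely because $P_Tf(1)=1$, so Lemma \ref{lem:law} applies to it directly and no caveat is needed.
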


\begin{proof}
By Lemma \ref{lem:law_siaze_bias}, we have $S(X) \sim f( \cdot +1) \dd \pi_{T}$ and by Lemma \ref{lem:law}, we get $T_{a}(X) \sim P_{T(1-a)} f \dd \pi_{aT}$ and $T_{a}(S(X)) \sim P_{T(1-a)} f(\cdot + 1) \dd \pi_{aT}.$ Hence, we have 
\begin{displaymath}
    T_{a}(S(X))\sim P_{T(1-a)}f (\cdot+1) \dd \pi_{aT} = \frac{P_{T(1-a)}f (\cdot+1)}{P_{T(1-a)}f } P_{T(1-a)}f \dd \pi_{aT}
\end{displaymath}
and 
\begin{align*}
H \big(T_{a}(S(X)) | T_{a}(X) \big)
&= \sum_{k \in \N} \frac{P_{T(1-a)}f(k+1)}{P_{T(1-a)}f(k)}
\log \Bigg( \frac{P_{T(1-a)}f(k+1)}{P_{T(1-a)}f(k)} \Bigg)
\, P_{T(1-a)}f(k)\, \pi_{aT}(k) \\[0.4em]
&= \sum_{k \in \N } \phi \Bigg( \frac{P_{T(1-a)}f(k+1)}{P_{T(1-a)}f(k)} \Bigg)
\, P_{T(1-a)}f(k)\, \pi_{aT}(k) \\[0.4em]
&= \mathbb{E}\!\left(\phi(\lambda_{aT})\right).
\end{align*}
In the above calculation, we used 
\begin{align*}
\sum_{k \in \N} \Bigg( -\frac{P_{T(1-a)}f(k+1)}{P_{T(1-a)}f(k)} + 1 \Bigg)
P_{T(1-a)}f(k)\, \pi_{aT}(k) = 0 .
\end{align*}
So, Lemma \ref{lem:derivative} recovers Yu's formula \eqref{eq:Yau_deriv}.

\end{proof}

\begin{remark}
    As explained in Remark \ref{remark:existence}, Corollary \ref{corol:derivative} continues to hold under the assumption that $f$ is log-concave.
\end{remark}

\section*{Appendix}

The following well--known result can be found in \cite[Chapter 3]{CL}.
\begin{proposition} [Variational formula for entropy] 
       \label{prop:variational}
      Let $(\Omega, \mathcal{F},\mu)$ be a probability space, and let $f : \Omega \rightarrow (0,\infty) $ be a $\mu$--integrable function such that $\int_{\Omega} f \dd \mu = 1.$ Then 
      \begin{displaymath}
          \Ent_{ \mu } (f) = \sup_{g} \Big\{ \int f g \dd \mu - \log \Big( \int e^{g} \dd \mu \Big) \Big\}
      \end{displaymath}
      where the supremum is taken over measurable and bounded functions $g : \Omega \rightarrow \R.$

\end{proposition}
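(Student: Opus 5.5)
We prove the two inequalities $\Ent_{\mu}(f)\ge \int fg\,\dd\mu-\log\int e^{g}\dd\mu$ for every bounded measurable $g$, and then exhibit near-optimal choices of $g$.

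\emph{Lower bound (every $g$ is admissible).} Fix a bounded measurable $g$ and set $Z:=\int e^{g}\dd\mu\in(0,\infty)$. Define the probability measure $\nu:=\frac{e^{g}}{Z}\mu$, which is mutually absolutely continuous with $\mu$ because $g$ is bounded. Since $\int f\,\dd\mu=1$, $\Ent_{\mu}(f)=\int f\log f\,\dd\mu$, and we can write
\begin{displaymath}
\int f\log f\,\dd\mu-\int fg\,\dd\mu+\log Z=\int f\log\Big(\frac{f Z}{e^{g}}\Big)\dd\mu=\int \frac{fZ}{e^{g}}\log\Big(\frac{fZ}{e^{g}}\Big)\dd\nu .
\end{displaymath}
The right-hand side is the relative entropy of $f\mu$ with respect to $\nu$. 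Because $\int \frac{fZ}{e^{g}}\dd\nu=1$, Jensen's inequality for the convex function $x\log x$ shows that it is nonnegative. This step assumes $\int f\log f\,\dd\mu$ is well defined; it is, since $f\log f\ge -1/e$. If $\Ent_\mu(f)=+\infty$ the inequality is trivial, and otherwise all integrals above are finite because $g$ is bounded.

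\emph{Upper bound (approximate attainment).} The formal optimizer is $g=\log f$, which need not be bounded. Take the truncations $g_n:=\max\{-n,\min\{n,\log f\}\}$.
\begin{itemize}
\item For the first term, $fg_n\to f\log f$ pointwise. On $\{f\ge1\}$ the functions $fg_n$ increase to $f\log f$, so monotone convergence applies there. On $\{f<1\}$ we have $|fg_n|\le |f\log f|\le 1/e$, so dominated convergence applies on a probability space. Hence $\int fg_n\,\dd\mu\to\int f\log f\,\dd\mu$, including the case where this limit is $+\infty$.
\item For the second term, $e^{g_n}\le \max\{f,1\}\le f+1$, which is integrable, and $e^{g_n}\to f$ pointwise. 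Dominated convergence gives $\int e^{g_n}\dd\mu\to\int f\,\dd\mu=1$, so $\log\int e^{g_n}\dd\mu\to0$.
\end{itemize}
Therefore the supremum is at least $\lim_n\big(\int fg_n\,\dd\mu-\log\int e^{g_n}\dd\mu\big)=\Ent_\mu(f)$. Combined with the lower bound, this gives equality, including when both sides are $+\infty$.

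The main obstacle is the limit in the first bullet, where $f\log f$ may fail to be integrable. The split into $\{f\ge1\}$ and $\{f<1\}$ handles it: monotone convergence on the first set and the bound $|f\log f|\le1/e$ on the second.
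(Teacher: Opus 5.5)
Your proof is correct and complete, including the case $\Ent_\mu(f)=+\infty$. The tilting by $e^{g}/Z$ shows the functional never exceeds $\Ent_\mu(f)$ for bounded $g$. The truncations of $\log f$ show that the supremum reaches it, using monotone convergence on $\{f\ge 1\}$, the bound $|f\log f|\le 1/e$ on $\{f<1\}$, and the domination $e^{g_n}\le f+1$.

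The paper gives no proof of this proposition and only quotes it from \cite[Chapter 3]{CL}. Your argument is the standard proof of this duality, so there is nothing to reconcile.

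One small point: the paper later uses unbounded $g$, namely $g(n)=\lambda n\log n$ and $g(n)=n$ in Corollary \ref{corol:logarithm}. The inequality needed there follows from your bounded case applied to $\min\{g,m\}$, letting $m\to\infty$ by monotone convergence, since these $g$ are nonnegative.
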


\begin{corollary}
     \label{corol:logarithm}
    Let $X$ have density $f$ with respect to $\pi_{T}.$ If $H(T_{a}(X) |\pi_{aT}) < \infty$ for some $a \in (0,1],$ then
    \begin{displaymath}
        a\E X = \E T_{a}(X) < \infty
    \end{displaymath}
    and
    \begin{displaymath}
         \E T_{a}(X) \log T_{a}(X) < \infty .
    \end{displaymath}
\end{corollary}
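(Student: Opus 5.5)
The plan is to apply Proposition~\ref{prop:variational} to the density of $T_a(X)$ with respect to $\pi_{aT}$, namely $g_a := P_{T(1-a)}f$ from Lemma~\ref{lem:law}, using well-chosen bounded test functions. Set $\mu := \pi_{aT}$ and $\Ent_{\mu}(g_a)=H(T_a(X)|\pi_{aT})<\infty$. For any bounded $g$ we get
\begin{displaymath}
\E g(T_a(X)) \le H(T_a(X)|\pi_{aT}) + \log \E_{\pi_{aT}} e^{g}.
\end{displaymath}
Monotone convergence will then extend this to nonnegative unbounded $g$ for which $\E_{\pi_{aT}} e^{g}<\infty$. For such $g$ we truncate $g_M=\min(g,M)$, and both sides are monotone in $M$.

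For the first claim, I take $g(k)=k$. Then $\log \E_{\pi_{aT}} e^{k} = aT(e-1)<\infty$, which gives $\E T_a(X)<\infty$. The identity $\E T_a(X) = a\E X$ follows from the definition of thinning by conditioning on $X$ and using Tonelli, since $\E[\sum_{i\le X}B_i\mid X]=aX$. This holds in $[0,\infty]$, so finiteness of the left side forces $\E X<\infty$.

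For the second claim, $g(k)=k\log k$ is too large: $\E_{\pi_{aT}} e^{k\log k}=\sum_k k^k (aT)^k e^{-aT}/k!$ diverges. So I will use a scaled version $g(k)=\tfrac12 k\log k$ (with $0\log0=0$). By Stirling, $k!\ge (k/e)^k$, so
\begin{displaymath}
\E_{\pi_{aT}} e^{\frac12 k\log k} \le e^{-aT}\sum_k \frac{k^{k/2}(aT)^k e^k}{k^k} = e^{-aT}\sum_k \Big(\frac{eaT}{\sqrt k}\Big)^k<\infty.
\end{displaymath}
Hence $\tfrac12\E T_a(X)\log T_a(X) \le H(T_a(X)|\pi_{aT}) + C(aT)<\infty$.

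The main obstacle is choosing a test function that is summable against $\pi_{aT}$ while still dominating $k\log k$. The factor $\tfrac12$ (any constant below $1$ works) handles this. Beyond that, the only care needed is the monotone-convergence passage from bounded to unbounded test functions, which is routine because both test functions are nonnegative.
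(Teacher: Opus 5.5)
Your proof is correct and is essentially the paper's argument: the entropy duality of Proposition~\ref{prop:variational}, applied to $P_{T(1-a)}f$ against $\pi_{aT}$ with test functions $n$ and $\lambda n\log n$, plus conditioning on $X$ to get $a\E X=\E T_a(X)$. The paper allows any $\lambda\in(0,1)$, while you take $\lambda=\tfrac12$ and verify summability via $k!\ge (k/e)^k$. Your truncation and monotone-convergence step for unbounded test functions makes explicit a detail the paper skips. One small inaccuracy, which does not affect the argument: the series $\sum_k k^k(aT)^k/k!$ diverges only when $aT\ge 1/e$, not always.
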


\begin{proof}
    Fix $a \in (0,1].$ By Lemma \ref{lem:law}, we have $T_{a}(X) \sim P_{T(1-a)} f \dd \pi_{aT}$ and it follows that $\Ent_{\pi_{aT}} (P_{T(1-a)} f) = H (T_{a}(X) | \pi_{aT} ).$ Apply the duality formula of Proposition \ref{prop:variational} for $g:\N \rightarrow \R $ given by $g(n) = \lambda n \log n,$ with $\lambda \in (0,1),$ $P_{T(1-a)} f$ instead of $f$ and measure $\pi_{aT}.$ Therefore,
\begin{displaymath}
    \E T_{a}(X) \log T_{a}(X) \le \frac{1}{\lambda} \Big[ H(T_{a}(X) | \pi_{aT} ) + \log \E_{\pi_{aT}} ( e^{g} ) \Big]
\end{displaymath}
and the last term is finite provided that $\lambda < 1 .$ Similarly, if we take $g(n) = n,$ we obtain from the moment generating function
\begin{displaymath}
    \E T_{a}(X) \le H (T_{a}(X)| \pi_{aT}) + aT(e - 1) < \infty.
\end{displaymath}
    Lastly, the equality $a\E X = \E T_{a}(X)$ follows by conditioning on $X$ and using the definition of the thinning operation.
\end{proof}

We will also use the following standard inequality, which can be found in \cite[Theorem 2.7.1]{CT}

\begin{proposition} [Log--sum inequality]
    \label{prop:log_sum} For non--negative numbers $(a_{i})_{i\ge 1}$ and $(b_{i})_{i \ge 1},$
    \begin{displaymath}
        \sum_{i \ge 1 } a_{i} \log \Bigg( \frac{\sum_{i \ge 1 } a_{i}}{\sum_{i \ge 1 } b_{i}} \Bigg) \le \sum_{i \ge 1 } a_{i} \log \Big( \frac{a_{i}}{b_{i}} \Big) 
    \end{displaymath}
    provided that the sums of $a_{i}$ and $b_{i}$ are finite.
\end{proposition}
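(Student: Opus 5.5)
The inequality is the log-sum inequality (Proposition~\ref{prop:log_sum}). I would prove it by reducing to the convexity of $\psi(x):=x\log x$ on $[0,\infty)$, with $0\log 0=0$, and applying Jensen's inequality with weights proportional to the $b_i$.

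\textbf{Setup.} Put $A:=\sum_i a_i$ and $B:=\sum_i b_i$, both finite by hypothesis.

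\textbf{Degenerate cases.} These are fixed by the usual conventions $0\log(0/b)=0$ and $a\log(a/0)=+\infty$ for $a>0$.
\begin{itemize}
\item If some $a_i>0$ has $b_i=0$, the right-hand side is $+\infty$ and there is nothing to prove.
\item If $A=0$, both sides vanish.
\item If $B=0$, then every $a_i$ must vanish (otherwise we are in the first case), so again both sides vanish.
\end{itemize}
Hence we may assume $B>0$ and that $b_i=0$ forces $a_i=0$. The indices with $b_i=0$ then contribute zero to both sides and can be discarded.

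\textbf{Main step.} Introduce the probability weights $w_i:=b_i/B$ and the points $x_i:=a_i/b_i\ge 0$. Then $\sum_i w_i x_i = A/B$, and
\begin{displaymath}
\sum_i a_i\log\frac{a_i}{b_i}=B\sum_i w_i\,\psi(x_i).
\end{displaymath}
Jensen's inequality gives $\sum_i w_i\psi(x_i)\ge \psi\big(\sum_i w_i x_i\big)=\frac{A}{B}\log\frac{A}{B}$. Multiplying by $B$ yields exactly $A\log(A/B)$, which is the claimed bound.

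\textbf{Main obstacle.} The only delicate point is the infinite index set. The right-hand side may then be a series whose negative terms must be controlled. On the negative terms we have $a_i\log(a_i/b_i)\ge a_i - b_i$, since $\psi(x)\ge x-1$. Thus the negative parts are bounded below by $-\sum_i b_i\ge -B$, so the series is well defined in $(-\infty,+\infty]$.

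Jensen's inequality for the countable probability measure $(w_i)$ then holds directly, because $\psi$ is convex and bounded below by $-1/e$, and the mean $A/B$ is finite. Alternatively, one can apply the finite Jensen inequality to the truncations $i\le n$ and let $n\to\infty$, using continuity of $\psi$ together with $A_n\to A$ and $B_n\to B$.
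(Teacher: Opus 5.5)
Your proof is correct and is the standard argument: the paper gives no proof of its own and cites \cite[Theorem 2.7.1]{CT}, whose proof is exactly Jensen's inequality for $t\log t$ with weights $b_i/B$ at the points $a_i/b_i$. Your handling of the degenerate cases, and of the countable index set (negative parts bounded below by $-B$ via $t\log t\ge t-1$, followed by countable Jensen or truncation), correctly fills in details that the finite-sum reference leaves implicit.
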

We use here again the convention that $0 \log 0 = 0,$ $a \log \frac{a}{0} = \infty$ if $a > 0$ and
$0 \log \frac{0}{0} = 0.$ These follow from continuity. Now, we can prove the main result of the appendix.

\begin{proposition}
    \label{prop:appendix_bound1}
    Let $X$ have density $f$ with respect to $\pi_{T}$ and $H(T_{1-a} (X) | \pi_{(1-a)T} ) < \infty$ for some $a < 1.$ Then 
    \begin{displaymath}
        \E_{\pi_{aT}} \Psi \big( P_{T(1-a)} f , DP_{T(1-a)} f \big) < \infty.
    \end{displaymath}
    Moreover, if $H( X | \pi_{T} ) < \infty,$ then 
    \begin{displaymath}
        \E_{\pi_{t}} \Psi \big( P_{T (1-a) } f , DP_{T (1-a) } f \big) < \infty
    \end{displaymath}
    for all $ a < 1.$
\end{proposition}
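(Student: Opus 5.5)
The plan is to write the quantity as a sum over $k$, bound the only dangerous term with the log-sum inequality (Proposition \ref{prop:log_sum}), and recognise the result as a moment of the complementary thinning $T_{1-a}(X)$. Corollary \ref{corol:logarithm} then controls that moment. Throughout, set $g := P_{T(1-a)}f$ and $c := (1-a)T$. By Lemma \ref{lem:law}, $T_a(X) \sim g \,\dd \pi_{aT}$.

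\textbf{Step 1: split the integrand.} By \eqref{eq:calculation},
\begin{displaymath}
\E_{\pi_{aT}} \Psi(g,Dg) = \sum_{k} \Big[ g(k+1)\log\frac{g(k+1)}{g(k)} - g(k+1) + g(k) \Big] \pi_{aT}(k).
\end{displaymath}
The linear parts are harmless:
\begin{itemize}
\item $\sum_k g(k)\pi_{aT}(k) = 1$;
\item $\sum_k g(k+1)\pi_{aT}(k) = P_{aT}g(1) = P_T f(1) = \E X / T$ by the semigroup property.
\end{itemize}
The second sum is finite because Corollary \ref{corol:logarithm}, applied with $1-a$ in place of $a$, gives $(1-a)\E X = \E T_{1-a}(X) < \infty$. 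So it remains to bound $\sum_k g(k+1)\log\frac{g(k+1)}{g(k)}\,\pi_{aT}(k)$ from above. Since $\Psi \ge 0$, an upper bound suffices.

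\textbf{Step 2: log-sum bound.} Using $\pi_c(m-1) = \frac{m}{c}\pi_c(m)$, rewrite
\begin{displaymath}
g(k+1) = \sum_{m} f(k+m)\,\frac{m}{c}\,\pi_c(m), \qquad g(k) = \sum_m f(k+m)\pi_c(m).
\end{displaymath}
Apply Proposition \ref{prop:log_sum} with $a_m = f(k+m)\frac{m}{c}\pi_c(m)$ and $b_m = f(k+m)\pi_c(m)$. This yields
\begin{displaymath}
g(k+1)\log\frac{g(k+1)}{g(k)} \le \sum_m f(k+m)\pi_c(m)\, \frac{m}{c}\log\frac{m}{c}.
\end{displaymath}
Both sums are finite, since $g$ is finite pointwise (as $f$ is $\pi_T$-integrable).

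\textbf{Step 3: identify the double sum.} Multiply by $\pi_{aT}(k)$ and sum over $k$. The identity
\begin{displaymath}
\pi_{aT}(k)\pi_{c}(m) = \pi_T(k+m)\binom{k+m}{m}(1-a)^m a^k
\end{displaymath}
shows that $\sum_{k,m} f(k+m)\pi_{aT}(k)\pi_c(m)\,h(m)$ equals $\E\, h(B)$. Here $B$ is the number of successes when $X$ is split binomially with parameter $1-a$, i.e.\ $B \overset{d}{=} T_{1-a}(X)$, and $h(m) = \frac{m}{c}\log\frac{m}{c}$.

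Since $h$ is bounded below by $-1/e$, Tonelli's theorem applies to the positive part, and
\begin{displaymath}
\E\, h(T_{1-a}(X)) = \frac{1}{c}\Big(\E T_{1-a}(X)\log T_{1-a}(X) - \E T_{1-a}(X)\log c\Big).
\end{displaymath}
Both terms are finite by Corollary \ref{corol:logarithm}, which proves the first claim. The main care needed is in this step: checking the binomial identity and justifying the interchange of sums despite the sign changes of $h$. The lower bound on $h$ handles both.

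\textbf{Step 4: the moreover part.} If $H(X|\pi_T) < \infty$, then Yu's Thinning Lemma (Theorem \ref{thm:Yu}) gives
\begin{displaymath}
H(T_{1-a}(X)\,|\,\pi_{(1-a)T}) \le (1-a)H(X|\pi_T) < \infty
\end{displaymath}
for every $a < 1$. The first part then applies for every such $a$. Here I read the measure in the moreover part as $\pi_{aT}$, matching the first part.
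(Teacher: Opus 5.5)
Your proof is correct and follows essentially the same route as the paper. You split $\Psi$ via \eqref{eq:calculation} and bound the logarithmic term with the log-sum inequality by $\sum_m f(k+m)\pi_c(m)\frac{m}{c}\log\frac{m}{c}$. You then recognise its $\pi_{aT}$-average as a moment of $T_{1-a}(X)$, controlled by Corollary \ref{corol:logarithm}, and get the moreover part from the Thinning Lemma, reading $\pi_t$ as $\pi_{aT}$ as the paper does. Three details differ only cosmetically from the paper. You take $a_0=0$ in the log-sum inequality rather than first discarding the $n=0$ term of the denominator. You identify the double sum by the direct binomial identity rather than through Lemma \ref{lem:law}. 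You also justify the interchange of sums explicitly via $h\ge -1/e$, which the paper omits.
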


\begin{proof}
Fix $a < 1$ and a straightforward calculation yields
\begin{align}
\Psi \big( P_{T(1-a)}f(k), D P_{T(1-a)}f(k) \big)
&= P_{T(1-a)}f(k+1)
\log \Bigg( \frac{P_{T(1-a)}f(k+1)}{P_{T(1-a)}f(k)} \Bigg) \notag \\
&\quad - D P_{T(1-a)}f(k).
\label{eq:psi}
\end{align}
Apply log-sum inequality to upper bound the first sum
\begin{align*}
& P_{T(1-a)}f(k+1) 
\log \Bigg( \frac{P_{T(1-a)}f(k+1)}{P_{T(1-a)}f(k)} \Bigg) \\
&= \sum_{n \in \N} f(k+1+n) \pi_{T(1-a)}(n) 
\log \Bigg( 
\frac{\sum_{n \in \N} f(k+1+n) \pi_{T(1-a)}(n)}
{\sum_{n \in \N} f(k+n) \pi_{T(1-a)}(n)} 
\Bigg) \\
&\le \sum_{n \in \N} f(k+1+n) \pi_{T(1-a)}(n) 
\log \Bigg( 
\frac{\sum_{n \in \N} f(k+1+n) \pi_{T(1-a)}(n)}
{\sum_{n \in \N} f(k+1+n) \pi_{T(1-a)}(n+1)} 
\Bigg) \\
&\le \sum_{n \in \N} f(k+1+n) \pi_{T(1-a)}(n) 
\log \Big( \frac{n+1}{T(1-a)} \Big) \\
&= \sum_{n \in \N}  \log (n+1) f(k+1+n) \pi_{T(1-a)}(n) 
- \log ( T(1-a) ) P_{T(1-a)}f(k+1) \\
&= \frac{1}{T(1-a)} \sum_{n \in \N} n \log (n) f(k+n) \pi_{T(1-a)}(n) 
- \log ( T(1-a) ) P_{T(1-a)}f(k+1).
\end{align*}
Hence, we obtain
\begin{align}
\Psi \big( P_{T(1-a) }f(k), D P_{T(1-a) }f(k) \big)
&\le P_{T(1-a)}f(k) \notag \\
&\quad + \frac{1}{T(1-a)} 
\sum_{n \in \N} n \log (n) f(k+n) \pi_{T(1-a)}(n) \notag \\
&\quad - \big( \log ( T(1-a) ) + 1\big) P_{T(1-a)}f(k+1).
\label{eq:calculation_entropy}
\end{align}
Recall from Lemma \ref{lem:law} that $T_{a}(X) \sim P_{T(1-a)}f \dd \pi_{aT}.$ Hence, taking expectation in the first term with respect to the measure $\pi_{aT},$ yields $\E_{\pi_{aT}} \big( P_{T(1-a)}f \big) = 1.$ For the third term, the semigroup property gives
\begin{displaymath}
    P_{aT} \big( P_{T(1-a)}f \big) (1) = P_{T} f(1) = \frac{\E X }{T} . 
\end{displaymath}
Finally, taking the expectation of the second term with respect to the measure $\pi_{aT},$ we obtain
\begin{align*}
\sum_{n \in \N } n \log (n) 
\Bigg( \sum_{k \in \N} f(k+n) \pi_{aT}(k) \Bigg) \pi_{T(1-a)}(n)
&= \sum_{n \in \N } n \log (n) P_{aT}(n) \pi_{T(1-a)}(n) \\
&= \E T_{1-a}(X) \log T_{1-a} (X).
\end{align*}
Putting them all together, we deduce 
\begin{align}
\E_{\pi_{aT}} \Psi \big( P_{T(1-a) }f, D P_{T(1-a) }f \big)
&\le 1 + \frac{1}{T(1-a)} 
\E T_{1-a}(X) \log T_{1-a} (X) \notag \\
&\quad - \big( \log ( T(1-a) ) + 1\big) \frac{\E X}{T}.
\label{eq:calculation_expectation_entropy}
\end{align}
The finiteness assertion follows from Corollary \ref{corol:logarithm} and estimate \eqref{eq:calculation_expectation_entropy}, while the moreover part is a consequence of the Thinning Lemma (Theorem \ref{thm:Yu}).
\end{proof}

\begin{corollary}
    \label{corollary:UI}
    Let $X$ have density $f$ with respect to $\pi_{T}.$ If $H(X|\pi_{T}) < \infty$ and $t<T,$ then
    \begin{displaymath}
        \Big(  \Psi \big( P_{T-s} f , DP_{T - s} f \big) \Big)_{0 \le s \le t} 
    \end{displaymath}
    is uniformly integrable with respect to the corresponding family of measures $(\pi_{s})_{0 \le s \le t}.$ 
\end{corollary}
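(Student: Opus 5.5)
The plan is to dominate the whole family by a single object built from $H(X|\pi_T)$, exactly as in the Fisher-information equality proof. We have $t<T$ and $0\le s\le t$, and we write $g_s := \Psi(P_{T-s}f, DP_{T-s}f)$, viewed as a function on $\N$ under $\pi_s$. Since $\Psi\ge 0$, it suffices to show
\begin{displaymath}
\lim_{M\to\infty}\sup_{0\le s\le t}\E_{\pi_s}\big(g_s-M\big)_+=0 .
\end{displaymath}
Two routes are available. One is the Jensen/semigroup monotonicity used in the equality-case proof. The other, which I prefer because it gives an explicit dominating function, is the pointwise bound \eqref{eq:calculation_entropy} from the proof of Proposition \ref{prop:appendix_bound1}.

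\textbf{Step 1: pointwise domination.} Take \eqref{eq:calculation_entropy} with $T(1-a)$ replaced by $T-s$. This gives, pointwise on $\N$,
\begin{displaymath}
0\le g_s(k)\le P_{T-s}f(k) + \frac{1}{T-s}\sum_{n}n\log n\, f(k+n)\pi_{T-s}(n) + \big|\log(T-s)+1\big|\,P_{T-s}f(k+1).
\end{displaymath}
Each term on the right has the form $P_{T-s}h(k)$, or $P_{T-s}h(k+1)$, for a fixed nonnegative function $h$ independent of $s$. For the middle term, note that $n\log n\,\pi_{T-s}(n)=(T-s)\log n\,\pi_{T-s}(n-1)$. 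Hence the middle term equals $\sum_{m} \log(m+1) f(k+m+1)\pi_{T-s}(m)$, which is bounded by $P_{T-s}h_1(k)$ with $h_1(j):=f(j)\log_+ j$. The prefactors $1/(T-s)$ and $|\log(T-s)+1|$ are bounded uniformly because $T-s\ge T-t>0$, except where $\log(T-s)$ is large in absolute value. This is harmless since $T-s\in[T-t,T]$. So $g_s\le C\,P_{T-s}\tilde h$ with $\tilde h:=f+f(\cdot+1)+h_1$ and $C$ depending only on $t,T$.

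\textbf{Step 2: uniform integrability via convexity.} Because $x\mapsto (x-M)_+$ is convex and $P_{T-s}$ is a Markov average, Jensen gives
\begin{displaymath}
\E_{\pi_s}\big(C P_{T-s}\tilde h-M\big)_+\le \E_{\pi_s}P_{T-s}(C\tilde h-M)_+=\E_{\pi_T}(C\tilde h-M)_+ .
\end{displaymath}
The equality uses $\pi_s * \pi_{T-s}=\pi_T$. The shifted term $P_{T-s}f(\cdot+1)$ needs separate handling. Its $\pi_s$-law corresponds to averaging $f(\cdot+1)$ against $\pi_T$, which integrates to $\E X/T$. The right-hand side is independent of $s$, so it tends to $0$ as $M\to\infty$ provided $\tilde h\in L^1(\pi_T)$.

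\textbf{Step 3 (main obstacle): integrability of $\tilde h$.} We need $f\in L^1(\pi_T)$, which is trivial, and $\E_{\pi_T}f(\cdot+1)=\E X/T$. We also need $\E_{\pi_T}f\log_+(\cdot)=\E\log_+ X$. Both are finite by Corollary \ref{corol:logarithm} with $a=1$, since $H(X|\pi_T)<\infty$ gives $\E X<\infty$ and $\E X\log X<\infty$.

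The delicate point is whether the $\log(n+1)$ factor, after the index shift, produces $\E X\log X$ or merely $\E\log X$. If it produces $\E X\log X$, that moment is still finite, so either way the bound closes. The only real care is bookkeeping the shift $k\mapsto k+1$ inside $P_{T-s}$ consistently with the measure $\pi_s$, and ensuring the constants stay uniform for $s\le t<T$. This is exactly why $t<T$ is required: the factor $1/(T-s)$ must stay bounded.
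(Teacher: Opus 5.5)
Your proof is correct once one index shift is fixed, and it takes a different route from the paper.

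\textbf{How the paper argues.} The paper never bounds $\Psi(P_{T-s}f,DP_{T-s}f)$ at each time separately. It uses the joint convexity of $\Psi$ and $DP_{t-s}=P_{t-s}D$ to get, by Jensen, $\Psi(P_{T-s}f,DP_{T-s}f)\le P_{t-s}\Psi(P_{T-t}f,DP_{T-t}f)$. This is the monotonicity you listed as your first option. The $(x-M)_+$ argument with $\pi_s*\pi_{t-s}=\pi_t$ then reduces everything to one fact: $\Psi(P_{T-t}f,DP_{T-t}f)$ is $\pi_t$-integrable, which is Proposition~\ref{prop:appendix_bound1}.

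\textbf{How your route differs.} You apply the log-sum bound \eqref{eq:calculation_entropy} at every $s\in[0,t]$. This gives an $s$-independent majorant $C\,P_{T-s}\tilde h$ with $\tilde h\in L^1(\pi_T)$, and you push the $(x-M)_+$ argument all the way to $\pi_T$.
\begin{itemize}
\item What it buys: you avoid the convexity of $\Psi$ and the Thinning Lemma, which the paper invokes in proving Proposition~\ref{prop:appendix_bound1}. You need only $\E X<\infty$ and $\E X\log X<\infty$, from Corollary~\ref{corol:logarithm} with $a=1$.
\item What it costs: a cruder pointwise bound, and the constants must be controlled uniformly in $s$.
\item Where $t<T$ enters: after your rewriting $n\log n\,\pi_{T-s}(n)=(T-s)\log n\,\pi_{T-s}(n-1)$, the factor $1/(T-s)$ cancels exactly. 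So in your route $t<T$ is needed only for $|\log(T-s)+1|\le 1+\max\{|\log T|,|\log(T-t)|\}$, not for $1/(T-s)$ as you say.
\end{itemize}

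\textbf{The fix.} The claimed bound $\sum_{m}\log(m+1)f(k+m+1)\pi_{T-s}(m)\le P_{T-s}h_1(k)$ is false in general. Take $X\equiv j$ for an integer $j\ge 2$ with $j>T$, and $k=0$. The left side is $\log j\,f(j)\pi_{T-s}(j-1)$ and the right side is $\log j\,f(j)\pi_{T-s}(j)$. Their ratio is $j/(T-s)>1$.

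The correct step uses $\log(m+1)\le\log(k+m+1)$. This bounds the middle term by $P_{T-s}h_1(k+1)=P_{T-s}[h_1(\cdot+1)](k)$, so $\tilde h$ should contain $h_1(\cdot+1)$ rather than $h_1$. Then $\E_{\pi_T}h_1(\cdot+1)=\frac{1}{T}\E X\log X$. So the $\E X\log X$ alternative is the one that occurs, and your Step~3 closes as you anticipated.
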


\begin{proof}
Let $0 \le s \le t<T.$ By Jensen's inequality, we obtain $\Psi \big( P_{T - s}f, DP_{T-s} f \big) \le P_{t-s} \Psi \big( P_{T-t}f, DP_{T-t}f \big).$ For any $M > 0,$ since $x \mapsto (x - M)_{+}$ is convex,
\begin{align*}
&\sup_{ 0 \le s \le t } \E_{\pi_{s}} 
\big( P_{t-s} \Psi \big( P_{T-t}f, DP_{T-t}f \big) - M \big)_{+} \\
&\qquad \le \E_{\pi_{t}} 
\big( \Psi \big( P_{T-t}f, DP_{T-t}f \big) - M \big)_{+} \\
&\qquad \longrightarrow 0
\end{align*}
as $M \rightarrow \infty.$ The final limit vanishes because $\Psi \big( P_{T-t}f, DP_{T-t}f \big)$ is $\pi_{t}$--integrable by Proposition \ref{prop:appendix_bound1}. Thus, the smaller family $\big( \Psi \big( P_{T - s}f, DP_{T - s} f \big) \big)_{ 0 \le s \le t}$ is also uniformly integrable with respect to the corresponding family of measures $(\pi_{s})_{0 \le s \le t}.$ 
\end{proof}

\begin{corollary}
    \label{prop:appendix_bound2} Let $X$ have density $f$ with respect to $\pi_{T}$ and set 
    \begin{displaymath}
        f_{k}:= \min \Big \{ k , \max \Big\{ f , \frac{1}{k} \Big\} \Big\} .
    \end{displaymath}
    If $H(X|\pi_{T}) < \infty,$ then
    \begin{displaymath}
        \Big(  \Psi \big( P_{T-t} f_{k} , DP_{T - t} f_{k} \big) \Big)_{k \in \N}
    \end{displaymath}
    is uniformly integrable with respect to $\pi_{t},$ for every $t < T.$
\end{corollary}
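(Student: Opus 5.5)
Fix $t<T$. Since $\pi_t$ is a single fixed measure, it suffices to find one $\pi_t$-integrable function that dominates $\Psi(P_{T-t}f_k, DP_{T-t}f_k)$ up to factors bounded in $k$. Such a domination gives uniform integrability at once. The natural candidate is the pointwise bound \eqref{eq:calculation_entropy} from the proof of Proposition \ref{prop:appendix_bound1}, applied with $f_k$ in place of $f$ and with $T(1-a)$ replaced by $T-t$. Its derivation uses only the log-sum inequality and nonnegativity, so it holds for any nonnegative $\pi_T$-integrable function, without normalisation. We get
\begin{displaymath}
\Psi\big(P_{T-t}f_k, DP_{T-t}f_k\big)(j) \le P_{T-t}f_k(j) + \frac{1}{T-t}\sum_{n\in\N} n\log(n)\, f_k(j+n)\pi_{T-t}(n) + \big|\log(T-t)+1\big|\, P_{T-t}f_k(j+1).
\end{displaymath}
Every term on the right is monotone in $f_k$, since $n\log n\ge 0$.

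The next step uses the elementary bound $f_k\le f+1$. It gives
\begin{displaymath}
\Psi\big(P_{T-t}f_k, DP_{T-t}f_k\big) \le g := P_{T-t}(f+1) + \frac{1}{T-t}\sum_{n} n\log(n)\,(f+1)(\cdot+n)\pi_{T-t}(n) + C_t\, P_{T-t}(f+1)(\cdot+1),
\end{displaymath}
and the dominating function $g$ does not depend on $k$. It remains to show $\E_{\pi_t} g<\infty$, which we check term by term.
\begin{itemize}
\item For the first and third terms, the semigroup property gives $P_t P_{T-t}(f+1)(0) = P_T f(0) + 1 = 2$ and $P_T(f+1)(1) = \E X/T + 1$. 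The quantity $\E X$ is finite by Corollary \ref{corol:logarithm} with $a=1$, since $H(X|\pi_T)<\infty$.
\item For the middle term, exchange the order of summation exactly as in the proof of Proposition \ref{prop:appendix_bound1}. The $f$-part becomes $\E\, T_{1-t/T}(X)\log T_{1-t/T}(X)$. The constant part becomes $\E_{\pi_{T-t}}[N\log N]$ for a Poisson variable $N$, which is finite.
\end{itemize}
The first quantity in the second bullet is finite by Corollary \ref{corol:logarithm}, since Yu's Thinning Lemma (Theorem \ref{thm:Yu}) gives $H(T_{1-t/T}(X)|\pi_{(1-t/T)T})\le H(X|\pi_T)<\infty$.

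Since $\Psi\ge0$ (by convexity of $\phi$ and \eqref{eq:calculation}) and $0\le\Psi(\cdots)\le g\in L^1(\pi_t)$ uniformly in $k$, the family is uniformly integrable with respect to $\pi_t$. The case $t=0$ needs a separate remark: $\pi_0=\delta_0$ and $T-t=T$, so we only need $g(0)<\infty$, which is the same computation.

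The main obstacle is that $f_k$ is not a probability density. We must check that \eqref{eq:calculation_entropy} really uses no normalisation; the log-sum inequality is homogeneous, so this should be fine. We must also watch the sign of the coefficient $-(\log(T-t)+1)$. Bounding it by its absolute value times a nonnegative term resolves this, at the cost only of a $t$-dependent constant $C_t$.
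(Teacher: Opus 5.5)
Your proposal is correct and follows the paper's proof essentially step for step. Like the paper, you apply the pointwise bound \eqref{eq:calculation_entropy} to $f_k$ (the paper also bounds the sign-indefinite coefficient by an absolute value, via the triangle inequality), use $0\le f_k\le f+1$ to obtain a $k$-independent majorant, and verify its $\pi_t$-integrability through the semigroup property, Yu's Thinning Lemma (Theorem \ref{thm:Yu}) and Corollary \ref{corol:logarithm}; your explicit remarks that \eqref{eq:calculation_entropy} uses no normalisation and that $\Psi\ge 0$ only make explicit what the paper leaves implicit.
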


\begin{proof}
Let $a=\frac{t}{T}<1.$ By \eqref{eq:calculation_entropy}, we have
\begin{align*}
\Psi \big( P_{T(1-a) }f(z), D P_{T(1-a) }f(z) \big)
&\le P_{T(1-a)}f(z) \notag \\
&\quad + \frac{1}{T(1-a)} 
\sum_{n \in \N} n \log (n) f(z+n) \pi_{T(1-a)}(n) \notag \\
&\quad - \big( \log ( T(1-a) ) + 1\big) P_{T(1-a)}f(z+1) \\
&\le P_{T(1-a)}f(z) \notag \\
&\quad + \frac{1}{T(1-a)} 
\sum_{n \in \N} n \log (n) f(z+n) \pi_{T(1-a)}(n) \notag \\
&\quad + \big( \big| \log ( T(1-a) ) \big| + 1\big) P_{T(1-a)}f(z+1) ,
\end{align*}
where in the last inequality we used triangle inequality. Apply the above formula for $f_{k}$ and use the bound $0 \le f_{k} \le f + 1,$ to obtain
\begin{align*}
\Psi \big( P_{T(1-a) }f_{k}(z), D P_{T(1-a) }f_{k}(z) \big)
&\le \big( P_{T(1-a)}f(z) + 1 \big) \\
&\quad + \frac{1}{T(1-a)}
\sum_{n \in \N} n \log (n) \big( f(z+n) + 1 \big) \pi_{T(1-a)}(n) \\
&\quad + \big( \big| \log ( T(1-a) ) \big| + 1\big)
\big( P_{T(1-a)}f(z + 1) + 1 \big).
\end{align*}
The function on the right hand side is $\pi_{aT}$--integrable. Indeed, by the Thinning Lemma (Theorem \ref{thm:Yu}), $H(T_{1-a}(X) | \pi_{(1-a)T} ) <\infty,$ which due to Corollary \ref{corol:logarithm} implies that both $\E X$ and $\E T_{1-a}(X) \log T_{1-a}(X) $ are finite. Finally, the term $\E_{\pi_{T(1-a)}} (g) $ is also finite for $g(n)= n \log(n).$
\end{proof}

\begin{corollary}
    \label{corollary:last_one}
    Let $X$ have density $f$ with respect to $\pi_{T}.$ If $H(X|\pi_{T}) < \infty$ and $T=\E X,$ then
    \begin{displaymath}
        H \big( T_{a} ( S(X)) | T_{a}(X) \big) = \E_{\pi_{aT}} \Psi \big( P_{T(1-a)}f , DP_{T(1-a)}f \big) < \infty
    \end{displaymath}
    for all $a < 1.$ Moreover, for any $a < 1 $
    \begin{displaymath}
        \Big(  \Psi \big( P_{T(1-b)} f , DP_{T(1-b)} f \big) \Big)_{0 \le b \le a} 
    \end{displaymath}
    is uniformly integrable with respect to the corresponding family of measures $(\pi_{b})_{0 \le b \le a}.$
\end{corollary}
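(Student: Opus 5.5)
The identity comes from the computation in the proof of Corollary \ref{corol:derivative}, which used only the laws of $T_a(X)$ and $T_a(S(X))$ and no boundedness of $f$. The finiteness comes from Proposition \ref{prop:appendix_bound1}, and uniform integrability comes from the Jensen argument of Corollary \ref{corollary:UI}, rescaled to the thinning parametrization.

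\textbf{The identity.} By Lemma \ref{lem:law_siaze_bias}, $S(X)\sim f(\cdot+1)\,\dd\pi_T$. By Lemma \ref{lem:law}, $T_a(S(X))\sim P_{T(1-a)}f(\cdot+1)\,\dd\pi_{aT}$ and $T_a(X)\sim P_{T(1-a)}f\,\dd\pi_{aT}$. The density of the first law with respect to the second is therefore the ratio $P_{T(1-a)}f(k+1)/P_{T(1-a)}f(k)$. Writing the relative entropy as $\sum \phi(\text{ratio})\,P_{T(1-a)}f\,\pi_{aT}$ uses the vanishing of $\sum(\text{ratio}-1)P_{T(1-a)}f\,\pi_{aT}$. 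This sum equals $P_Tf(1)-1$ by the semigroup property, and $P_Tf(1)=\E X/T=1$ because $T=\E X$. Identity \eqref{eq:calculation} then turns each summand into $\Psi(P_{T(1-a)}f,DP_{T(1-a)}f)$. Everything here is a sum of nonnegative terms, since $\phi\ge0$. Where $P_{T(1-a)}f(k)=0$, the same holds at all larger $k$ if $f$ vanishes on a tail; I will use the conventions $0/0=0$ and $0\log0=0$ there. So no integrability is needed beyond finiteness of the constant term.

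\textbf{Finiteness.} I apply Proposition \ref{prop:appendix_bound1}. The Thinning Lemma (Theorem \ref{thm:Yu}) gives $H(T_{1-a}(X)|\pi_{(1-a)T})\le(1-a)H(X|\pi_T)<\infty$. Hence $\E_{\pi_{aT}}\Psi(P_{T(1-a)}f,DP_{T(1-a)}f)<\infty$ for every $a<1$, as bounded in \eqref{eq:calculation_expectation_entropy}.

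\textbf{Uniform integrability.} This step needs the most care, because the measure family is indexed by $b$. I read it as the family of measures $(\pi_{bT})_{0\le b\le a}$ and reduce to Corollary \ref{corollary:UI} by setting $s=bT\le aT=:t<T$. Concretely, $\Psi$ is convex and $1$-homogeneous, and $D$ commutes with $P$ by \eqref{eq:heateq}. Jensen then gives
\[
\Psi(P_{T(1-b)}f,DP_{T(1-b)}f)\le P_{(a-b)T}\Psi(P_{T(1-a)}f,DP_{T(1-a)}f).
\]
For each $M>0$, convexity of $x\mapsto(x-M)_+$ and $\pi_{bT}P_{(a-b)T}=\pi_{aT}$ give
\[
\sup_{0\le b\le a}\E_{\pi_{bT}}\big(\Psi(P_{T(1-b)}f,DP_{T(1-b)}f)-M\big)_+\le\E_{\pi_{aT}}\big(\Psi(P_{T(1-a)}f,DP_{T(1-a)}f)-M\big)_+ .
\]
The right side tends to $0$ as $M\to\infty$ by the finiteness just established, which proves the claim.

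The main obstacle is justifying Jensen for the semigroup on the possibly non-open domain where $P_{T-t}f$ vanishes. I handle it using lower semicontinuity and convexity of $\Psi$ extended by $\Psi(0,0)=0$.
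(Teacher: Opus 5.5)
Your proposal is correct and follows the paper's proof essentially step for step: the identity comes from the laws of $T_a(S(X))$ and $T_a(X)$ together with $P_Tf(1)=P_Tf(0)=1$, finiteness comes from Proposition \ref{prop:appendix_bound1} combined with the Thinning Lemma (Theorem \ref{thm:Yu}), and uniform integrability comes from the Jensen argument of Corollary \ref{corollary:UI} with $s=bT$, $t=aT$. Your reading of the measures as $\pi_{bT}$ rather than the literal $\pi_b$, and your care with the set where $P_{T(1-a)}f$ vanishes (where the closed convex extension of $\Psi$ keeps Jensen valid), are sensible refinements that the paper leaves implicit.
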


\begin{proof}
We adapt the argument used in Proposition \ref{prop:appendix_bound1}. By the same calculation as in \eqref{eq:psi}, we have 
\begin{align}
\label{eq:psi_2}
      \Psi \big( P_{T(1-a)}f(k), D P_{T(1-a)}f(k) \big)
      &= P_{T(1-a)}f(k+1)
      \log \Bigg( \frac{P_{T(1-a)}f(k+1)}{P_{T(1-a)}f(k)} \Bigg) \notag \\
      &\quad - D P_{T(1-a)}f(k).
\end{align}
We note that the second term vanishes after taking expectation with respect to $\pi_{aT}.$ Indeed, $P_{T}f(1) = 1$ whenever $T=\E X.$ Consequently,
\begin{align*}
       \E_{\pi_{aT}} \big( D P_{T(1-a)} f \big)
        &= P_{aT} \big( P_{T(1-a)}f \big) (1) - P_{aT} \big( P_{T(1-a)}f \big) (0) \\
        &= P_{T} f(1) - P_{T} f(0) = 0.
\end{align*}
By Lemma \ref{lem:law_siaze_bias}, we have $S(X) \sim f( \cdot +1) \dd \pi_{T}$ and by Lemma \ref{lem:law}, we get $T_{a}(X) \sim P_{T(1-a)} f \dd \pi_{aT}$ and $T_{a}(S(X)) \sim P_{T(1-a)} f(\cdot + 1) \dd \pi_{aT}.$ Thus, it follows 
\begin{displaymath}
    T_{a}(S(X))\sim P_{T(1-a)}f (\cdot+1) \dd \pi_{aT} = \frac{P_{T(1-a)}f (\cdot+1)}{P_{T(1-a)}f } P_{T(1-a)}f \dd \pi_{aT} .
\end{displaymath}
By taking expectation with respect to $\pi_{aT}$ in \eqref{eq:psi_2}, we obtain
\begin{displaymath}
    H \big( T_{a} ( S(X)) | T_{a}(X) \big) = \E_{\pi_{aT}} \Psi \big( P_{T(1-a)}f , DP_{T(1-a)}f \big).
\end{displaymath}
The finiteness part follows from Proposition \ref{prop:appendix_bound1} and the moreover part follows by Corollary \ref{corollary:UI}.
\end{proof}

\begin{remark}
    \label{rem:remove_assumption}
    The following entropy representation formula stated in Proposition \ref{proposition:representation_ARS} for density functions is due to Aryan, Rivera, and Shenfeld; see \cite[Proposition 4.1]{ARS}. They prove the result under the additional assumption 
    \begin{displaymath}
        \E_{\pi_{T}} \Psi \big( f, Df \big) < \infty ,
    \end{displaymath}
    which is natural to assume in the context of equality cases of Wu's inequality. Adapting their proof to density functions, this assumption is used only to guarantee that the integrand on the right-hand side of \eqref{eq:variational_ARS} is always finite for $t < T.$ Indeed, using Jensen's inequality they establish the estimate
    \begin{displaymath}
        \E_{\pi_{t}} \Psi \big( P_{T - t} f , DP_{T - t} f \big)  < \infty.
    \end{displaymath}
    Proposition \ref{prop:appendix_bound1} allows us to remove this additional assumption. This extension is useful for our purposes, since the condition $\E_{\pi_{T}} \Psi \big( f, Df \big) < \infty$  is not natural in the study of equality cases for the Thinning Lemma; see Subsection \ref{sss:equality_cases}. 
\end{remark}

\begin{proposition}
     [Aryan, L\'opez-Rivera and Shenfeld \cite{ARS}]
    \label{proposition:representation_ARS}
    Let $X$ have density $f$ with respect to $\pi_{T}$ and $f: \N \rightarrow (0, \infty).$ If $H(X|\pi_{T}) < \infty,$ then
    \begin{equation}
        \label{eq:variational_ARS}
        H( X | \pi_{T} ) = \int_{0}^{T} \E_{\pi_{t}} \Psi \big( P_{T - t} f , DP_{T - t} f \big) \dd t.
    \end{equation}   
\end{proposition}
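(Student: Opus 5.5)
The plan is to prove the representation first for densities bounded away from zero and infinity, using the Poisson--F\"ollmer process, and then pass to general $f$ by truncation.

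\textbf{Step 1: the regular case.} Assume first that $\frac1k \le f \le k$. Then \eqref{eq:variational} and \eqref{eq:minimizer} give
\begin{displaymath}
H(X|\pi_T)=\int_0^T \E\phi(\lambda_t)\,\dd t .
\end{displaymath}
By Lemma \ref{lem:process}(1) we have $X_t\sim P_{T-t}f\,\dd\pi_t$. Moreover $\lambda_t$ is the ratio $P_{T-t}f(X_t+1)/P_{T-t}f(X_t)$, by \eqref{eq:minimizer}. Using the identity \eqref{eq:calculation}, this yields
\begin{displaymath}
\E\phi(\lambda_t)=\sum_k \phi\Big(\frac{P_{T-t}f(k+1)}{P_{T-t}f(k)}\Big)P_{T-t}f(k)\pi_t(k)=\E_{\pi_t}\Psi(P_{T-t}f,DP_{T-t}f).
\end{displaymath}
Fubini (the integrand is nonnegative) then gives \eqref{eq:variational_ARS} in this case.

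\textbf{Step 2: truncation and the inequality ``$\le$''.} For general $f$ with $H(X|\pi_T)<\infty$, take the truncations $f_k$ from \eqref{eq:densities}, normalized by $Z_k=\int f_k\,\dd\pi_T\to1$. Since $\Psi$ is 1-homogeneous, Step 1 gives
\begin{displaymath}
Z_kH(X_k|\pi_T)=\int_0^T\E_{\pi_t}\Psi(P_{T-t}f_k,DP_{T-t}f_k)\,\dd t .
\end{displaymath}
The left side converges to $H(X|\pi_T)$ by dominated convergence, as in the proof of Theorem \ref{thm:Yu}. For each $t<T$, $P_{T-t}f_k\to P_{T-t}f$ pointwise. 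By Corollary \ref{prop:appendix_bound2} the family $\Psi(P_{T-t}f_k,DP_{T-t}f_k)$ is uniformly integrable w.r.t.\ $\pi_t$, so Vitali gives convergence of the inner expectations for each fixed $t<T$. Since $\Psi\ge0$, Fatou in $t$ yields $\int_0^T\E_{\pi_t}\Psi(P_{T-t}f,DP_{T-t}f)\,\dd t\le H(X|\pi_T)$.

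\textbf{Step 3: the inequality ``$\ge$'', which is the main obstacle.} Fatou in $t$ only controls the integral from above, and near $t=T$ there is no uniform integrability, so a different argument is needed. Here I would use monotonicity. By convexity of $\Psi$ and Jensen, exactly as in the proof of the equality cases, the function $g_k(t)=\E_{\pi_t}\Psi(P_{T-t}f_k,DP_{T-t}f_k)$ is nondecreasing in $t$, and the same holds for the limit $g(t)$. The proof then proceeds in three moves.
\begin{enumerate}
\item Fix $s<T$ and split the integral at $s$. On $[0,s]$ use dominated convergence; the domination comes from the monotone bound $g_k(t)\le g_k(s)$ together with convergence of $g_k(s)$.
\item For the tail $[s,T]$, apply Step 1 to the thinned variable. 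By Lemma \ref{lem:law} and the semigroup property, $\int_0^s g_k(t)\,\dd t = Z_kH(T_{s/T}(X_k)|\pi_s)$.
\item Hence $\int_s^T g_k = Z_k\big[H(X_k|\pi_T)-H(T_{s/T}(X_k)|\pi_s)\big]$, and the right side converges to $H(X|\pi_T)-H(T_{s/T}(X)|\pi_s)$.
\end{enumerate}

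The last convergence needs the full limit $H(T_{s/T}(X_k)|\pi_s)\to H(T_{s/T}(X)|\pi_s)$, not just lower semicontinuity. I would obtain it by dominated convergence applied to the entropy sums. The required domination comes from the log-sum estimate \eqref{eq:calculation_entropy} and Corollary \ref{corol:logarithm}.

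Combining the two pieces, $\int_0^s g = H(T_{s/T}(X)|\pi_s)$ for every $s<T$. The final step is to let $s\to T$: by monotone convergence the left side tends to $\int_0^T g$. For the right side, $\lim_{s\to T}H(T_{s/T}(X)|\pi_s)=H(X|\pi_T)$. Lower semicontinuity (since $T_{s/T}(X)\to X$ weakly) gives $\liminf$ at least $H(X|\pi_T)$, while Theorem \ref{thm:Yu} gives $\limsup$ at most $H(X|\pi_T)$. This gives the reverse inequality and closes the argument.
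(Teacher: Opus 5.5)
Your argument is correct, but it follows a different route from the paper. The paper gives no self-contained proof. It relies on the argument of \cite[Proposition 4.1]{ARS} and only observes (Remark \ref{rem:remove_assumption}) that ARS's extra hypothesis $\E_{\pi_T}\Psi(f,Df)<\infty$ serves solely to make $\E_{\pi_t}\Psi(P_{T-t}f,DP_{T-t}f)$ finite for $t<T$. Proposition \ref{prop:appendix_bound1} now provides that finiteness from $H(X|\pi_T)<\infty$ alone, and everything else, including the behaviour at $t=T$, is inherited from ARS.

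You instead build the identity from the paper's own tools:
\begin{itemize}
\item the Poisson--F\"ollmer representation for bounded densities, followed by truncation;
\item the uniform integrability of Corollary \ref{prop:appendix_bound2}, which is a stronger input than the pointwise finiteness the paper needs;
\item monotonicity of $g_k$ in $t$, to dominate on $[0,s]$;
\item the key observation that $\int_0^s g_k\,\dd t=Z_kH(T_{s/T}(X_k)|\pi_s)$.
\end{itemize}
That last observation trades the uncontrolled endpoint $t=T$ for continuity of $s\mapsto H(T_{s/T}(X)|\pi_s)$ at $s=T$. You obtain this continuity correctly from lower semicontinuity together with Theorem \ref{thm:Yu}. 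There is no circularity, since neither Yu's lemma nor the appendix results use this proposition.

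What your route buys is self-containedness. It also gives, as a by-product, the identity $H(T_{s/T}(X)|\pi_s)=\int_0^s\E_{\pi_t}\Psi(P_{T-t}f,DP_{T-t}f)\,\dd t$ for general $f$. That is exactly what the equality-case proof later extracts from this proposition. The paper's route is shorter, but it asks the reader to trust that ARS's proof adapts to densities.

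Minor points:
\begin{itemize}
\item Step 2 and the tail identity in Move 3 are superfluous. Moves 1--2, together with convergence of the thinned entropies, already give $\int_0^s g\,\dd t=H(T_{s/T}(X)|\pi_s)$, and letting $s\to T$ yields equality directly.
\item For that convergence, the natural domination is Jensen (the log-sum inequality with weights $\pi_{T-s}$): $P_{T-s}f_k\log P_{T-s}f_k\le P_{T-s}(f_k\log f_k)\le P_{T-s}\big((f\log f)_+\big)$, which is $\pi_s$-integrable, while negative parts are bounded by $1/e$. The estimate \eqref{eq:calculation_entropy} controls $\Psi$, not $x\log x$, so it is not the right reference there.
\item Because the reference measure $\pi_s$ also moves, the lower semicontinuity you invoke must be joint in both arguments. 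This holds by the Donsker--Varadhan formula.
\end{itemize}
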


\end{document}